\numberwithin{equation}{section}
\numberwithin{figure}{section}
\theoremstyle{plain}
\newtheorem{thm}{\protect\theoremname}[section]
\theoremstyle{plain}
\newtheorem{prop}[thm]{\protect\propositionname}
\theoremstyle{remark}
\newtheorem{rem}[thm]{\protect\remarkname}
\theoremstyle{definition}
\newtheorem{condition}[thm]{\protect\conditionname}
\theoremstyle{definition}
\newtheorem{defn}[thm]{\protect\definitionname}
\theoremstyle{plain}
\newtheorem{lem}[thm]{\protect\lemmaname}
\theoremstyle{remark}
\newtheorem*{acknowledgement*}{\protect\acknowledgementname}
\setlist[itemize]{leftmargin=*}
\setlist[enumerate]{leftmargin=*}
\DeclareFontFamily{U}{matha}{\hyphenchar\font45}
\DeclareFontShape{U}{matha}{m}{n}{
      <5> <6> <7> <8> <9> <10> gen * matha
      <10.95> matha10 <12> <14.4> <17.28> <20.74> <24.88> matha12
      }{}
\DeclareSymbolFont{matha}{U}{matha}{m}{n}
\DeclareFontFamily{U}{mathx}{\hyphenchar\font45}
\DeclareFontShape{U}{mathx}{m}{n}{
      <5> <6> <7> <8> <9> <10>
      <10.95> <12> <14.4> <17.28> <20.74> <24.88>
      mathx10
      }{}
\DeclareSymbolFont{mathx}{U}{mathx}{m}{n}
\DeclareMathDelimiter{\vvvert}{0}{matha}{"7E}{mathx}{"17}
\DeclareMathAlphabet{\scal}{U}{dutchcal}{m}{n}
\def\th@plain{\thm@notefont{}\itshape}
\def\th@definition{\thm@notefont{}\normalfont}
\providecommand{\acknowledgementname}{Acknowledgement}
\providecommand{\conditionname}{Condition}
\providecommand{\definitionname}{Definition}
\providecommand{\lemmaname}{Lemma}
\providecommand{\propositionname}{Proposition}
\providecommand{\remarkname}{Remark}
\providecommand{\theoremname}{Theorem}
\begin{document}
\title{Second Time Scale of the Metastability of Reversible Inclusion Processes}
\author{Seonwoo Kim}
\address{S. Kim. Department of Mathematical Sciences, Seoul National University,
Republic of Korea.}
\email{ksw6leta@snu.ac.kr}
\begin{abstract}
We investigate the \emph{second time scale} of the metastable behavior
of the reversible inclusion process in an extension of the study by
{[}Bianchi, Dommers, and Giardin\`{a}, \textit{Electronic Journal
of Probability}, 22:1--34, 2017{]}, which presented the first time
scale of the same model and conjectured the scheme of multiple time
scales. We show that $N/d_{N}^{2}$ is indeed the correct second time
scale for the most general class of reversible inclusion processes,
and thus prove the first conjecture of the foresaid study. Here, $N$
denotes the number of particles, and $d_{N}$ denotes the small scale
of randomness of the system. The main obstacles of this research arise
in \emph{calculating the sharp asymptotics for the capacities}, and
in the fact that the methods employed in the former study are not
directly applicable due to the complex geometry of particle configurations.
To overcome these problems, we first \emph{thoroughly examine the
landscape of the transition rates} to obtain a proper test function
of the equilibrium potential, which provides the upper bound for the
capacities. Then, we \emph{modify the induced test flow and precisely
estimate the equilibrium potential near the metastable valleys} to
obtain the correct lower bound for the capacities. 
\end{abstract}

\keywords{Metastability, multiple time scales, interacting particle systems,
inclusion process.}
\maketitle

\section{Introduction}

An interacting particle system was introduced in \cite{GKR,GKRV}
as a dual process of a certain class of energy diffusion models, known
as Brownian momentum (energy) processes. In \cite{GRV}, this process
was first named as a \emph{(symmetric) inclusion process}, which was
treated as a bosonic\footnote{Bosonic particle systems represent dynamics in which particles tend
to attract each other. They are mostly used to represent dynamical
systems in low temperatures.} counterpart of the well-known exclusion process. Since then, this
particular random system has gathered the interest of numerous researchers.
A general overview on the study of inclusion processes is provided
in \cite[Chapters 2 and 6]{Chleboun}.

\emph{Condensation} takes place in various particle systems that exhibit
attractive interactions. It is defined by the situation in which a
significant portion of particles in the system becomes concentrated
at a single site (cf. \eqref{condensation}), due to the bosonic interactions
among them. This phenomenon has been a consistently popular research
subject during the past few decades. Condensation of inclusion processes
was first studied in \cite{GroReVa 11}, where the authors presented
the unique invariant measure of the system under some restrictions,
together with the condensation result of particles in the dynamics.
Since then, a variety of results were reported on condensation of
inclusion processes under various conditions and geometries of the
system \cite{ACR,Chleboun,KR,OpokuRedig}.

\emph{Metastability} represents the macroscopic phenomenon that occurs
when certain observables in a system linger in one state for an extended
period of time and at a random moment later evolve to another state
within a relatively short time. In the context of particle systems,
this is described as follows: After condensation occurs, the \emph{condensate}
of particles remains at its site for a relatively long time. However,
on appropriately long time scales, it tends to move to another site
within the system. This behavior, also referred to as \emph{tunneling},
can be characterized by a suitable random walk of the condensate on
the collection of sites. In the context of inclusion processes, this
phenomenon was first characterized in \cite{GroReVa 13}, where the
authors showed the asymptotic behavior of formation and evolution
of the condensate. However, this striking result was obtained only
for \emph{symmetric} inclusion processes. Accordingly, the next objective
was to find a similar result for a more general class of inclusion
processes. This project has been steadily maturing over the past few
years. \cite{BDG} reported a result on the metastable behavior for
reversible inclusion processes. Moreover, in \cite{KS NRIP}, Seo
and the author of the current paper worked on the setting of general
non-reversible inclusion processes.

The metastable behavior of reversible inclusion processes is subjected
to the scheme of \emph{multiple time scales}, which was studied thoroughly
in \cite{BL rev-MP}. For completeness, we briefly recall the result
from \cite[Theorem 2.3]{BDG}, stating that metastable behavior exists
among certain sites $S_{\star}$ (cf. Proposition \ref{conden}) in
the first time scale $1/d_{N}$, where $d_{N}$ denotes the control
factor of randomness of the dynamics which vanishes as the number
of particles $N$ tends to infinity. We must emphasize that the limiting
metastable dynamics on $S_{\star}$ \emph{may not be irreducible},
in contrast to the original underlying random walk. Because the original
process is irreducible, it is expected that all metastable states
are eventually achievable. Hence, \emph{the system is likely to exhibit
completely novel metastable movements at longer time scales.} The
authors of \cite{BDG} conjectured two additional time scales, $N/d_{N}^{2}$
and $N^{2}/d_{N}^{3}$, by proving the existence of such time scales
in a simple one-dimensional setting. Moreover, they showed that $N^{2}/d_{N}^{3}$
represents the terminal level of metastability, in the sense that
there are no time scales larger than $N^{2}/d_{N}^{3}$ in which metastable
movements occur.

\emph{In this study, we extensively generalize the metastable result
of reversible inclusion processes in \cite{BDG}, and }\textbf{\emph{we
fully characterize the metastable behavior in the second time scale,
$\theta_{N,2}=N/d_{N}^{2}$.}} Specifically, we prove that the conjectured
second time scale $N/d_{N}^{2}$ is indeed the correct one for the
most general class of reversible underlying random walks, and that
there are no intermediate time scales between $1/d_{N}$ and $N/d_{N}^{2}$.
This leads us to a \textbf{\emph{complete analysis of the metastability
of reversible inclusion processes up to the second level on the scheme
of multiple time scales.}}

In this research, we encountered \emph{two mathematical obstacles}.
The first obstacle concerns with the \emph{investigation of the sharp
asymptotics of the equilibrium potential}, which is the main ingredient
to apply the Dirichlet principle. To overcome this issue, we first
analyze the simplest case (Condition \ref{splcond}) and \emph{carefully
examine the Dirichlet form of the dynamics to find a proper test function
(Subsection \ref{ss-testfcndefspl}).} This test function naturally
induces a test flow (\eqref{fcnflow} and Proposition \ref{flowDiri}).
The second obstacle concerns with the \emph{control of the major and
minor parts of the divergence of this test flow}. This is essential
to apply Theorem \ref{genThom}, which originates from \cite{Seo NRZRP}.
We deal with the minor part in Subsection \ref{ss-testflowdefspl}
by \emph{replacing the test flow by its asymptotic limit}. Subsequently,
we address the major part in Subsections \ref{ss-flownormspl} and
\ref{ss-remflowspl}, using the fact that \emph{the equilibrium potential
behaves well near metastable valleys (cf. Lemma \ref{eqpotest}).}
After settling these problems for the simple case, we address the
general model by applying a similar method to obtain the main theorem.
A more detailed explanation of the procedure is provided in Remark
\ref{obstrmk} and Section \ref{s-outline}.

Moreover, we strongly agree with the other conjecture in \cite{BDG},
that $N^{2}/d_{N}^{3}$ is indeed the third time scale of this process,
and that the given three time scales completely characterize the metastable
behavior, indicating that there are no additional time scales in metastable
movements. However, it remains the possibility that an intermediate
step of metastable behavior emerges between them. Investigating the
third time scale is out of the scope of the current machineries developed
in this study (Remark \ref{mainrmk}(\ref{tshard})). Hence, this
topic serves as the main objective of future research in this direction.

Notably, the degree of $d_{N}$ increases by $1$ in the consecutive
time scales; $1/d_{N}\to N/d_{N}^{2}\to N^{2}/d_{N}^{3}$. This is
attributed to the fundamental property of transition rates of the
inclusion process. According to \eqref{inclusion}, the process has
to wait a time of order $1/d_{N}$ to send a particle to an empty
site. As long as a site is occupied with at least one particle, it
requires roughly constant-scale time to send the rest of the particles
there. Hence, the degrees of $d_{N}$ in the time scales represent
the graph distance (see footnote \ref{fngd}) between the corresponding
metastable states. Evidently, the scale grows as the distance increases.
This serves as a milestone in constructing the exact test function
representing the equilibrium potential (see Subsections \ref{ss-testfcndefspl}
and \ref{ss-testfcndef} for further detail).

In contrast, the metastability of non-reversible inclusion processes
occurs in an entirely different manner. It has been established in
\cite{KS NRIP} that there are two types of first time scales in the
system, namely $1/d_{N}$ if the limit dynamics is symmetric (and
thus generalizes the reversible case), and $1/(d_{N}N)$ if it is
asymmetric. Succeeding time scales are entirely unidentified and deserve
extensive future research. Further information is provided in \cite[Theorems 3.10 and 3.12]{KS NRIP}.

Metastability of inclusion processes is often compared to that of
the well-known \emph{(supercritical and critical) zero-range processes},
as they both involve bosonic particle systems representing stickiness
in low temperature. Moreover, both metastable behaviors can be proven
by a similar series of techniques, known as the martingale approach
\cite{BL TM,BL TM-nrv}. The main difference between them is that
unlike the inclusion process, the zero-range process exhibits single-step
metastable behavior; hence, there is only one time scale. This is
because particle movements in the zero-range process are affected
only by the number of particles on the starting site, which is the
reason behind the naming ``zero-range.'' Hence, on a suitable time
scale, all metastable states are reachable simultaneously. Full details
on the recent results on condensation and metastability of zero-range
processes are provided in \cite{AGL,BL ZRP,Landim TAZRP,Seo NRZRP}.

We assume throughout this article that the number of sites is fixed,
and the number of particles diverges. Alternatively, a model, named
the inclusion process in the \emph{thermodynamic limit} assumes that
the number of sites tends to infinity along with the number of particles,
so that particle density converges to a certain target density $\rho>0$.
In this case, yet another type of condensation and metastability is
detected. \cite{CCG} provides formulation and computational data,
while \cite{JCG} reports various condensation results depending on
the behavior of $d_{N}$, and \cite[Theorems 3.21, 3.22, and 3.23]{KS NRIP}
present the general result of metastability on the torus.

The main ingredients of the proof of our main result are the \emph{potential
theory} \cite{BdH} and the \emph{martingale approach} \cite{BL TM,BL TM-nrv}.
Compared to the classical pathwise approach to metastability, the
potential-theoretic approach has the big advantage of being highly
useful in calculating the sharp asymptotics of the mean hitting time
between metastable states and the consecutive metastable movements
among the sites in the limit. Based on this technology, Beltrán and
Landim proposed an outstanding method of calculating the mean transition
rates of the trace process by precisely estimating the corresponding
capacities of the system. We explain these methodologies in more detail
in Section \ref{s-outline}.

\section{Notation and Main Results}

We first settle the basic notation in this article. 
\begin{itemize}
\item The set of natural numbers, $\mathbb{N}$, includes $0$, i.e., $\mathbb{N}=\{n\in\mathbb{Z}:n\ge0\}$. 
\item Writing $\alpha,\,\beta\in\mathcal{R}$ or $\{\alpha,\,\beta\}\subseteq\mathcal{R}$
implies that $\alpha$ and $\beta$ are different. 
\item For integers $a$ and $b$ with $a\le b$, $\llbracket a,\,b\rrbracket$
represents $[a,\,b]\cap\mathbb{Z}$, i.e., the set of integers from
$a$ to $b$. 
\item For two sequences $\{\alpha_{N}\}_{N\ge1}$ and $\{\beta_{N}\}_{N\ge1}$
of real numbers, $\alpha_{N}$ and $\beta_{N}$ are \emph{asymptotically
equal}, or $\alpha_{N}\simeq\beta_{N}$ if $\lim_{N\to\infty}\alpha_{N}/\beta_{N}=1$. 
\item In what follows, $C$ denotes a global positive constant which may
vary among equations. 
\item For functions $f$ and $g$ of $N$, we write $f(N)=O(g(N))$ if there
exists a constant $C$ with the property that $|f(N)|\le Cg(N)$ for
all $N\ge1$. Moreover, we write $f(N)=o(g(N))$ if $\lim_{N\to\infty}f(N)/g(N)=0$. 
\end{itemize}

\subsection{Reversible inclusion processes}

We fix a finite state space $S$ which represents our collection of
sites. Suppose that $r:S\times S\to[0,\,\infty)$ is a transition
rate function which defines a continuous-time irreducible random walk
on $S$. For convenience, we let $r(x,\,x)=0$ for all $x\in S$.
We further assume that the random walk is \emph{reversible} with respect
to a probability distribution $m$, namely, 
\[
m(x)r(x,\,y)=m(y)r(y,\,x)\text{ for all }x,\,y\in S.
\]
The sites with maximal measure deserve particular attention, as they
are precisely the sites where particles condensate (Proposition \ref{conden}).
We define 
\[
M_{\star}=\max\{m(x):x\in S\},\ S_{\star}=\{x\in S:m(x)=M_{\star}\},\text{ and }m_{\star}(\cdot)=\frac{m(\cdot)}{M_{\star}}.
\]
Notably, $m_{\star}(x)\le1$ for all $x\in S$, and the equality holds
if and only if $x\in S_{\star}$.

Based on the underlying random walk introduced above, we introduce
the inclusion process on $S$. First, the set of configurations corresponding
to the distribution of $N$ particles on $S$ is denoted by 
\[
\mathcal{H}_{N}=\Big\{\eta\in\mathbb{N}^{S}:\sum_{x\in S}\eta_{x}=N\Big\}.
\]
Hence, $\eta_{x}$ is regarded as the number of particles at $x\in S$
of $\eta.$

Now, we define the \emph{inclusion process} to be a continuous-time
Markov chain $\{\eta_{N}(t)\}_{t\ge0}$ on $\mathcal{H}_{N}$ associated
with generator $\mathcal{L}_{N}$ acting on functions $f:\mathcal{H}_{N}\to\mathbb{R}$
by 
\begin{equation}
(\mathcal{L}_{N}f)(\eta)=\sum_{x,\,y\in S}\eta_{x}(d_{N}+\eta_{y})r(x,\,y)\{f(\sigma^{x,\,y}\eta)-f(\eta)\}\text{ for }\eta\in\mathcal{H}_{N}.\label{inclusion}
\end{equation}
Here, $\sigma^{x,\,y}\eta$ is the configuration obtained from $\eta$
by sending a particle, if possible, from $x$ to $y$. Hence, if $\eta_{x}=0$,
then $\sigma^{x,\,y}\eta=\eta$ and if $\eta_{x}\ge1$, then $(\sigma^{x,\,y}\eta)_{x}=\eta_{x}-1$,
$(\sigma^{x,\,y}\eta)_{y}=\eta_{y}+1$, and $(\sigma^{x,\,y}\eta)_{z}=\eta_{z}$
for $z\ne x,\,y$. Moreover, $\{d_{N}\}_{N\ge1}$ is a sequence of
positive real numbers converging to $0$. We will further assume that
$d_{N}$ decays more quickly than the logarithmic scale; 
\begin{equation}
\lim_{N\to\infty}d_{N}\log N=0.\label{dNcond}
\end{equation}
A typical choice for $d_{N}$ in practice is the polynomial scale,
$d_{N}=1/N^{\alpha}$, $\alpha>0$. One can readily verify that $\eta_{N}(\cdot)$
is irreducible. We denote the transition rate of this process by $\mathbf{q}_{N}:\mathcal{H}_{N}\times\mathcal{H}_{N}\to[0,\,\infty)$,
and the law and expectation of the process starting at $\eta$ by
$\mathbb{P}_{\eta}=\mathbb{P}_{\eta}^{N}$ and $\mathbb{E}_{\eta}=\mathbb{E}_{\eta}^{N}$,
respectively.

We conclude this subsection with a brief explanation of the dynamical
characteristics of the inclusion process. Given a configuration $\eta\in\mathcal{H}_{N}$,
a particle moves from site $x$ to site $y$ at rate 
\[
\mathbf{q}_{N}(\eta,\,\sigma^{x,\,y}\eta)=\eta_{x}(d_{N}+\eta_{y})r(x,\,y)=d_{N}\eta_{x}r(x,\,y)+\eta_{x}\eta_{y}r(x,\,y).
\]
Here, $d_{N}\eta_{x}r(x,\,y)$ denotes the \emph{diffusive} part and
$\eta_{x}\eta_{y}r(x,\,y)$ denotes the \emph{inclusive} part of the
dynamics. More specifically, the diffusive part represents the random
walk of each particle with respect to $r(\cdot,\,\cdot)$, which is
controlled by a parameter $d_{N}$. In contrast, the inclusive part
represents the attractive behavior of particles, because the rate
from $x$ to $y$ increases as $\eta_{y}$ increases, and particles
tend to prefer more occupied sites. As $d_{N}$ decays to $0$, the
inclusive behavior is expected to dominate the dynamics. Consequently,
particles are very likely to assemble at a single site, forming a
\emph{condensate} (Proposition \ref{conden}). However, the small
diffusive interactions trigger a long-term evolution of this condensate
among sites, which is referred to as \emph{tunneling} or \emph{metastable}
behavior (Theorem \ref{fts}). Precise interpretation of these concepts
is provided in the following.

\subsection{Condensation of reversible inclusion processes}

Because the process $\eta_{N}(\cdot)$ is irreducible, it exhibits
a unique invariant distribution. We denote the unique distribution
by $\mu_{N}$. The great advantage we gain by assuming reversibility
of the underlying random walk is that $\eta_{N}(\cdot)$ likewise
becomes reversible with respect to $\mu_{N}$, and that $\mu_{N}$
admits an explicit formula. This is stated in the following proposition,
whose proof is straightforward. Hereafter, $\Gamma(\cdot)$ denotes
the typical Gamma function. 
\begin{prop}
\label{mun}The inclusion process $\{\eta_{N}(t)\}_{t\ge0}$ is reversible
with respect to the invariant measure $\mu_{N}$, which satisfies
\begin{equation}
\mu_{N}(\eta)=\frac{1}{Z_{N}}\prod_{x\in S}w_{N}(\eta_{x})m_{\star}(x)^{\eta_{x}}\text{ for }\eta\in\mathcal{H}_{N},\label{muneq1}
\end{equation}
where 
\[
w_{N}(n)=\frac{\Gamma(d_{N}+n)}{n!\Gamma(d_{N})},\ n\in\mathbb{N},\text{ and }Z_{N}=\sum_{\eta\in\mathcal{H}_{N}}\prod_{x\in S}w_{N}(\eta_{x})m_{\star}(x)^{\eta_{x}}.
\]
\end{prop}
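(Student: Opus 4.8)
The plan is to verify the detailed balance equations directly and then invoke irreducibility. Since the generator $\mathcal{L}_{N}$ in \eqref{inclusion} produces only transitions of the form $\eta\to\sigma^{x,y}\eta$, it suffices to check that
\[
\mu_{N}(\eta)\,\mathbf{q}_{N}(\eta,\sigma^{x,y}\eta)=\mu_{N}(\sigma^{x,y}\eta)\,\mathbf{q}_{N}(\sigma^{x,y}\eta,\eta)
\]
for every $\eta\in\mathcal{H}_{N}$ and every ordered pair $x,y\in S$. The case $\eta_{x}=0$ is trivial, as then $\sigma^{x,y}\eta=\eta$ and $\mathbf{q}_{N}(\eta,\sigma^{x,y}\eta)=\eta_{x}(d_{N}+\eta_{y})r(x,y)=0$. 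First I would record that $\mu_{N}$ is a genuine probability measure on $\mathcal{H}_{N}$: irreducibility together with the reversibility identity $m(x)r(x,y)=m(y)r(y,x)$ forces $m(x)>0$, hence $m_{\star}(x)>0$, for every $x\in S$, while $w_{N}(n)=\Gamma(d_{N}+n)/(n!\,\Gamma(d_{N}))>0$ for all $n\in\mathbb{N}$ since $d_{N}>0$; thus $\mu_{N}(\eta)>0$ for all $\eta$ and $Z_{N}\in(0,\infty)$.

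Next, fix $\eta$ with $\eta_{x}\ge1$ and write $\xi=\sigma^{x,y}\eta$, so that $\xi_{x}=\eta_{x}-1$, $\xi_{y}=\eta_{y}+1$, and $\xi_{z}=\eta_{z}$ for $z\ne x,y$. In the ratio $\mu_{N}(\xi)/\mu_{N}(\eta)$ computed from \eqref{muneq1}, every factor of the product cancels except those at $x$ and $y$, so
\[
\frac{\mu_{N}(\xi)}{\mu_{N}(\eta)}=\frac{w_{N}(\eta_{x}-1)}{w_{N}(\eta_{x})}\cdot\frac{w_{N}(\eta_{y}+1)}{w_{N}(\eta_{y})}\cdot\frac{m_{\star}(y)}{m_{\star}(x)}.
\]
Using the functional equation $\Gamma(z+1)=z\,\Gamma(z)$, one obtains $w_{N}(n-1)/w_{N}(n)=n/(d_{N}+n-1)$ and $w_{N}(n+1)/w_{N}(n)=(d_{N}+n)/(n+1)$, whence the right-hand side equals
\[
\frac{\eta_{x}}{d_{N}+\eta_{x}-1}\cdot\frac{d_{N}+\eta_{y}}{\eta_{y}+1}\cdot\frac{m_{\star}(y)}{m_{\star}(x)}.
\]
On the other hand, $\mathbf{q}_{N}(\eta,\xi)=\eta_{x}(d_{N}+\eta_{y})r(x,y)$ and $\mathbf{q}_{N}(\xi,\eta)=\xi_{y}(d_{N}+\xi_{x})r(y,x)=(\eta_{y}+1)(d_{N}+\eta_{x}-1)r(y,x)$. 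Multiplying the two displays, the factors $\eta_{x}$, $d_{N}+\eta_{y}$, $\eta_{y}+1$, and $d_{N}+\eta_{x}-1$ all cancel and we are left with
\[
\frac{\mu_{N}(\xi)\,\mathbf{q}_{N}(\xi,\eta)}{\mu_{N}(\eta)\,\mathbf{q}_{N}(\eta,\xi)}=\frac{m_{\star}(y)\,r(y,x)}{m_{\star}(x)\,r(x,y)}=\frac{m(y)\,r(y,x)}{m(x)\,r(x,y)}=1,
\]
where the middle equality uses $m_{\star}(\cdot)=m(\cdot)/M_{\star}$ and the last is exactly the reversibility of the underlying random walk. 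This establishes the detailed balance equations.

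Finally, summing the detailed balance identity over $x,y\in S$ and over $\eta$ shows that $(\mu_{N}\mathcal{L}_{N})(\zeta)=0$ for every $\zeta\in\mathcal{H}_{N}$, i.e., $\mu_{N}$ is invariant; since $\eta_{N}(\cdot)$ is irreducible on the finite set $\mathcal{H}_{N}$, $\mu_{N}$ is therefore the unique invariant distribution, and the detailed balance equations themselves say that $\eta_{N}(\cdot)$ is reversible with respect to it. I do not anticipate a genuine obstacle; the only step requiring care is the bookkeeping of the ratios $w_{N}(n\pm1)/w_{N}(n)$ through the Gamma recursion and verifying that the resulting $d_{N}$-dependent factors cancel precisely against the diffusive-plus-inclusive structure $\eta_{x}(d_{N}+\eta_{y})r(x,y)$ of the jump rates.
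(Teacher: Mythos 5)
Your verification is correct and is exactly the straightforward detailed-balance check that the paper has in mind (it omits the proof as ``straightforward,'' deferring to \cite{B-D-G}): the Gamma-recursion ratios $w_{N}(n-1)/w_{N}(n)=n/(d_{N}+n-1)$ and $w_{N}(n+1)/w_{N}(n)=(d_{N}+n)/(n+1)$ cancel precisely against the rates $\eta_{x}(d_{N}+\eta_{y})r(x,y)$ and $(\eta_{y}+1)(d_{N}+\eta_{x}-1)r(y,x)$, leaving $m(y)r(y,x)/(m(x)r(x,y))=1$. The only (harmless) point left implicit is the pair $r(x,y)=0$, where reversibility of $m$ forces $r(y,x)=0$ as well, so detailed balance holds trivially there.
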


\begin{rem}
The following asymptotics hold for the functions introduced in Proposition
\ref{mun}: 
\begin{equation}
1\le\frac{(d_{N}+k)w_{N}(k)}{d_{N}}=\frac{(k+1)w_{N}(k+1)}{d_{N}}\le e^{d_{N}\log N},\ k\ge0,\text{ and }\lim_{N\to\infty}\frac{NZ_{N}}{d_{N}}=|S_{\star}|.\label{munprop}
\end{equation}
In particular, $(d_{N}+k)w_{N}(k)=(k+1)w_{N}(k+1)\simeq d_{N}$ by
\eqref{dNcond}, which is uniform in $k\ge0$. These convergence results
are frequently applied in the following. The proofs are provided in
\cite[Lemma 3.1 and Proposition 3.2]{BDG}. 
\end{rem}

Next, we define the metastable valleys of the process. Let

\[
\mathcal{E}_{N}^{x}=\{\xi^{x}\}=\{\eta\in\mathcal{H}_{N}:\eta_{x}=N\}\text{ for }x\in S.
\]
Hence, $\xi^{x}$ represents the configuration where all particles
are concentrated on the site $x$. Each $\mathcal{E}_{N}^{x}$ is
referred to as a \emph{valley} of the system. Moreover, we denote
$\mathcal{E}_{N}(A)=\bigcup_{x\in A}\mathcal{E}_{N}^{x}$ for $A\subseteq S$.
Valleys of further special interest are $\mathcal{E}_{N}^{x}$ for
$x\in S_{\star}$, as explained by the following proposition. The
proof of this is provided in \cite[Proposition 2.1]{BDG}. 
\begin{prop}
\label{conden}For each $x\in S_{\star}$, it holds that 
\begin{equation}
\lim_{N\to\infty}\mu_{N}(\mathcal{E}_{N}^{x})=\frac{1}{|S_{\star}|}.\label{condeneq}
\end{equation}
Consequently, we have $\lim_{N\to\infty}\mu_{N}(\mathcal{H}_{N}\setminus\mathcal{E}_{N}(S_{\star}))=0$. 
\end{prop}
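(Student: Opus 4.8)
The plan is to read the statement directly off the explicit stationary formula \eqref{muneq1} together with the asymptotics gathered in \eqref{munprop}; essentially no new ingredient is needed. First I would note that $\mathcal{E}_N^x=\{\xi_N^x\}$ is a single configuration, so $\mu_N(\mathcal{E}_N^x)=\mu_N(\xi_N^x)$, and since $w_N(0)=\Gamma(d_N)/(0!\,\Gamma(d_N))=1$ the product over $S$ in \eqref{muneq1} collapses to a single nontrivial factor, leaving
\[
\mu_N(\mathcal{E}_N^x)=\mu_N(\xi_N^x)=\frac{1}{Z_N}\,w_N(N)\,m_\star(x)^N .
\]
For $x\in S_\star$ one has $m_\star(x)=1$, so this becomes the clean identity $\mu_N(\mathcal{E}_N^x)=w_N(N)/Z_N$; for $x\notin S_\star$ the surviving factor $m_\star(x)^N$ decays exponentially, which explains why the proposition only concerns $S_\star$.

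Second, I would pin down $w_N(N)$. Iterating the recursion $(d_N+k)w_N(k)=(k+1)w_N(k+1)$ from $k=0$ gives $w_N(N)=\frac{d_N}{N}\prod_{j=1}^{N-1}\big(1+\tfrac{d_N}{j}\big)$, and then $1\le\prod_{j=1}^{N-1}\big(1+\tfrac{d_N}{j}\big)\le\exp\big(d_N\sum_{j=1}^{N-1}\tfrac1j\big)\le\exp\big(d_N(1+\log N)\big)$, which tends to $1$ by \eqref{dNcond}; hence $w_N(N)\simeq d_N/N$. Combining this with the partition-function asymptotic $NZ_N/d_N\to|S_\star|$ from \eqref{munprop} yields $\mu_N(\mathcal{E}_N^x)=w_N(N)/Z_N\to 1/|S_\star|$ for every $x\in S_\star$, which is \eqref{condeneq}. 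The stated consequence is then bookkeeping: the configurations $\xi_N^x$, $x\in S_\star$, are pairwise distinct, so $\mu_N(\mathcal{E}_N(S_\star))=\sum_{x\in S_\star}\mu_N(\mathcal{E}_N^x)\to|S_\star|\cdot|S_\star|^{-1}=1$, whence $\mu_N(\mathcal{H}_N\setminus\mathcal{E}_N(S_\star))\to 0$.

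The only step with genuine content is the partition-function estimate $NZ_N/d_N\to|S_\star|$, which I am treating as given by \eqref{munprop}; were it to be proved from scratch, that is where the real work would lie. The route I would take is to partition $\mathcal{H}_N$ according to the number of particles lying off the site carrying the condensate: relocating each such particle to a previously empty site multiplies the weight in \eqref{muneq1} by a factor of order $d_N$ (coming from $w_N(1)=d_N$, $w_N(2)=\frac{(1+d_N)d_N}{2}$, and so on), while placing the condensate at a non-maximal site $x$ costs the exponentially small weight $m_\star(x)^N$. Thus, after dividing by $d_N/N$, in the limit only the $|S_\star|$ ground configurations $\xi_N^x$ contribute, each of weight $\simeq d_N/N$, giving the value $|S_\star|$. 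Given \eqref{munprop} in hand, I would not anticipate any serious obstacle in the full write-up, as all remaining manipulations are elementary.
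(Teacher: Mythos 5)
Your argument is correct and matches the route the paper intends: the paper gives no proof of its own but cites \cite[Proposition 2.1]{B-D-G}, and your derivation---reading $\mu_{N}(\xi_{N}^{x})=w_{N}(N)\,m_{\star}(x)^{N}/Z_{N}$ directly off \eqref{muneq1}, showing $w_{N}(N)\simeq d_{N}/N$ from the recursion together with \eqref{dNcond}, and invoking $NZ_{N}/d_{N}\rightarrow|S_{\star}|$ from \eqref{munprop}---is exactly the standard computation behind that citation. Your only external input is the partition-function asymptotic in \eqref{munprop}, which the paper itself quotes from the same reference before stating the proposition, so relying on it introduces no circularity or gap relative to the paper's own logic.
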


\begin{rem}
In particular, $\mathcal{E}_{N}^{x}$, $x\in S_{\star}$, are referred
to as \emph{metastable valleys} of the process. 
\end{rem}

For simplicity, we write $\mathcal{E}_{N}^{\star}=\mathcal{E}_{N}(S_{\star})$.
Proposition \ref{conden} implies that the \emph{(static) condensation}
occurs on $S_{\star}$, i.e., 
\begin{equation}
\lim_{N\to\infty}\mu_{N}(\mathcal{E}_{N}^{\star})=1.\label{condensation}
\end{equation}
This fact depends heavily on the explicit formula \eqref{muneq1}.
If the underlying random walk is non-reversible, then the right-hand
side of \eqref{muneq1} is not necessarily the invariant distribution
of the system. In fact, we do not have a closed formula of the invariant
distribution in this case. Thus, even the basic condensation result
on valleys is not a simple issue for non-reversible inclusion processes.
Nevertheless, condensation on $\mathcal{E}_{N}(S)$ \emph{can} be
demonstrated for non-reversible systems by adding a few minor conditions
on $d_{N}$ and $r(\cdot,\,\cdot)$. For a recent result on this topic,
we refer to \cite[Theorem 3.15]{KS NRIP}.

\subsection{\label{ss-fts}First time scale of the metastable behavior of reversible
dynamics}

The first time scale is fully characterized in \cite{BDG}. We recall
the result in this subsection to motivate our main result of this
study. To this end, we must first introduce the trace process.

A non-empty subset $\mathcal{G}$ of $\mathcal{H}_{N}$ is fixed,
and a non-decreasing random variable $T^{\mathcal{G}}$, the local
time in $\mathcal{G}$ of the process, is defined by 
\[
T^{\mathcal{G}}(t)=\int_{0}^{t}\mathbbm{1}\{\eta_{N}(s)\in\mathcal{G}\}ds,\ t\ge0.
\]
Let $S^{\mathcal{G}}$ be its generalized inverse function:
\[
S^{\mathcal{G}}(t)=\sup\{s\ge0:T^{\mathcal{G}}(s)\le t\},\ t\ge0.
\]
Then, the \emph{trace process} $\{\eta_{N}^{\mathcal{G}}(t)\}_{t\ge0}$
on $\mathcal{G}$ is defined by 
\[
\eta_{N}^{\mathcal{G}}(t)=\eta_{N}(S^{\mathcal{G}}(t))\text{ for }t\ge0.
\]
The random time $T^{\mathcal{G}}(t)$ measures the amount of time
up to $t$ that the process spends in $\mathcal{G}$. Hence, the random
function $S^{\mathcal{G}}$ reconstructs the global time of the process,
starting from the local time in $\mathcal{G}$. In this sense, the
trace process $\eta_{N}^{\mathcal{G}}(\cdot)$ on $\mathcal{G}$ is
obtained from the original process $\eta_{N}(\cdot)$ by turning off
the clock whenever it is not in $\mathcal{G}$. Therefore, $\eta_{N}^{\mathcal{G}}(\cdot)$
becomes a continuous-time, irreducible Markov chain on $\mathcal{G}$.
Rigorous proof of this fact can be found in e.g., \cite{BL TM}.

Here, we trace the original process $\eta_{N}(\cdot)$ on $\mathcal{E}_{N}^{\star}$,
where condensation occurs. For simplicity, it is denoted by 
\[
\eta_{N}^{\star}(\cdot)=\eta_{N}^{\mathcal{E}_{N}^{\star}}(\cdot).
\]
As we are concerned only with the superscripts of the sets $\{\mathcal{E}_{N}^{x}:x\in S_{\star}\}$,
we define a projection function $\Psi_{1,\,N}:\mathcal{E}_{N}^{\star}\to S_{\star}$
as 
\[
\Psi_{1,\,N}(\xi^{x})=x\text{ for }x\in S_{\star}.
\]
The symbol $1$ in the subscript of $\Psi_{1,\,N}$ denotes the \emph{first}
time scale of metastability. Using this function, we define a process
$\{X_{N}(t)\}_{t\ge0}$ on $S_{\star}$ by 
\[
X_{N}(t)=\Psi_{1,\,N}(\eta_{N}^{\star}(t))\text{ for }t\ge0.
\]
In general, $X_{N}(\cdot)$ is non-Markovian, as it is merely a process
of labelling of the metastable valleys. However, in the case of inclusion
processes, $X_{N}(\cdot)$ is indeed a Markov process, as $\Psi_{1,\,N}$
is a bijection between $\mathcal{E}_{N}^{\star}$ and $S_{\star}$.

Here, we can formulate the first metastable behavior in terms of the
projected trace process $X_{N}(\cdot).$ Proof of the following theorem
is provided in \cite[Section 4]{BDG}. 
\begin{thm}[First time scale of reversible inclusion processes]
\label{fts} Fix a site $x_{0}\in S_{\star}$ and let $\theta_{N,\,1}=1/d_{N}$.
\begin{enumerate}
\item The law of the rescaled process $\{X_{N}(\theta_{N,\,1}t)\}_{t\ge0}$
starting at $x_{0}$ converges (with respect to the Skorokhod topology)
on the path space $D([0,\,\infty);\,S_{\star})$ to the law of the
Markov process $\{X_{\textup{first}}(t)\}_{t\ge0}$ on $S_{\star}$
starting at $x_{0}$, which is defined by the generator 
\[
(\mathscr{L}_{1}f)(x)=\sum_{y\in S_{\star}}r(x,\,y)\{f(y)-f(x)\},\ x\in S_{\star}\text{ for }f:S_{\star}\to\mathbb{R}.
\]
\item The process spends negligible time outside the metastable valleys,
i.e., for all $t>0$, 
\[
\lim_{N\to\infty}\sup_{\eta\in\mathcal{E}_{N}^{\star}}\mathbb{E}_{\eta}\Big[\int_{0}^{t}\mathbbm{1}\{\eta_{N}(\theta_{N,\,1}s)\notin\mathcal{E}_{N}^{\star}\}ds\Big]=0.
\]
\end{enumerate}
\end{thm}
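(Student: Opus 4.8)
The plan is to follow the potential-theoretic approach. The structural feature of this model that makes the first time scale comparatively tractable is that each metastable valley $\mathcal E_N^x$, $x\in S_\star$, is a single configuration; consequently $\Psi_{1,N}$ is a bijection, the trace process $\eta_N^\star(\cdot)$ on $\mathcal E_N^\star$ is a bona fide continuous-time Markov chain on $|S_\star|$ points, and $X_N(\cdot)=\Psi_{1,N}(\eta_N^\star(\cdot))$ is merely a relabeling of it and hence itself Markov (so that the martingale approach, needed in general when the valleys are not singletons, can be bypassed here). Writing $R_N^\star(\xi_N^x,\xi_N^y)$ for the jump rate of $\eta_N^\star(\cdot)$ from $\xi_N^x$ to $\xi_N^y$, part (1) then reduces to the convergence of generators,
\[
\lim_{N\to\infty}\theta_{N,1}\,R_N^\star(\xi_N^x,\xi_N^y)=r(x,y)\qquad\text{for all }x\ne y\text{ in }S_\star,
\]
together with tightness of $\{X_N(\theta_{N,1}t)\}_{t\ge0}$ in $D([0,\infty);S_\star)$; the latter holds because we are dealing with Markov chains on the fixed finite space $S_\star$ with uniformly bounded, convergent jump rates. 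So the entire content of (1) is the sharp asymptotics of $R_N^\star$.

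To obtain these, I would use potential theory. Since the trace procedure preserves capacities and the trace chain $\eta_N^\star(\cdot)$ is reversible, its conductances are given exactly by the polarization identity
\[
\mu_N(\xi_N^x)\,R_N^\star(\xi_N^x,\xi_N^y)=\tfrac12\big[\capc_N(\mathcal E_N^x,\mathcal E_N^\star\setminus\mathcal E_N^x)+\capc_N(\mathcal E_N^y,\mathcal E_N^\star\setminus\mathcal E_N^y)-\capc_N(\mathcal E_N^x\cup\mathcal E_N^y,\mathcal E_N^\star\setminus(\mathcal E_N^x\cup\mathcal E_N^y))\big],
\]
so it suffices to find the sharp asymptotics of the capacities on the right. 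The decisive observation is that, for $x,y\in S_\star$, the relevant transitions between $\xi_N^x$ and $\xi_N^y$ pass through the one-dimensional \emph{transition tube} of configurations with $N-k$ particles at $x$ and $k$ at $y$, $0\le k\le N$, along which --- since $x,y\in S_\star$ forces $r(x,y)=r(y,x)$ by reversibility --- the particle count at $y$ evolves, to leading order, as a symmetric random walk on $\{0,\dots,N\}$. I would bound $\capc_N(\mathcal E_N^x,\mathcal E_N^\star\setminus\mathcal E_N^x)$ from above via the Dirichlet principle, taking as test function the harmonic function of this walk (essentially $1-k/N$ along each $x$--$y$ tube, suitably extended off the tubes), and from below via the Thomson principle applied to the corresponding unit flow, or, more simply, by deleting all non-tube edges and invoking monotonicity of capacity. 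Using the weight asymptotics \eqref{munprop} together with \eqref{dNcond} (to control the Gamma-function weights uniformly over $0\le k\le N$), both bounds yield $\capc_N(\mathcal E_N^x,\mathcal E_N^\star\setminus\mathcal E_N^x)\simeq\frac{d_N}{|S_\star|}\sum_{y\in S_\star\setminus\{x\}}r(x,y)$, and likewise for the other two capacities. Feeding these into the polarization identity and dividing by $\mu_N(\xi_N^x)\to1/|S_\star|$ (Proposition \ref{conden}) gives $\theta_{N,1}\,R_N^\star(\xi_N^x,\xi_N^y)\to r(x,y)$, which proves (1). (The same value $R_N^\star(\xi_N^x,\xi_N^y)\simeq d_N\,r(x,y)$ also follows probabilistically: out of $\xi_N^x$ a particle makes the diffusive jump to $y$ at rate $\simeq N d_N\,r(x,y)$, and from the resulting configuration $\sigma^{x,y}\xi_N^x$ the process reaches $\xi_N^y$ before any other valley with probability $\simeq1/N$ by a gambler's-ruin estimate for the symmetric walk above.)

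Part (2) is much softer. Changing variables $u=\theta_{N,1}s$,
\[
\mathbb E_\eta\Big[\int_0^t\mathbf 1\{\eta_N(\theta_{N,1}s)\notin\mathcal E_N^\star\}\,ds\Big]=\frac1{\theta_{N,1}}\,\mathbb E_\eta\big[T^{\mathcal H_N\setminus\mathcal E_N^\star}(\theta_{N,1}t)\big],
\]
and since $\mathcal E_N^\star$ has only $|S_\star|$ points it suffices to bound this for $\eta=\xi_N^x$. Using $\mu_N(\xi_N^x)\,\mathbb E_{\xi_N^x}[\,\cdot\,]\le\mathbb E_{\mu_N}[\,\cdot\,]$ and the stationarity of $\mu_N$,
\[
\frac1{\theta_{N,1}}\,\mathbb E_{\xi_N^x}\big[T^{\mathcal H_N\setminus\mathcal E_N^\star}(\theta_{N,1}t)\big]\le\frac{t\,\mu_N(\mathcal H_N\setminus\mathcal E_N^\star)}{\mu_N(\xi_N^x)}\longrightarrow0
\]
by Proposition \ref{conden}, which gives (2).

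The heart of the matter, and the only genuinely delicate step, is the sharp capacity asymptotics of the second paragraph, i.e.\ making the Dirichlet-principle upper bound and the Thomson-principle lower bound agree up to a factor $1+o(1)$. Two points require care: the Gamma-function weights $w_N(k)$ must be controlled uniformly over the entire range $0\le k\le N$, which is exactly what the hypothesis \eqref{dNcond} ($d_N\log N\to0$) provides; and one must verify that the ``leakage'' of the test flow out of the one-dimensional transition tubes --- a particle escaping from $\{x,y\}$ to a third site --- contributes only a lower-order error, which holds because any such move is diffusive and therefore carries the small prefactor $d_N$. For the first time scale the tubes are genuinely one-dimensional and these estimates are routine; the corresponding analysis at the second time scale $N/d_N^2$, where the bottleneck geometry ceases to be one-dimensional, is precisely the difficulty that the body of this paper addresses.
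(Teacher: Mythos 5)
Your route is essentially the one the paper itself relies on: Theorem \ref{fts} is not proved in this paper but quoted from \cite[Section 4]{B-D-G}, and that proof is exactly the reduction you describe --- singleton valleys make the projected trace process Markov, the trace rates are expressed through capacities via the polarization identity (cf.\ \eqref{B-L lem}), the capacities are computed by Dirichlet/tube estimates using \eqref{munprop} and $d_N\log N\to 0$, and part (2) follows from the stationarity bound, which is literally the argument in \eqref{mainpfeq2} used again at the second scale. So the structure is sound, including the lower bound by deleting non-tube edges and the reduction of Skorokhod convergence to convergence of the finitely many jump rates.

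The one step whose stated justification would not survive scrutiny is the control of the off-tube terms in the Dirichlet upper bound. You argue they are negligible ``because any such move is diffusive and therefore carries the small prefactor $d_N$''; but the capacity you must match is itself of order $d_N$, and, for instance, the edge $\xi_N^x\to\sigma^{x,z}\xi_N^x$ with $z\notin S_\star$ has $\mu_N(\xi_N^x)\,\mathbf{q}_N(\xi_N^x,\sigma^{x,z}\xi_N^x)\asymp N d_N\gg d_N$, so any extension that is non-constant across such an edge destroys the bound outright. The actual mechanism is different: the test function must be taken \emph{constant}, equal to its valley value, along the tubes toward $S\setminus S_\star$ near the valleys, and the remaining off-tube contributions are then controlled by the geometric factors $m_\star(z)^{\ell}$ and by the extra powers of $d_N$ in $\mu_N$ for configurations with several occupied sites --- this is precisely the role played by \eqref{testfcn2spl}--\eqref{testfcn4spl} and Lemmas \ref{lem2spl}--\ref{lem4spl} at the second scale, and of the analogous construction in \cite{B-D-G} at the first scale. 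Relatedly, when some $x\in S_\star$ has $r(x,y)=0$ for all $y\in S_\star\setminus\{x\}$ (the situation of Remark \ref{ftsrmk}), your asymptotic formula degenerates and you still need the upper bound $\mathrm{Cap}_N\left(\mathcal{E}_N^{x},\mathcal{E}_N^{\star}\setminus\mathcal{E}_N^{x}\right)=o(d_N)$ from such a construction to conclude that the limiting rate vanishes; the linear-profile test function alone does not produce it. With the extension built this way, the rest of your outline goes through as in the cited proof.
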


\begin{rem}
\label{ftsrmk}In Theorem \ref{fts}, the limiting dynamics $X_{\textup{first}}(\cdot)$
is exactly the underlying random walk restricted to $S_{\star}$.
Here, we must note that even though the underlying system is irreducible,
$X_{\textup{first}}(\cdot)$ can still not be irreducible. For example,
let $S=\{1,\,2,\,3\}$, $r(1,\,2)=r(3,\,2)=1$, and $r(2,\,1)=r(2,\,3)=2$,
as in the left part of Figure \ref{fig1}. Then, we have $S_{\star}=\{1,\,3\}$;
thus, $X_{\textup{first}}(\cdot)$ on $S_{\star}$ represents the
null Markov chain. This phenomenon suggests additional time scales
of the metastable behavior of inclusion processes.

We further remark that \emph{non-reversible} inclusion processes exhibit
a completely different scheme of metastability in the first time scale.
Namely, in non-reversible inclusion processes, the time scale is $1/d_{N}$
if the limiting Markov chain of the process (cf. $X_{\textup{first}}(\cdot)$
in Theorem \ref{fts}) is symmetric, and it is $1/(d_{N}N)$ if the
limiting Markov chain is not symmetric. This is a remarkable difference
in the metastability of reversible and non-reversible inclusion processes,
and the details are provided in \cite[Theorems 3.10 and 3.12]{KS NRIP}. 
\end{rem}

\subsection{\label{ss-stsspl}Second time scale of the metastable behavior of
reversible dynamics: Simple case}

In this subsection, we present a simple case of our general main result.
Namely, we assume that the following condition holds throughout this
subsection.
\begin{condition}
\label{splcond}$S=\{x_{1},\,x_{2},\,y_{1},\,y_{2}\}$ with 
\begin{equation}
r(y_{p},\,x_{i})>r(x_{i},\,y_{p})>0\text{ for }1\le i,\,p\le2,\label{splcondeq1}
\end{equation}
\begin{equation}
r(x_{1},\,x_{2})=r(x_{2},\,x_{1})=0,\text{ and}\label{splcondeq2}
\end{equation}
\begin{equation}
m(x_{1})=m(x_{2}).\label{splcondeq3}
\end{equation}
In this setting, because the process is reversible, we have $m_{\star}(x_{1})=m_{\star}(x_{2})=1$,
$m_{\star}(y_{1})<1$, and $m_{\star}(y_{2})<1$, so that $S_{\star}=\{x_{1},\,x_{2}\}$.
See the right part of Figure \ref{fig1} for a visualization of this
simple model.
\end{condition}

\begin{figure}
\includegraphics[width=0.84\textwidth]{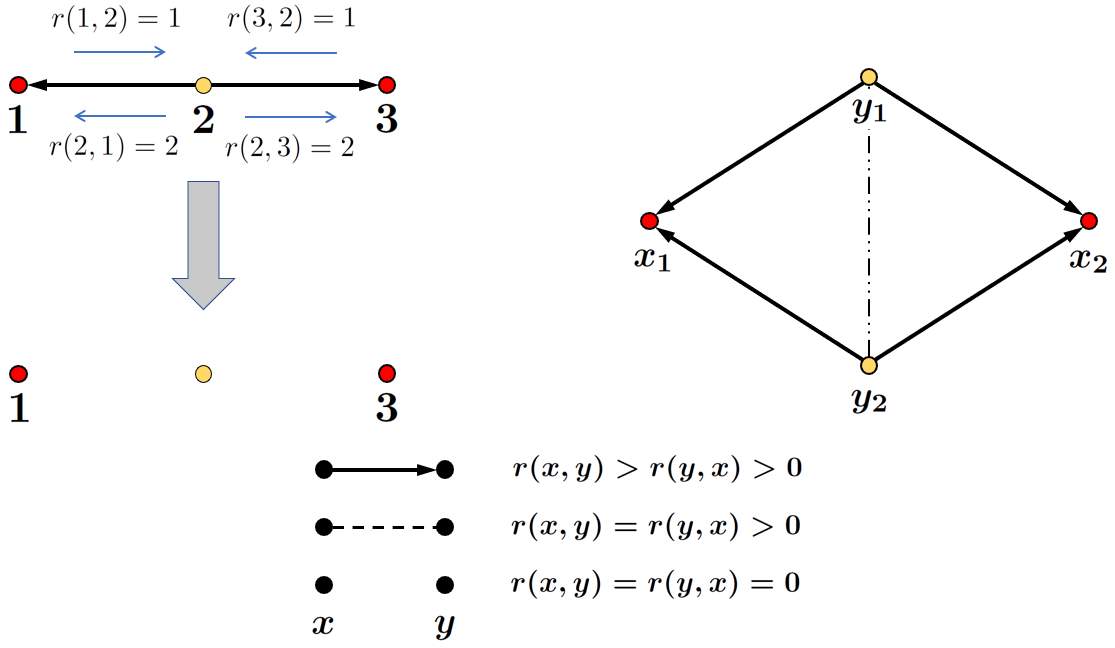} \caption{\label{fig1}(Left) a model where the first time scale of metastability
does not occur (Remark \ref{ftsrmk}). Red points denote the metastable
sites and yellow ones denote the rest. (Right) a simple model for
the second time scale of metastability (Condition \ref{splcond}).
The line between $y_{1}$ and $y_{2}$ implies that there is no restriction
on $r(y_{1},\,y_{2})$ and $r(y_{2},\,y_{1})$.}
\end{figure}

There are two reasons for providing a simple version of the theorem
first, instead of directly addressing the general main result. The
first reason is that this simple model already covers most of the
mathematical essentials of the second level of metastable behavior.
The second reason is that proposing the proof of the general main
result in a straightforward manner would be confusing to the readers,
while inspecting the proof of the simple case first is helpful.

We add the term \emph{spl}, which denotes \emph{simple}, in the superscripts
of some quantities defined in this subsection to avoid possible confusion
with the general main result in the following subsection.

By \eqref{splcondeq2}, we do not observe any movements in the first
time scale by Theorem \ref{fts}. Thus, it is natural to seek the
following time scale, in which metastable behavior is exhibited between
$x_{1}$ and $x_{2}$. Similar to the first scale, we define a projection
function $\Psi_{2,\,N}^{\textup{spl}}:\mathcal{E}_{N}^{\star}\to\{1,\,2\}$
by 
\[
\Psi_{2,\,N}^{\textup{spl}}(\xi^{x_{i}})=i\text{ for }1\le i\le2.
\]
Then, we define a process $Y_{N}^{\textup{spl}}(\cdot)$ on $\{1,\,2\}$
by 
\[
Y_{N}^{\textup{spl}}(t)=\Psi_{2,\,N}^{\textup{spl}}(\eta_{N}^{\star}(t))\text{ for }t\ge0.
\]
Following the notation of \cite{BDG}, we state that $d_{N}$ \emph{decays
subexponentially}, if 
\begin{equation}
\lim_{N\to\infty}d_{N}e^{\epsilon N}=\infty\text{ for any }\epsilon>0.\label{dNcond2}
\end{equation}
Hence, \eqref{dNcond2} indicates that $d_{N}$ decays more slowly
relative to any exponential scales. Moreover, we define a positive
constant $\mathfrak{R}$ by 
\begin{equation}
\mathfrak{R}=\int_{0}^{1}\frac{1}{\sum_{p=1}^{2}\frac{(1-m_{\star}(y_{p}))^{-1}}{\frac{1-t}{r(x_{1},\,y_{p})}+\frac{t}{r(x_{2},\,y_{p})}}}dt.\label{Tdef}
\end{equation}

\begin{thm}[Second time scale of reversible inclusion processes: Simple case]
\label{mainspl} Assume Condition \ref{splcond}. Suppose that $d_{N}$
decays subexponentially. Define the second time scale as $\theta_{N,\,2}=N/d_{N}^{2}$
and fix $i_{0}\in\{1,\,2\}$. Then, the law of the rescaled process
$\{Y_{N}^{\textup{spl}}(\theta_{N,\,2}t)\}_{t\ge0}$ starting at $i_{0}$
converges (with respect to the Skorokhod topology) on the path space
$D([0,\,\infty);\,\{1,\,2\})$ to the law of the Markov process on
$\{1,\,2\}$, starting at $i_{0}$ and jumping back and forth at rate
$1/\mathfrak{R}$. 
\end{thm}

\begin{rem}
\label{obstrmk}Note that Theorem \ref{mainspl} slightly generalizes
\cite[Theorem 2.5]{BDG}, and there is a sole additional non-metastable
site in the system. However, the approach used in \cite[Section 5]{BDG}
fails even in this simplest case, due to two important drawbacks.
First, the test function given in \cite[Subsection 5.2]{BDG} does
not provide a direct clue of the test function we need for this generalized
model, as this step requires a high-level understanding of the whole
landscape of the transition rates. This is provided in Subsection
\ref{ss-testfcndefspl}. Second, it is impossible to apply the Cauchy--Schwarz
inequality consecutively as in \cite[Subsection 5.1]{BDG}. This is
because the inequalities used there do not provide a consistent equality
condition; hence, this merely yields a weaker lower bound for the
capacities. To overcome this, we employ Theorem \ref{genThom} to
obtain the lower bound, which was proposed in \cite{Seo NRZRP}; see
Section \ref{s-LBspl} for further detail. 
\end{rem}

\subsection{\label{ss-sts}Second time scale of the metastable behavior of reversible
dynamics: General case}

Finally, in this subsection, we present the main result of this article
in the most general setting. To this end, we decompose $S_{\star}$
into \emph{irreducible components} with respect to $X_{\textup{first}}(\cdot)$,
which is the limiting dynamics in the first scale (see the left part
of Figure \ref{fig2}):
\begin{equation}
S_{\star}=\bigcup_{i=1}^{\kappa_{\star}}S_{i}^{(2)},\text{ where }S_{i}^{(2)}=\{x_{i,\,1},\,\dots,\,x_{i,\,\mathfrak{n}(i)}\}\text{ for }1\le i\le\kappa_{\star}.\label{Sstdecomp}
\end{equation}

\begin{figure}
\includegraphics[width=0.84\textwidth]{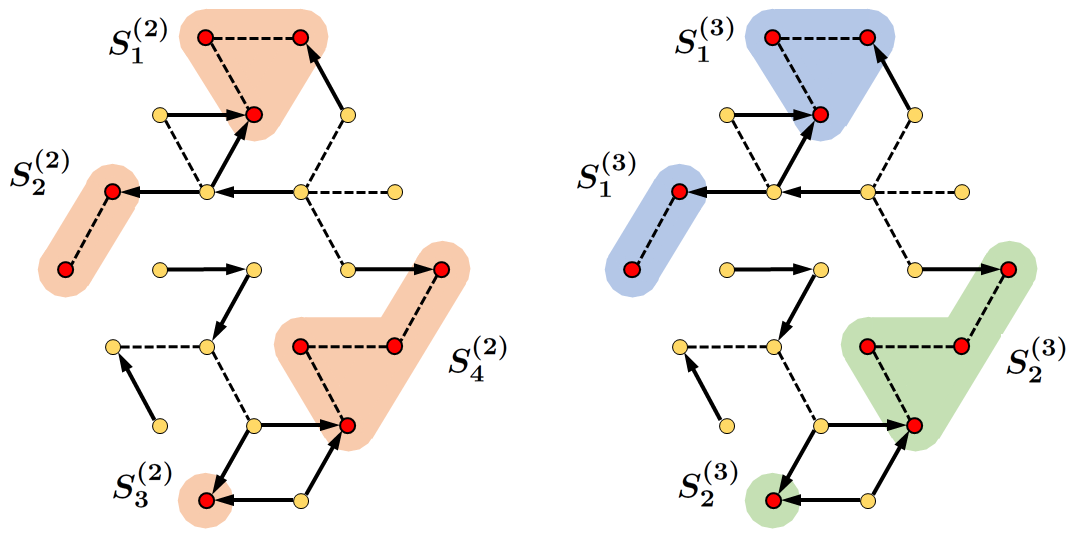}\caption{\label{fig2}(Left) a general model with four irreducible components
$S_{1}^{(2)}$, $S_{2}^{(2)}$, $S_{3}^{(2)}$, and $S_{4}^{(2)}$
according to the first time scale (cf. \eqref{Sstdecomp}). (Right)
the same model, in which $\{S_{1}^{(2)},\,S_{2}^{(2)}\}$ and $\{S_{3}^{(2)},\,S_{4}^{(2)}\}$
form two irreducible components $S_{1}^{(3)}$ and $S_{2}^{(3)}$,
respectively, according to the second time scale (Theorem \ref{main}).}
\end{figure}

Here, we use the second label ($1$ to $\mathfrak{n}(i)$) in the
elements of $S_{i}^{(2)}$ to emphasize that they belong to the same
set $S_{i}^{(2)}$. The common superscript $(2)$ denotes the second
time scale. More specifically, the system with transition rates $r(\cdot,\,\cdot)$
restricted to $S_{i}^{(2)}$ is irreducible for each $i\in\llbracket1,\,\kappa_{\star}\rrbracket$,
and $r(x_{i,\,n},\,x_{j,\,m})=0$ for all $i\ne j$, $1\le n\le\mathfrak{n}(i)$,
and $1\le m\le\mathfrak{n}(j)$. By definition, we have
\[
|S_{\star}|=\sum_{i=1}^{\kappa_{\star}}|S_{i}^{(2)}|=\sum_{i=1}^{\kappa_{\star}}\mathfrak{n}(i).
\]
In this setting, our dynamics in the second scale $\theta_{N,\,2}=N/d_{N}^{2}$
takes place on the set of $\kappa_{\star}$ elements; $\{\mathcal{E}_{N}(S_{i}^{(2)}):1\le i\le\kappa_{\star}\}$.
All elements in $S_{i}^{(2)}$ that are connected in the first scale
$\theta_{N,\,1}$ form a metastable group in the second scale (Theorem
\ref{main}(1)). If $\kappa_{\star}=1$, we observe all possible metastable
movements in the first time scale; thus, the metastable behavior is
fully characterized by Theorem \ref{fts}, and there is no need for
an additional time scale. Hence, hereafter we assume that $\kappa_{\star}\ge2$.
Moreover, we write $S\setminus S_{\star}=\{y_{1},\,\dots,\,y_{\kappa_{0}}\}$,
such that we have 
\[
S=\{x_{i,\,n}:1\le i\le\kappa_{\star},\ 1\le n\le\mathfrak{n}(i)\}\cup\{y_{1},\,\dots,\,y_{\kappa_{0}}\}.
\]
From $\kappa_{\star}\ge2$ and irreducibility of the underlying random
walk, it is straightforward that $\kappa_{0}\ge1$. For $A\subseteq\llbracket1,\,\kappa_{\star}\rrbracket$,
we introduce the notation $\mathcal{E}_{N}^{(2)}(A)=\bigcup_{i\in A}\mathcal{E}_{N}(S_{i}^{(2)})$.
If $A=\{a\}$, we abbreviate $\mathcal{E}_{N}^{(2)}(\{a\})$ as $\mathcal{E}_{N}^{(2)}(a)$.

As in the simple case, we define a projection function. Let $\Psi_{2,\,N}:\mathcal{E}_{N}^{\star}\to\llbracket1,\,\kappa_{\star}\rrbracket$
be defined by 
\[
\Psi_{2,\,N}(\xi^{x_{i,\,n}})=i\text{ for }1\le i\le\kappa_{\star}\text{ and }1\le n\le\mathfrak{n}(i).
\]
Then, we define a process $Y_{N}(\cdot)$ on $\llbracket1,\,\kappa_{\star}\rrbracket$
by 
\[
Y_{N}(t)=\Psi_{2,\,N}(\eta_{N}^{\star}(t))\text{ for }t\ge0.
\]
In contrast to $X_{N}(\cdot)$ (and $Y_{N}^{\textup{spl}}(\cdot)$),
$Y_{N}(\cdot)$ is not necessarily Markovian, since $\Psi_{2,\,N}$
is generally not bijective.

We are ready to state our main theorem. We define constants $\mathfrak{R}_{i,\,j}$
for $i,\,j\in\llbracket1,\,\kappa_{\star}\rrbracket$: 
\begin{equation}
\mathfrak{R}_{i,\,j}=\int_{0}^{1}\frac{1}{\sum_{n=1}^{\mathfrak{n}(i)}\sum_{m=1}^{\mathfrak{n}(j)}\sum_{p=1}^{\kappa_{0}}\frac{(1-m_{\star}(y_{p}))^{-1}}{\frac{1-t}{r(x_{i,\,n},\,y_{p})}+\frac{t}{r(x_{j,\,m},\,y_{p})}}}dt.\label{Tijdef}
\end{equation}
In \eqref{Tijdef}, we regard the fraction in the denominator as $0$
if $r(x_{i,\,n},\,y_{p})r(x_{j,\,m},\,y_{p})=0$. In this sense, we
write $\mathfrak{R}_{i,\,j}=\infty$ if $r(x_{i,\,n},\,y_{p})r(x_{j,\,m},\,y_{p})=0$
for all $1\le n\le\mathfrak{n}(i)$, $1\le m\le\mathfrak{n}(j)$,
and $1\le p\le\kappa_{0}$\footnote{We take $1/\infty$ to be $0$ in the following.}.
It is clear that $\mathfrak{R}_{i,\,j}=\mathfrak{R}_{j,\,i}$.

Further, for $\mathcal{A}\subseteq\mathcal{H}_{N}$, let $\tau_{\mathcal{A}}=\tau_{\mathcal{A}}^{N}$
be the hitting time of the set $\mathcal{A}$. 
\begin{thm}[Second time scale of reversible inclusion processes: General case]
\label{main} Suppose that $d_{N}$ decays subexponentially. Then,
with $\theta_{N,\,2}=N/d_{N}^{2}$, the following statements hold.
\begin{enumerate}
\item For each $1\le i\le\kappa_{\star}$, $\mathcal{E}_{N}(S_{i}^{(2)})$
thermalizes before reaching another metastable set, i.e., 
\begin{equation}
\lim_{N\to\infty}\inf_{\eta,\,\zeta\in\mathcal{E}_{N}(S_{i}^{(2)})}\mathbb{P}_{\eta}[\tau_{\{\zeta\}}<\tau_{\mathcal{E}_{N}(S_{\star}\setminus S_{i}^{(2)})}]=1.\label{maineq}
\end{equation}
\item Fix $i_{0}\in\llbracket1,\,\kappa_{\star}\rrbracket$. Then, the law
of the rescaled process $\{Y_{N}(\theta_{N,\,2}t)\}_{t\ge0}$ starting
at $i_{0}$ converges (with respect to the Skorokhod topology) on
the path space $D([0,\,\infty);\,\llbracket1,\,\kappa_{\star}\rrbracket)$
to the law of the Markov process $X_{\textup{second}}(\cdot)$ on
$\llbracket1,\,\kappa_{\star}\rrbracket$ starting at $i_{0}$ and
defined by the generator, acting on functions $f:\llbracket1,\,\kappa_{\star}\rrbracket\to\mathbb{R}$,
given by 
\begin{equation}
(\mathscr{L}_{2}f)(i)=\sum_{j\in\llbracket1,\,\kappa_{\star}\rrbracket\setminus\{i\}}\frac{1}{|S_{i}^{(2)}|\mathfrak{R}_{i,\,j}}\{f(j)-f(i)\}\text{ for }i\in\llbracket1,\,\kappa_{\star}\rrbracket.\label{maineq2}
\end{equation}
Consequently, $S_{\star}$ is decomposed into irreducible components
with respect to $X_{\textup{second}}(\cdot)$. We denote this partition
by 
\begin{equation}
S_{\star}=S_{1}^{(3)}\cup\cdots\cup S_{\gamma_{\star}}^{(3)}.\label{maineq3}
\end{equation}
\item Fix $1\le i\le\kappa_{\star}$ and $1\le n\le\mathfrak{n}(i)$. From
\eqref{maineq3}, there is a unique $\hat{i}\in\llbracket1,\,\gamma_{\star}\rrbracket$
such that $S_{i}^{(2)}\subseteq S_{\hat{i}}^{(3)}$ (see the right
part of Figure \ref{fig2}). Then, starting at $\xi^{x_{i,\,n}}$,
the process spends negligible time outside $\mathcal{E}_{N}(S_{\hat{i}}^{(3)})$,
which is uniform in all choices of $(i,\,n)$, i.e., for all $t>0$,
\[
\lim_{N\to\infty}\sup_{i\in\llbracket1,\,\kappa_{\star}\rrbracket,\,n\in\llbracket1,\,\mathfrak{n}(i)\rrbracket}\mathbb{E}_{\xi^{x_{i,\,n}}}\Big[\int_{0}^{t}\mathbbm{1}\{\eta_{N}(\theta_{N,\,2}s)\notin\mathcal{E}_{N}(S_{\hat{i}}^{(3)})\}ds\Big]=0.
\]
\end{enumerate}
\end{thm}

\begin{rem}
\label{mainrmk}We remark several issues regarding the main theorem. 
\begin{enumerate}
\item Note that Theorem \ref{mainspl} is indeed a special case of Theorem
\ref{main}, where $\kappa_{\star}=2$, $\mathfrak{n}(1)=\mathfrak{n}(2)=1$,
$x_{1,\,1}=x_{1}$, $x_{2,\,1}=x_{2}$, $\kappa_{0}=2$, and $r(x_{1,\,1},\,y_{p})r(x_{2,\,1},\,y_{p})>0$
for $p=1,\,2$. 
\item Theorem \ref{main} proves the conjecture in \cite{BDG} that $\theta_{N,\,2}=N/d_{N}^{2}$
is indeed the \emph{second} time scale in the metastability of reversible
inclusion processes, in the sense that there are no intermediate time
scales between $\theta_{N,\,1}$ and $\theta_{N,\,2}$.
\item By \eqref{maineq2}, for $i,j\in\llbracket1,\,\kappa_{\star}\rrbracket$,
the limit transition rate from $S_{i}^{(2)}$ to $S_{j}^{(2)}$ is
$1/(|S_{i}^{(2)}|\mathfrak{R}_{i,\,j})$. This vanishes if and only
if $\mathfrak{R}_{i,\,j}=\infty$, which is equivalent to state that
the graph distance\footnote{\label{fngd}For two subsets $A$ and $B$ of $S$, the \emph{graph
distance} between $A$ and $B$ is defined as $\min\{n\ge0:\exists x_{0},\,\dots,\,x_{n}\in S\text{ such that }x_{0}\in A,\ x_{n}\in B,\text{ and }r(x_{i},\,x_{i+1})>0\text{ for }0\le i\le n-1\}$.} between $S_{i}^{(2)}$ and $S_{j}^{(2)}$ is bigger than $2$. In
this sense, we cannot observe a metastable movement between $S_{i}^{(3)}$
and $S_{j}^{(3)}$, $i,\,j\in\llbracket1,\,\gamma_{\star}\rrbracket$,
in the second time scale $\theta_{N,\,2}=N/d_{N}^{2}$. Because the
original underlying random walk is irreducible, it is natural to suggest
the existence of a third time scale, where we can detect metastable
movements among $S_{i}^{(3)}$, $1\le i\le\gamma_{\star}$. In \cite{BDG},
this scale is strongly expected to be $\theta_{N,\,3}=N^{2}/d_{N}^{3}$.
Moreover, even though $\theta_{N,\,3}$ is proven to be the longest
scale possible in \cite{BDG}, there is a possibility that an intermediate
time scale exists between $\theta_{N,\,2}$ and $\theta_{N,\,3}$.
This can be considered a fruitful future research topic. 
\item \label{tshard}According to the previous remark, we attempted to apply
the methodology used in this study to address the third time scale
of metastability of the inclusion process. The first obstacle is encountered
in constructing an exquisite test function which approximates the
equilibrium potential, as in Subsection \ref{ss-testfcndef}. This
becomes far more complicated when compared to what is done here, as
the geometric property of the typical path is highly complex in the
third time scale. The other obstacle is that the asymptotic value
of the equilibrium potential, which is successfully determined in
the second time scale in Subsections \ref{ss-remflowspl} and \ref{ss-remflow},
is unknown in the third time scale. In order to apply a similar methodology
in the third scale, a precise information on the equilibrium potential
of the entire typical path between metastable valleys is needed. At
this point, this is a technically difficult task. 
\item Note that \eqref{maineq} is not included in the previous metastability
results, i.e., Theorems \ref{fts} and \ref{mainspl}. This is because
in the setting of the previous theorems, each metastable valley is
a singleton; hence, thermalization is obvious. 
\item In this study, all convergence results are provided in terms of convergence
of the trace process in the Skorokhod topology. In fact, there are
alternatives to the stated results, represented by convergence of
the original process in the soft topology \cite{Landim Soft} and
convergence of finite-dimensional marginal distributions \cite{LanLouMou}.
We remark that given our result, the other modes of convergence may
be easily proven by verifying some additional technical conditions
presented in the foresaid studies. In Section \ref{s-PMT}, we prove
the convergence of finite-dimensional marginal distributions using
\cite[Proposition 2.1]{LanLouMou}. This result is needed to prove
(3) of Theorem \ref{main}. 
\end{enumerate}
\end{rem}

\section{\label{s-outline}Outline of proof of Theorems \ref{mainspl} and
\ref{main}}

In this section, we review some potential-theoretic notions and explain
how to apply these skills to prove the main theorems, namely Theorems
\ref{mainspl} and \ref{main}. We remark that some claims in this
section hold due to reversibility, and more general results are provided
in e.g., \cite{GL} and \cite{Slowik}.

\subsection{Potential theory and discrete flows}

In this subsection, we introduce some crucial notions from the potential
theory, which are fundamental in stating and proving our results. 
\begin{defn}
\label{potdef}
\begin{enumerate}
\item The \emph{Dirichlet form} $D_{N}(\cdot)$ associated to our inclusion
process is defined by, for $f:\mathcal{H}_{N}\to\mathbb{R}$,
\begin{equation}
\begin{aligned}D_{N}(f) & =\frac{1}{2}\sum_{\eta,\,\zeta\in\mathcal{H}_{N}}\mu_{N}(\eta)\mathbf{q}_{N}(\eta,\,\zeta)\{f(\zeta)-f(\eta)\}^{2}\\
 & =\frac{1}{2}\sum_{\eta\in\mathcal{H}_{N}}\sum_{x,\,y\in S}\mu_{N}(\eta)\eta_{x}(d_{N}+\eta_{y})r(x,\,y)\{f(\sigma^{x,\,y}\eta)-f(\eta)\}^{2}.
\end{aligned}
\label{Diridef}
\end{equation}
\item If $\mathcal{A}$ and $\mathcal{B}$ are disjoint and non-empty subsets
of $\mathcal{H}_{N}$, then the \emph{equilibrium potential} $h_{\mathcal{A},\,\mathcal{B}}$
between $\mathcal{A}$ and $\mathcal{B}$ is defined by 
\[
h_{\mathcal{A},\,\mathcal{B}}(\eta)=\mathbb{P}_{\eta}[\tau_{\mathcal{A}}<\tau_{\mathcal{B}}].
\]
Note that $h_{\mathcal{A},\,\mathcal{B}}=1$ on $\mathcal{A}$ and
$h_{\mathcal{A},\mathcal{B}}=0$ on $\mathcal{B}$.
\item The \emph{capacity} between $\mathcal{A}$ and $\mathcal{B}$ is defined
by 
\[
\mathrm{Cap}_{N}(\mathcal{A},\,\mathcal{B})=D_{N}(h_{\mathcal{A},\,\mathcal{B}}).
\]
\end{enumerate}
\end{defn}

Later in this section, we show that to study the metastable behavior
of interacting particle systems, it suffices to obtain sharp asymptotics
on the corresponding capacities between metastable valleys (cf. \eqref{B-L lem}
and Proposition \ref{B-L conv}). In the following, we define the
discrete flows corresponding to our system. 
\begin{defn}
\label{flowdef}
\begin{enumerate}
\item A function $\phi:\mathcal{H}_{N}\times\mathcal{H}_{N}\to\mathbb{R}$
is considered a \emph{(discrete) flow} on $\mathcal{H}_{N}$, if
\begin{enumerate}
\item $\phi$ is \emph{anti-symmetric}; that is, $\phi(\eta,\,\zeta)=-\phi(\zeta,\,\eta)$
for all $\eta,\,\zeta\in\mathcal{H}_{N}$, and
\item $\phi$ is \emph{compatible} to $\mathbf{q}_{N}(\cdot,\,\cdot)$;
that is, $\phi(\eta,\,\zeta)>0$ only if $\mathbf{q}_{N}(\eta,\,\zeta)>0$.
\end{enumerate}
\item An inner product structure $\langle\cdot,\,\cdot\rangle_{N}$ is defined
for the flows on $\mathcal{H}_{N}$: For flows $\phi$ and $\psi$,
\begin{equation}
\langle\phi,\,\psi\rangle_{N}=\frac{1}{2}\sum_{\eta,\,\zeta\in\mathcal{H}_{N}:\,\mathbf{q}_{N}(\eta,\,\zeta)>0}\frac{\phi(\eta,\,\zeta)\psi(\eta,\,\zeta)}{\mu_{N}(\eta)\mathbf{q}_{N}(\eta,\,\zeta)}.\label{flnormdef}
\end{equation}
Consequently, a norm $\|\cdot\|_{N}$ is established in the system;
$\|\phi\|_{N}=\sqrt{\langle\phi,\,\phi\rangle_{N}}$.
\item Given a flow $\phi$ on $\mathcal{H}_{N}$, the \emph{divergence}
of $\phi$ at $\eta\in\mathcal{H}_{N}$ is defined as 
\[
(\mathrm{div}\,\phi)(\eta)=\sum_{\zeta\in\mathcal{H}_{N}}\phi(\eta,\,\zeta)=\sum_{\zeta\in\mathcal{H}_{N}:\,\mathbf{q}_{N}(\eta,\,\zeta)>0}\phi(\eta,\,\zeta).
\]
Following this notation, the divergence of $\phi$ on $\mathcal{A}\subseteq\mathcal{H}_{N}$
is 
\[
(\mathrm{div}\,\phi)(\mathcal{A})=\sum_{\eta\in\mathcal{A}}(\mathrm{div}\,\phi)(\eta).
\]
\end{enumerate}
\end{defn}

Here, we connect two notions defined above: For a function $f:\mathcal{H}_{N}\to\mathbb{R}$,
we define a flow $\Phi_{f}$ on $\mathcal{H}_{N}$, which is given
by 
\begin{equation}
\Phi_{f}(\eta,\,\zeta)=\mu_{N}(\eta)\mathbf{q}_{N}(\eta,\,\zeta)\{f(\eta)-f(\zeta)\};\ \eta\in\mathcal{H}_{N},\ \zeta\in\mathcal{H}_{N}.\label{fcnflow}
\end{equation}
Then, the following proposition explains the relationship between
potential-theoretic objects and discrete flows. 
\begin{prop}
\label{flowDiri}For each $f:\mathcal{H}_{N}\to\mathbb{R}$, we have
$\|\Phi_{f}\|_{N}^{2}=D_{N}(f)$. Consequently, if $\mathcal{A}$
and $\mathcal{B}$ are disjoint and non-empty subsets of $\mathcal{H}_{N}$,
\begin{equation}
\|\Phi_{h_{\mathcal{A},\,\mathcal{B}}}\|_{N}^{2}=D_{N}(h_{\mathcal{A},\,\mathcal{B}})=\mathrm{Cap}_{N}(\mathcal{A},\,\mathcal{B}).\label{flowDirieq}
\end{equation}
\end{prop}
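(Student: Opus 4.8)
The plan is to verify directly from the definitions that $\Phi_f$ is a genuine flow, and then to expand $\|\Phi_f\|_N^2$ via the inner product \eqref{flnormdef} and match it term-by-term against the Dirichlet form \eqref{Diridef}. The only substantive ingredient beyond algebra is the reversibility identity $\mu_N(\eta)\mathbf{q}_N(\eta,\zeta)=\mu_N(\zeta)\mathbf{q}_N(\zeta,\eta)$ from Proposition \ref{mun}.

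First I would check that $\Phi_f$ satisfies the two requirements of Definition \ref{flowdef}(1). Compatibility is immediate from \eqref{fcnflow}: if $\mathbf{q}_N(\eta,\zeta)=0$ then $\Phi_f(\eta,\zeta)=0$, so $\Phi_f(\eta,\zeta)>0$ forces $\mathbf{q}_N(\eta,\zeta)>0$. Anti-symmetry is where reversibility enters: using $\mu_N(\zeta)\mathbf{q}_N(\zeta,\eta)=\mu_N(\eta)\mathbf{q}_N(\eta,\zeta)$ we obtain
\[
\Phi_f(\zeta,\eta)=\mu_N(\zeta)\mathbf{q}_N(\zeta,\eta)\{f(\zeta)-f(\eta)\}=\mu_N(\eta)\mathbf{q}_N(\eta,\zeta)\{f(\zeta)-f(\eta)\}=-\Phi_f(\eta,\zeta).
\]

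Next I would compute the squared norm. Plugging \eqref{fcnflow} into \eqref{flnormdef} with $\phi=\psi=\Phi_f$, each summand becomes
\[
\frac{\Phi_f(\eta,\zeta)^2}{\mu_N(\eta)\mathbf{q}_N(\eta,\zeta)}=\frac{\mu_N(\eta)^2\mathbf{q}_N(\eta,\zeta)^2\{f(\eta)-f(\zeta)\}^2}{\mu_N(\eta)\mathbf{q}_N(\eta,\zeta)}=\mu_N(\eta)\mathbf{q}_N(\eta,\zeta)\{f(\eta)-f(\zeta)\}^2,
\]
and since the terms with $\mathbf{q}_N(\eta,\zeta)=0$ vanish identically, the restricted sum over pairs with $\mathbf{q}_N(\eta,\zeta)>0$ coincides with the full sum over $\mathcal{H}_N\times\mathcal{H}_N$. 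Hence
\[
\|\Phi_f\|_N^2=\frac{1}{2}\sum_{\eta,\zeta\in\mathcal{H}_N}\mu_N(\eta)\mathbf{q}_N(\eta,\zeta)\{f(\zeta)-f(\eta)\}^2=D_N(f),
\]
which is precisely \eqref{Diridef}.

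Finally, the displayed consequence \eqref{flowDirieq} follows by specializing $f$ to the equilibrium potential $h_{\mathcal{A},\mathcal{B}}$: the identity just proven gives $\|\Phi_{h_{\mathcal{A},\mathcal{B}}}\|_N^2=D_N(h_{\mathcal{A},\mathcal{B}})$, and this equals $\mathrm{Cap}_N(\mathcal{A},\mathcal{B})$ by Definition \ref{potdef}(3). I do not expect any real obstacle here; the proposition is essentially a rewriting of the Dirichlet form in flow language, and the sole non-formal input is the reversibility identity, which is what makes $\Phi_f$ anti-symmetric and, implicitly, what makes the inner product \eqref{flnormdef} well defined in the first place.
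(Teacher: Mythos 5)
Your proof is correct and follows essentially the same computation as the paper: expand $\|\Phi_f\|_N^2$ via \eqref{flnormdef}, cancel one factor of $\mu_N(\eta)\mathbf{q}_N(\eta,\zeta)$, and identify the result with \eqref{Diridef}, then specialize to $h_{\mathcal{A},\mathcal{B}}$. Your additional check that $\Phi_f$ is anti-symmetric (via reversibility) and compatible is a sensible verification the paper leaves implicit, but it does not change the argument.
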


\begin{proof}
The first statement follows by a simple calculation. Namely,
\[
\begin{aligned}\|\Phi_{f}\|_{N}^{2} & =\frac{1}{2}\sum_{\eta,\,\zeta\in\mathcal{H}_{N}:\,\mathbf{q}_{N}(\eta,\,\zeta)>0}\frac{\{\Phi_{f}(\eta,\,\zeta)\}^{2}}{\mu_{N}(\eta)\mathbf{q}_{N}(\eta,\,\zeta)}\\
 & =\frac{1}{2}\sum_{\eta,\,\zeta\in\mathcal{H}_{N}:\,\mathbf{q}_{N}(\eta,\,\zeta)>0}\mu_{N}(\eta)\mathbf{q}_{N}(\eta,\,\zeta)\{f(\eta)-f(\zeta)\}^{2}=D_{N}(f).
\end{aligned}
\]
Then, \eqref{flowDirieq} is straightforward by Definition \ref{potdef}(3). 
\end{proof}

\subsection{Dual variational principles: The Dirichlet--Thomson principle}

According to the definitions in the last subsection, two important
variational principles hold, namely the Dirichlet principle and the
Thomson principle. These statements play a key role in calculating
the explicit behavior of the capacities in the limit $N\to\infty$. 
\begin{thm}[The Dirichlet--Thomson principle]
\label{Diri-Thom} Suppose that $\mathcal{A}$ and $\mathcal{B}$
are two disjoint and non-empty subsets of $\mathcal{H}_{N}$.
\begin{enumerate}
\item (Dirichlet) It holds that
\[
\mathrm{Cap}_{N}(\mathcal{A},\,\mathcal{B})=\inf_{F}D_{N}(F),
\]
where the infimum runs over functions $F:\mathcal{H}_{N}\to\mathbb{R}$
satisfying
\begin{equation}
F\big|_{\mathcal{A}}=1\text{ and }F\big|_{\mathcal{B}}=0.\label{Diri-Thomeq}
\end{equation}
Moreover, the unique optimizer of the infimum is $h_{\mathcal{A},\,\mathcal{B}}$.
\item (Thomson) It holds that 
\[
\mathrm{Cap}_{N}(\mathcal{A},\,\mathcal{B})=\sup_{\phi}\frac{1}{\|\phi\|_{N}^{2}},
\]
where the supremum runs over flows $\phi$ on $\mathcal{H}_{N}$ satisfying
\begin{equation}
(\mathrm{div}\,\phi)(\mathcal{A})=1\text{ and }(\mathrm{div}\,\phi)(\eta)=0\text{ for all }\eta\in\mathcal{H}_{N}\setminus(\mathcal{A\cup\mathcal{B}}).\label{Diri-Thomeq2}
\end{equation}
Moreover, the unique optimizer of the supremum is $\Phi_{h_{\mathcal{A},\,\mathcal{B}}}/\mathrm{Cap}_{N}(\mathcal{A},\,\mathcal{B})$.
\end{enumerate}
\end{thm}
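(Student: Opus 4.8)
The statement is entirely classical in the reversible setting, and the plan is to prove the two variational characterizations by the standard orthogonality/Hilbert-space arguments adapted to the inner product structures just introduced, then identify the optimizers. For part (1), I would first recall that the generator $\mathcal{L}_N$ is self-adjoint in $L^2(\mu_N)$ by reversibility, so that for any $f$ one has $\langle -\mathcal{L}_N f, f\rangle_{\mu_N} = D_N(f)$, and more generally the polarized identity $\langle -\mathcal{L}_N f, g\rangle_{\mu_N} = \tfrac12\sum_{\eta,\zeta}\mu_N(\eta)\mathbf{q}_N(\eta,\zeta)\{f(\zeta)-f(\eta)\}\{g(\zeta)-g(\eta)\}$. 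The function $h=h_{\mathcal{A},\mathcal{B}}$ is the solution of the Dirichlet problem $(\mathcal{L}_N h)(\eta)=0$ for $\eta\notin\mathcal{A}\cup\mathcal{B}$ with $h|_{\mathcal{A}}=1$, $h|_{\mathcal{B}}=0$ (this is just the first-step analysis for $\mathbb{P}_\eta[\tau_{\mathcal{A}}<\tau_{\mathcal{B}}]$, which I would state as a standard fact). Given any competitor $F$ with the same boundary values, write $F=h+g$ where $g$ vanishes on $\mathcal{A}\cup\mathcal{B}$. Then $D_N(F)=D_N(h)+2\langle -\mathcal{L}_N h, g\rangle_{\mu_N}+D_N(g)$, and the cross term vanishes because $-\mathcal{L}_N h$ is supported on $\mathcal{A}\cup\mathcal{B}$ where $g=0$. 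Hence $D_N(F)=D_N(h)+D_N(g)\ge D_N(h)=\mathrm{Cap}_N(\mathcal{A},\mathcal{B})$, with equality iff $D_N(g)=0$, i.e. $g$ constant on the (connected, by irreducibility) state space, hence $g\equiv 0$. This gives both the identity and uniqueness of the optimizer.

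For part (2), the plan is to exhibit the analogous orthogonal decomposition in the space of flows with the inner product $\langle\cdot,\cdot\rangle_N$ from \eqref{flnormdef}. Set $\phi^\star=\Phi_{h}/\mathrm{Cap}_N(\mathcal{A},\mathcal{B})$; using $(\mathrm{div}\,\Phi_h)(\eta)=-(\mathcal{L}_N h)(\eta)\cdot$ (more precisely, $\sum_\zeta \mu_N(\eta)\mathbf{q}_N(\eta,\zeta)\{h(\eta)-h(\zeta)\} = -\mu_N(\eta)(\mathcal{L}_N h)(\eta)$, and summing over $\eta\in\mathcal{A}$ gives, by the probabilistic identity $D_N(h)=\sum_{\eta\in\mathcal{A}}\mu_N(\eta)\sum_\zeta \mathbf{q}_N(\eta,\zeta)\{h(\eta)-h(\zeta)\}$, total divergence $\mathrm{Cap}_N(\mathcal{A},\mathcal{B})$ on $\mathcal{A}$), one checks $\phi^\star$ satisfies the normalization \eqref{Diri-Thomeq2} and $\|\phi^\star\|_N^2 = D_N(h)/\mathrm{Cap}_N(\mathcal{A},\mathcal{B})^2 = 1/\mathrm{Cap}_N(\mathcal{A},\mathcal{B})$. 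For an arbitrary admissible flow $\phi$, write $\psi=\phi-\phi^\star$, which is divergence-free off $\mathcal{A}\cup\mathcal{B}$ and has zero total divergence on $\mathcal{A}$; the key computation is $\langle\phi^\star,\psi\rangle_N = \tfrac12\sum_{\eta,\zeta}\frac{\Phi_h(\eta,\zeta)}{\mathrm{Cap}_N}\frac{\psi(\eta,\zeta)}{\mu_N(\eta)\mathbf{q}_N(\eta,\zeta)} = \frac{1}{2\,\mathrm{Cap}_N}\sum_{\eta,\zeta}\{h(\eta)-h(\zeta)\}\psi(\eta,\zeta)$, and by antisymmetry of $\psi$ this equals $\frac{1}{\mathrm{Cap}_N}\sum_\eta h(\eta)(\mathrm{div}\,\psi)(\eta) = \frac{1}{\mathrm{Cap}_N}\sum_{\eta\in\mathcal{A}}(\mathrm{div}\,\psi)(\eta)=0$ (since $h=1$ on $\mathcal{A}$, $h=0$ on $\mathcal{B}$, and $\mathrm{div}\,\psi=0$ elsewhere). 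Therefore $\|\phi\|_N^2=\|\phi^\star\|_N^2+\|\psi\|_N^2\ge 1/\mathrm{Cap}_N(\mathcal{A},\mathcal{B})$, with equality iff $\psi=0$, yielding the supremum, the value, and uniqueness.

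The only genuinely non-routine ingredient is the telescoping identity $\sum_{\eta,\zeta}g(\eta,\zeta)\phi(\eta,\zeta) = 2\sum_\eta g_0(\eta)(\mathrm{div}\,\phi)(\eta)$ valid whenever $g(\eta,\zeta)=g_0(\eta)-g_0(\zeta)$ and $\phi$ is antisymmetric — i.e. the discrete integration-by-parts relating a "gradient" flow paired against an arbitrary flow to the divergence paired against the potential. I expect this, together with bookkeeping the support of $\mathcal{L}_N h$ and pinning down that the probabilistic harmonicity $(\mathcal{L}_N h)\equiv 0$ off $\mathcal{A}\cup\mathcal{B}$ is exactly what makes all cross terms vanish, to be the crux; everything else is expansion of squares and invoking irreducibility for uniqueness. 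I would present the identity as a one-line lemma and then run both parts (1) and (2) in parallel through it, since the two proofs are formally dual under the substitution $f\leftrightarrow\phi$, $D_N\leftrightarrow\|\cdot\|_N^2$, $\mathcal{L}_N\leftrightarrow\mathrm{div}$.
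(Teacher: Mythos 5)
Your proof plan is correct, but note that the paper itself does not prove Theorem \ref{Diri-Thom} at all: it simply refers to \cite{G-L} for the Dirichlet principle and \cite{S} for the Thomson principle, which in fact establish more general non-reversible versions of both statements. What you propose is the classical self-contained argument in the reversible setting: the orthogonal (projection) decomposition $F=h+g$ in $L^2(\mu_N)$ for the Dirichlet side, with the cross term killed because $-\mathcal{L}_N h_{\mathcal{A},\mathcal{B}}$ is supported on $\mathcal{A}\cup\mathcal{B}$ where $g$ vanishes, and the dual decomposition $\phi=\phi^{\star}+\psi$ in the flow space for the Thomson side, with the cross term killed by the discrete integration-by-parts identity pairing $\Phi_{h}$ against a flow whose divergence vanishes off $\mathcal{A}\cup\mathcal{B}$ and sums to zero on $\mathcal{A}$. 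All the individual steps check out: harmonicity of $h_{\mathcal{A},\mathcal{B}}$ off $\mathcal{A}\cup\mathcal{B}$ by first-step analysis, $(\mathrm{div}\,\Phi_h)(\mathcal{A})=D_N(h)=\mathrm{Cap}_N(\mathcal{A},\mathcal{B})$, the norm identity $\left\Vert \Phi_h\right\Vert_N^2=D_N(h)$ (which is Proposition \ref{flowDiri}), and the uniqueness arguments via irreducibility and positivity of the edge weights; note also that both the antisymmetry of $\Phi_h$ and the symmetry of the edge set $\{\mathbf{q}_N>0\}$ rely on reversibility, which you implicitly and correctly use. The trade-off is the expected one: your route is more elementary and fully self-contained but is tied to reversibility, whereas the cited references cover the non-reversible generalizations (at the cost of heavier machinery), which is why the author chose to quote them rather than reprove the reversible case.
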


In fact, the two principles in Theorem \ref{Diri-Thom} have generalizations
to non-reversible dynamics. Namely, \cite[Theorem 2.7]{GL} gives
the non-reversible version of (1), and \cite[Proposition 2.6]{Slowik}
gives the non-reversible version of (2). We refer to the references
\cite{GL,Slowik} for the proof of Theorem \ref{Diri-Thom}.
\begin{rem}
\label{Thomrmk}Like the Dirichlet principle, the Thomson principle
can also be stated in terms of test functions; however, it is somewhat
more complicated to state the result, which is provided in \cite[Proposition 2.1(ii)]{Slowik}. 
\end{rem}

The application of Theorem \ref{Diri-Thom} runs generally as follows.
First, we construct test functions and flows, say $g$ and $\psi$,
which satisfy \eqref{Diri-Thomeq} and \eqref{Diri-Thomeq2}, respectively.
Second, we apply the principles to obtain 
\[
\frac{1}{\|\psi\|_{N}^{2}}\le\mathrm{Cap}_{N}(\mathcal{A},\,\mathcal{B})\le D_{N}(g).
\]
Finally, we send $N$ to infinity to obtain the desired estimate.
From \eqref{flowDirieq}, it is natural to take $g$ and $\psi$,
which in some sense approximate $h_{\mathcal{A},\,\mathcal{B}}$ and
$\Phi_{h_{\mathcal{A},\,\mathcal{B}}}/\mathrm{Cap}_{N}(\mathcal{A},\,\mathcal{B})$,
respectively.

According to the above methodology, the Dirichlet principle is relatively
easy to apply, as the restriction \eqref{Diri-Thomeq} is feeble.
In contrast, the Thomson principle has a strong restriction on the
test flows, \eqref{Diri-Thomeq2}. In particular, it is practically
impossible to find a test flow that has vanishing divergence in each
configuration in $\mathcal{H}_{N}\setminus(\mathcal{A}\cup\mathcal{B})$.

To overcome this drawback, we need the following variant of the useful
result \cite[Theorem 5.3]{Seo NRZRP}, which generalizes the Thomson
principle. 
\begin{thm}[Generalized Thomson principle]
\label{genThom} Suppose that $\mathcal{A}$ and $\mathcal{B}$ are
two disjoint and non-empty subsets of $\mathcal{H}_{N}$. Then, for
any non-trivial flow $\psi$ on $\mathcal{H}_{N}$, it holds that
\begin{equation}
\mathrm{Cap}_{N}(\mathcal{A},\,\mathcal{B})\ge\frac{1}{\|\psi\|_{N}^{2}}\Big[\sum_{\eta\in\mathcal{H}_{N}}h_{\mathcal{A},\,\mathcal{B}}(\eta)(\mathrm{div}\,\psi)(\eta)\Big]^{2}.\label{genThomeq}
\end{equation}
Moreover, the equality holds if and only if $\psi=c\Phi_{h_{\mathcal{A},\,\mathcal{B}}}$
for a non-zero constant $c$. 
\end{thm}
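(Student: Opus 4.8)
The plan is to reduce the inequality \eqref{genThomeq} to a single application of the Cauchy--Schwarz inequality in the flow inner product $\langle\cdot,\cdot\rangle_N$, taking $\Phi_{h_{\mathcal{A},\mathcal{B}}}$ as the reference flow. Write $h = h_{\mathcal{A},\mathcal{B}}$ and $\Phi = \Phi_h$ for brevity. By Proposition \ref{flowDiri} we have $\|\Phi\|_N^2 = \mathrm{Cap}_N(\mathcal{A},\mathcal{B})$, so the target inequality rearranges to
\[
\bigg[\sum_{\eta\in\mathcal{H}_N} h(\eta)(\mathrm{div}\,\psi)(\eta)\bigg]^2 \le \|\psi\|_N^2\,\|\Phi\|_N^2 .
\]
Hence it suffices to show that the bracketed quantity equals $\langle\psi,\Phi\rangle_N$, since then the displayed inequality is exactly Cauchy--Schwarz for $\langle\cdot,\cdot\rangle_N$, and the equality case for Cauchy--Schwarz gives the stated rigidity ($\psi = c\Phi$ for some scalar $c$, which must be nonzero since $\psi$ is non-trivial).

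The key step is therefore the identity $\langle\psi,\Phi\rangle_N = \sum_{\eta} h(\eta)(\mathrm{div}\,\psi)(\eta)$. First I would compute, directly from \eqref{flnormdef} and the definition \eqref{fcnflow} of $\Phi = \Phi_h$,
\[
\langle\psi,\Phi\rangle_N = \frac{1}{2}\sum_{\eta,\zeta:\,\mathbf{q}_N(\eta,\zeta)>0} \frac{\psi(\eta,\zeta)\,\mu_N(\eta)\mathbf{q}_N(\eta,\zeta)\{h(\eta)-h(\zeta)\}}{\mu_N(\eta)\mathbf{q}_N(\eta,\zeta)} = \frac{1}{2}\sum_{\eta,\zeta}\psi(\eta,\zeta)\{h(\eta)-h(\zeta)\},
\]
where the sum is over ordered pairs with $\mathbf{q}_N(\eta,\zeta)>0$; note the $\mu_N\mathbf{q}_N$ factors cancel cleanly. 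Then I would split $h(\eta)-h(\zeta)$ and use the anti-symmetry of $\psi$ to fold the two resulting sums together, obtaining $\sum_{\eta,\zeta}\psi(\eta,\zeta)h(\eta) = \sum_{\eta} h(\eta)\sum_{\zeta}\psi(\eta,\zeta) = \sum_\eta h(\eta)(\mathrm{div}\,\psi)(\eta)$, using Definition \ref{flowdef}(3); the relabeling $\eta\leftrightarrow\zeta$ in the second half together with $\psi(\zeta,\eta) = -\psi(\eta,\zeta)$ absorbs the factor $1/2$. This gives the desired identity.

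There is no genuine obstacle here: the argument is a bookkeeping computation plus one Cauchy--Schwarz, and it mirrors the standard proof that the equilibrium flow attains the Thomson variational principle. The only point requiring mild care is that all the sums are finite (since $\mathcal{H}_N$ is finite) so the rearrangements are unconditionally justified, and that the equality characterization for Cauchy--Schwarz in $\langle\cdot,\cdot\rangle_N$ needs $\|\Phi\|_N^2 = \mathrm{Cap}_N(\mathcal{A},\mathcal{B}) > 0$, which holds because $\mathcal{A}$ and $\mathcal{B}$ are non-empty and disjoint and the chain is irreducible, so $h$ is non-constant and $D_N(h)>0$. One should also remark that no reversibility beyond what is already built into $\mu_N$ and the symmetric form \eqref{flnormdef} is used; this is why the statement, like the underlying Thomson principle, extends to the non-reversible setting as noted after Theorem \ref{Diri-Thom}.
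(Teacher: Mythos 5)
Your proposal is correct and follows essentially the same route as the paper: identify the bracketed sum with $\left\langle \Phi_{h_{\mathcal{A},\mathcal{B}}},\psi\right\rangle _{N}$, apply Cauchy--Schwarz, and use $\left\Vert \Phi_{h_{\mathcal{A},\mathcal{B}}}\right\Vert _{N}^{2}=\mathrm{Cap}_{N}(\mathcal{A},\mathcal{B})$ from Proposition \ref{flowDiri}. The only difference is that you prove the summation-by-parts identity directly, whereas the paper cites it from \cite[Proposition 5.1(3)]{Seo NRZRP}; your inline verification (and the remark that nontriviality of $\Phi_{h_{\mathcal{A},\mathcal{B}}}$ is needed for the equality case) is accurate.
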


\begin{proof}
By \cite[Proposition 5.1(3)]{Seo NRZRP} and the Cauchy--Schwarz
inequality, 
\begin{align*}
\Big[\sum_{\eta\in\mathcal{H}_{N}}h_{\mathcal{A},\,\mathcal{B}}(\eta)(\mathrm{div}\,\psi)(\eta)\Big]^{2} & =\langle\Phi_{h_{\mathcal{A},\,\mathcal{B}}},\,\psi\rangle_{N}^{2}\\
 & \le\|\Phi_{h_{\mathcal{A},\,\mathcal{B}}}\|_{N}^{2}\times\|\psi\|_{N}^{2}=\mathrm{Cap}_{N}(\mathcal{A},\,\mathcal{B})\times\|\psi\|_{N}^{2}.
\end{align*}
Because $\psi$ is non-trivial, we divide both sides by $\|\psi\|_{N}^{2}$
to obtain the result. The equality condition is straightforward. 
\end{proof}
\begin{rem}
If $\psi$ satisfies the condition \eqref{Diri-Thomeq2}, then Theorem
\ref{genThom} is equivalent to Theorem \ref{Diri-Thom}(2), as 
\[
\sum_{\eta\in\mathcal{H}_{N}}h_{\mathcal{A},\,\mathcal{B}}(\eta)(\mathrm{div}\,\psi)(\eta)=\sum_{\eta\in\mathcal{A}}(\mathrm{div}\,\psi)(\eta)=(\mathrm{div}\,\psi)(\mathcal{A})=1.
\]
Thus, using Theorem \ref{genThom}, we may choose a test flow $\psi$
that does not satisfy the strict constraint \eqref{Diri-Thomeq2}.
This point is demonstrated in Sections \ref{s-LBspl} and \ref{s-LB}. 
\end{rem}

\subsection{Martingale approach and outline of proof}

A sharp estimate of the capacities can be used to calculate the transition
rates of the process traced on $\mathcal{E}_{N}^{\star}$, employing
the following formula from \cite[Lemma 6.8]{BL TM}: We denote by
$\mathbf{q}_{N}^{\star}:\mathcal{E}_{N}^{\star}\times\mathcal{E}_{N}^{\star}\to[0,\,\infty)$
the transition rate of the trace process $\eta_{N}^{\star}(\cdot)$,
and we define the mean transition rate $\mathbf{r}_{N}^{\star}:\llbracket1,\,\kappa_{\star}\rrbracket\times\llbracket1,\,\kappa_{\star}\rrbracket\to[0,\,\infty)$
by $\mathbf{r}_{N}^{\star}(i,\,i)=0$ and 
\[
\mathbf{r}_{N}^{\star}(i,\,j)=\frac{1}{\mu_{N}(\mathcal{E}_{N}(S_{i}^{(2)}))}\sum_{\eta\in\mathcal{E}_{N}(S_{i}^{(2)})}\mu_{N}(\eta)\sum_{\zeta\in\mathcal{E}_{N}(S_{j}^{(2)})}\mathbf{q}_{N}^{\star}(\eta,\,\zeta)\text{ for }i,\,j\in\llbracket1,\,\kappa_{\star}\rrbracket.
\]
Then, for $i,j\in\llbracket1,\,\kappa_{\star}\rrbracket$,
\begin{equation}
\begin{aligned}\mu_{N} & (\mathcal{E}_{N}^{(2)}(i))\mathbf{r}_{N}^{\star}(i,\,j)=\frac{1}{2}\big[\mathrm{Cap}_{N}(\mathcal{E}_{N}^{(2)}(i),\,\mathcal{E}_{N}^{\star}\setminus\mathcal{E}_{N}^{(2)}(i))\\
 & +\mathrm{Cap}_{N}(\mathcal{E}_{N}^{(2)}(j),\,\mathcal{E}_{N}^{\star}\setminus\mathcal{E}_{N}^{(2)}(j))-\mathrm{Cap}_{N}(\mathcal{E}_{N}^{(2)}(\{i,\,j\}),\,\mathcal{E}_{N}^{\star}\setminus\mathcal{E}_{N}^{(2)}(\{i,\,j\}))\big].
\end{aligned}
\label{B-L lem}
\end{equation}
The asymptotics on $\mathbf{r}_{N}^{\star}(\cdot,\,\cdot)$ is the
main ingredient to describe the metastable behavior. This is explained
in the following proposition, which is a consequence of the martingale
approach developed in \cite{BL TM}. We refer to \cite[Theorem 2.7]{BL TM}
for its proof. 
\begin{prop}
\label{B-L conv}Suppose that there exists a sequence $\{\theta_{N}\}_{N\ge1}$
of positive real numbers such that 
\begin{equation}
\lim_{N\to\infty}\theta_{N}\mathbf{r}_{N}^{\star}(i,\,j)=a(i,\,j)\text{ for all }i,\,j\in\llbracket1,\,\kappa_{\star}\rrbracket,\label{H0}
\end{equation}
for some $a:\llbracket1,\,\kappa_{\star}\rrbracket\times\llbracket1,\,\kappa_{\star}\rrbracket\to[0,\,\infty)$.
Moreover, suppose that the following estimate holds for each $1\le i\le\kappa_{\star}$:
\begin{equation}
\lim_{N\to\infty}\frac{\mathrm{Cap}_{N}(\mathcal{E}_{N}^{(2)}(i),\,\mathcal{E}_{N}^{\star}\setminus\mathcal{E}_{N}^{(2)}(i))}{\inf_{\eta,\,\zeta\in\mathcal{E}_{N}^{(2)}(i)}\mathrm{Cap}_{N}(\{\eta\},\,\{\zeta\})}=0.\label{H1}
\end{equation}
Then, for each $1\le i\le\kappa_{\star}$, the following statements
hold.
\begin{enumerate}
\item $\mathcal{E}_{N}^{(2)}(i)$ thermalizes before reaching another metastable
set, i.e., 
\[
\lim_{N\to\infty}\inf_{\eta,\,\zeta\in\mathcal{E}_{N}^{(2)}(i)}\mathbb{P}_{\eta}[\tau_{\{\zeta\}}<\tau_{\mathcal{E}_{N}^{\star}\setminus\mathcal{E}_{N}^{(2)}(i)}]=1.
\]
\item For each $1\le i\le\kappa_{\star}$, the law of the rescaled process
$\{Y_{N}(\theta_{N}t)\}_{t\ge0}$ starting at $i$ converges (with
respect to the Skorokhod topology) on the path space $D([0,\,\infty);\,\llbracket1,\,\kappa_{\star}\rrbracket)$
to the law of the Markov process on $\llbracket1,\,\kappa_{\star}\rrbracket$
starting at $i$ with transition rates $a(\cdot,\,\cdot)$. 
\end{enumerate}
\end{prop}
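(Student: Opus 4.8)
The plan is to recognise Proposition~\ref{B-L conv} as a specialisation of the Beltr\'an--Landim martingale approach. Since the statement concerns the \emph{trace} chain $\eta_N^{\star}(\cdot)$ on $\mathcal{E}_N^{\star}=\bigcup_{i}\mathcal{E}_N^{(2)}(i)$ --- itself an irreducible continuous-time Markov chain, reversible with respect to $\mu_N(\cdot\mid\mathcal{E}_N^{\star})$, with transition rate $\mathbf{q}_N^{\star}$ --- there is no ``bad set'' outside the wells to be treated here, and the two hypotheses are exactly the inputs of the general machinery: \eqref{H1} controls the internal geometry of the wells and forces fast equilibration inside each $\mathcal{E}_N^{(2)}(i)$, while \eqref{H0} fixes the time scale $\theta_N$; the averaged rates $\mathbf{r}_N^{\star}$ are ultimately accessible through the capacity identity \eqref{B-L lem}, but for the present statement \eqref{H0} is taken as given.

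For part~(1) I would use the standard correspondence between hitting probabilities and capacities. Fix $i$, put $\mathcal{R}:=\mathcal{E}_N^{\star}\setminus\mathcal{E}_N^{(2)}(i)$, and let $\tau_{\eta}^{+}$ be the first return time to $\eta$. For $\mathcal{C}\subseteq\mathcal{E}_N^{\star}$ one has $\mathbb{P}_{\eta}[\tau_{\mathcal{C}}<\tau_{\eta}^{+}]=\mathrm{Cap}_N(\{\eta\},\mathcal{C})/c_N(\eta)$, where $c_N(\eta)>0$ depends only on $\eta$ (and the capacity of the trace chain coincides with $\mathrm{Cap}_N$ on subsets of $\mathcal{E}_N^{\star}$). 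Decomposing the paths of $\eta_N^{\star}(\cdot)$ into successive excursions away from $\eta$ then gives, for $\eta,\zeta\in\mathcal{E}_N^{(2)}(i)$,
\[
\mathbb{P}_{\eta}\big[\tau_{\mathcal{R}}<\tau_{\{\zeta\}}\big]\ \le\ \frac{\mathbb{P}_{\eta}[\tau_{\mathcal{R}}<\tau_{\eta}^{+}]}{\mathbb{P}_{\eta}[\tau_{\{\zeta\}}<\tau_{\eta}^{+}]}\ =\ \frac{\mathrm{Cap}_N(\{\eta\},\mathcal{R})}{\mathrm{Cap}_N(\{\eta\},\{\zeta\})}\ \le\ \frac{\mathrm{Cap}_N\big(\mathcal{E}_N^{(2)}(i),\mathcal{R}\big)}{\inf_{\eta',\zeta'\in\mathcal{E}_N^{(2)}(i)}\mathrm{Cap}_N(\{\eta'\},\{\zeta'\})},
\]
where the last step is monotonicity of the capacity in its first argument. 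Taking the supremum over $\eta,\zeta$ and invoking \eqref{H1} yields $\sup_{\eta,\zeta\in\mathcal{E}_N^{(2)}(i)}\mathbb{P}_{\eta}[\tau_{\mathcal{R}}<\tau_{\{\zeta\}}]\to0$, which is part~(1); this is of course trivial when $\mathcal{E}_N^{(2)}(i)$ is a singleton.

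For part~(2) I would run the usual martingale-plus-tightness argument. Writing $\mathcal{L}_N^{\star}$ for the generator of $\eta_N^{\star}(\cdot)$ and $g_N:=f\circ\Psi_{2,N}$ for $f:\llbracket1,\kappa_{\star}\rrbracket\to\mathbb{R}$, Dynkin's formula makes
\[
M_N^{f}(t)\ =\ f\big(Y_N(\theta_N t)\big)-f\big(Y_N(0)\big)-\int_0^{t}\theta_N\,(\mathcal{L}_N^{\star}g_N)\big(\eta_N^{\star}(\theta_N s)\big)\,ds
\]
a martingale, and for $\eta\in\mathcal{E}_N^{(2)}(i)$ one computes $(\mathcal{L}_N^{\star}g_N)(\eta)=\sum_{j\ne i}(f(j)-f(i))\sum_{\zeta\in\mathcal{E}_N^{(2)}(j)}\mathbf{q}_N^{\star}(\eta,\zeta)$, whose average against $\mu_N(\cdot\mid\mathcal{E}_N^{(2)}(i))$ is $\sum_{j\ne i}(f(j)-f(i))\mathbf{r}_N^{\star}(i,j)=:\psi_N(\eta)$ (defined well by well). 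The heart of the proof is the replacement (equilibration) estimate
\[
\lim_{N\to\infty}\ \sup_{\eta\in\mathcal{E}_N^{\star}}\ \mathbb{E}_{\eta}\bigg[\bigg|\int_0^{t}\theta_N\Big((\mathcal{L}_N^{\star}g_N)\big(\eta_N^{\star}(\theta_N s)\big)-\psi_N\big(\eta_N^{\star}(\theta_N s)\big)\Big)\,ds\bigg|\bigg]=0,
\]
which expresses that, on the scale $\theta_N$, the chain equilibrates inside each well --- at a rate large compared with the inter-well rate --- before its next inter-well jump, so that the time-integrated local drift may be replaced by its equilibrium value; this is exactly where \eqref{H1}, equivalently part~(1), enters, together with an elementary bound on the total exit rate from a well. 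I expect this replacement estimate to be the main obstacle, since it is the step that converts the non-Markovian label process $Y_N$ into an approximately Markovian one. Granting it, $M_N^{f}$ agrees up to an $L^{1}$-negligible term with the expression obtained by replacing $\theta_N\mathcal{L}_N^{\star}g_N$ by $\theta_N\psi_N$, and by \eqref{H0} one has $\theta_N\psi_N(\eta)\to\sum_{j\ne i}(f(j)-f(i))a(i,j)$ on $\mathcal{E}_N^{(2)}(i)$, i.e., the generator with jump rates $a(\cdot,\cdot)$ applied to $f$ at $i$.

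It remains to prove tightness and to identify the limit. Tightness of the laws of $\{Y_N(\theta_N t)\}_{t\ge0}$ in $D([0,\infty);\llbracket1,\kappa_{\star}\rrbracket)$ follows from Aldous's criterion once one checks, again from \eqref{H0} and the excursion structure used in part~(1), that the expected number of inter-well transitions of $Y_N$ over any bounded rescaled time interval stays bounded. Every subsequential limit then solves the martingale problem for the generator with jump rates $a(\cdot,\cdot)$; since $\llbracket1,\kappa_{\star}\rrbracket$ is finite this martingale problem is well posed, so the limit is the announced (possibly reducible) Markov chain, the convergence holds along the full sequence, and it is uniform in the initial configuration inside the starting well by part~(1).
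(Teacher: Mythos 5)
Your proposal is correct in approach and matches the paper's treatment: the paper does not prove Proposition \ref{B-L conv} itself but quotes it as an instance of the Beltr\'an--Landim martingale machinery, citing \cite[Theorem 2.7]{B-L TM}, and your sketch is precisely an outline of that cited proof --- part (1) via the renewal/capacity bound $\mathbb{P}_{\eta}[\tau_{\mathcal{R}}<\tau_{\{\zeta\}}]\le\mathrm{Cap}_{N}(\{\eta\},\mathcal{R})/\mathrm{Cap}_{N}(\{\eta\},\{\zeta\})$ together with monotonicity and \eqref{H1}, and part (2) via the Dynkin martingale, replacement of the local drift by its well-averaged value, Aldous tightness, and uniqueness of the finite-state martingale problem. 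The one caveat is that the replacement estimate, which you correctly single out as the heart of the argument, is asserted rather than proved, but that is exactly the content of the result the paper itself invokes by citation, so relative to the paper's own proof your outline contains no gap.
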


To prove statement (3) of Theorem \ref{main}, we also must know the
mode of convergence of finite-dimensional distributions of the rescaled
process $\eta_{N}(\theta_{N}\cdot)$. \cite[Proposition 2.1]{LanLouMou}
provides a simple approach of proving this result. 
\begin{prop}
\label{ConvFDD}Suppose that statement (2) of Theorem \ref{main}
holds, and that the process spends negligible time outside the metastable
valleys, i.e., for $t>0$, 
\[
\lim_{N\to\infty}\sup_{\eta\in\mathcal{E}_{N}^{\star}}\mathbb{E}_{\eta}\Big[\int_{0}^{t}\mathbbm{1}\{\eta_{N}(\theta_{N,\,2}s)\notin\mathcal{E}_{N}^{\star}\}ds\Big]=0.
\]
In addition, suppose that the following holds:
\begin{equation}
\lim_{\delta\to0}\limsup_{N\to\infty}\sup_{2\delta\le s\le3\delta}\sup_{\eta\in\mathcal{E}_{N}^{\star}}\mathbb{P}_{\eta}[\eta_{N}(\theta_{N,\,2}s)\notin\mathcal{E}_{N}^{\star}]=0.\label{ConvFDDeq}
\end{equation}
Then, the rescaled original process $\eta_{N}(\theta_{N,\,2}\cdot)$
converges to $X_{\textup{second}}(\cdot)$ in the sense of finite-dimensional
marginal distributions, i.e., for all $0\le t_{1}<\cdots<t_{k}$,
$i\in\llbracket1,\,\kappa_{\star}\rrbracket$, $n\in\llbracket1,\,\mathfrak{n}(i)\rrbracket$,
and $A_{1},\,\dots,\,A_{k}\subseteq\llbracket1,\,\kappa_{\star}\rrbracket$,
it holds that 
\begin{align*}
\lim_{N\to\infty} & \mathbb{P}_{\xi^{x_{i,\,n}}}\big[\eta_{N}(\theta_{N,\,2}t_{1})\in\mathcal{E}_{N}^{(2)}(A_{1}),\ \dots,\ \eta_{N}(\theta_{N,\,2}t_{k})\in\mathcal{E}_{N}^{(2)}(A_{k})\big]\\
 & =\mathbf{P}_{i}[X_{\textup{second}}(t_{1})\in A_{1},\ \dots,\ X_{\textup{second}}(t_{k})\in A_{k}],
\end{align*}
where $\mathbf{P}_{i}$ denotes the law of $X_{\textup{second}}(\cdot)$
starting at $i$.
\end{prop}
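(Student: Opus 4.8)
The claim is immediate from the general criterion \cite[Proposition 2.1]{L-L-M} once the three items required by that criterion are matched with the hypotheses at hand: the Skorokhod convergence of the projected trace process, supplied by statement (2) of Theorem \ref{main}; the negligible-time condition relative to $\mathcal{E}_N^\star$, which is the first displayed hypothesis; and the ``no instantaneous escape'' bound, which is \eqref{ConvFDDeq}. Thus the plan is not to prove anything new, but to (a) cast the present objects into the framework of \cite{L-L-M}, (b) check that the limit at the fixed times behaves as that framework requires, and (c) invoke the criterion.

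For (a), I would observe that $\{Y_N(\theta_{N,2}t)\}_{t\ge0}=\{\Psi_{2,N}(\eta_N^\star(\theta_{N,2}t))\}_{t\ge0}$ is exactly the projection, under the order parameter $\Psi_{2,N}$, of the trace $\eta_N^\star(\cdot)$ of the original chain on the \emph{full} metastable set $\mathcal{E}_N^\star$, and that $\Psi_{2,N}^{-1}(A)=\mathcal{E}_N^{(2)}(A)$ for $A\subseteq\llbracket1,\kappa_\star\rrbracket$; with this dictionary, statement (2) of Theorem \ref{main} is precisely the hypothesis ``$Y_N(\theta_{N,2}\cdot)\Rightarrow X_{\mathrm{second}}(\cdot)$ in $D([0,\infty);\llbracket1,\kappa_\star\rrbracket)$'' demanded by \cite[Proposition 2.1]{L-L-M}. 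I would also note that starting the original chain at the specific configuration $\xi_N^{x_{i,n}}$ is admissible, since $\Psi_{2,N}(\xi_N^{x_{i,n}})=i$ and, by the thermalization assertion Theorem \ref{main}(1), the limit is insensitive to which configuration of $\mathcal{E}_N(S_i^{(2)})$ serves as the initial state.

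For (b), since $X_{\mathrm{second}}(\cdot)$ is a finite-state Markov chain, almost surely it has no jump at any prescribed deterministic time; hence $t_1,\dots,t_k$ are almost-sure continuity points of the Skorokhod limit, so that joint evaluation at $t_1,\dots,t_k$ is a.s.\ continuous at the limit path. For (c), the criterion then applies verbatim. The mechanism behind it — which I would only outline — is the time-change identity $\eta_N(t)=\eta_N^\star(T^{\mathcal{E}_N^\star}(t))$, valid on $\{\eta_N(t)\in\mathcal{E}_N^\star\}$: the negligible-time hypothesis forces $T^{\mathcal{E}_N^\star}(\theta_{N,2}t)/\theta_{N,2}\to t$ in probability, uniformly on compact $t$-intervals, so that on $\{\eta_N(\theta_{N,2}t_j)\in\mathcal{E}_N^\star\}$ one has $\Psi_{2,N}(\eta_N(\theta_{N,2}t_j))=Y_N(\theta_{N,2}t_j+o(\theta_{N,2}))$; \eqref{ConvFDDeq} guarantees these events hold simultaneously for $j=1,\dots,k$ with probability tending to $1$; and the Skorokhod convergence together with the continuity-point remark passes to the limit, producing the joint law $\mathbf{P}_i[X_{\mathrm{second}}(t_1)\in A_1,\dots,X_{\mathrm{second}}(t_k)\in A_k]$.

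I expect the only point needing care to be the bookkeeping around the non-injective order parameter $\Psi_{2,N}$: one must make sure that \cite[Proposition 2.1]{L-L-M} is applied to the trace on \emph{all} of $\mathcal{E}_N^\star$ and to the projection onto $\llbracket1,\kappa_\star\rrbracket$, so that its ``negligible time'' and ``no escape'' hypotheses line up with the two displays above (which refer to $\mathcal{E}_N^\star$, not to a single component $\mathcal{E}_N^{(2)}(i)$), and so that its limiting object is the chain $X_{\mathrm{second}}(\cdot)$ on $\llbracket1,\kappa_\star\rrbracket$ rather than a process on $S_\star$. The genuinely substantive work — establishing the negligible-time bound and \eqref{ConvFDDeq}, which rely on the sharp capacity asymptotics and on parts (1)--(2) of Theorem \ref{main} — lies outside this proof.
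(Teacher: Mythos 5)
Your proposal is correct and matches the paper's treatment: the paper offers no independent argument for this proposition, but simply casts the trace process on $\mathcal{E}_{N}^{\star}$, the projection $\Psi_{2,N}$, and the two hypotheses into the framework of \cite[Proposition 2.1]{L-L-M} and invokes that criterion, exactly as you do. Your additional sketch of the time-change mechanism behind the criterion is extra (and harmless) detail beyond what the paper records.
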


The remainder of this study is organized as follows. In Section \ref{s-HT},
we provide some preliminaries regarding hitting times on the tubes.
These are used in Sections \ref{s-LBspl} and \ref{s-LB}. Subsequently,
in Sections \ref{s-UBspl} and \ref{s-LBspl}, we calculate the upper
and lower bounds for the capacities, respectively, in the simple case
of Theorem \ref{mainspl}. This procedure is performed by the variational
principles given in Theorems \ref{Diri-Thom} and \ref{genThom}.
In Sections \ref{s-UB} and \ref{s-LB} we provide the estimate of
the capacities in the general case of Theorem \ref{main}. Then, we
prove the condition \eqref{H1} in the general case in Section \ref{s-H1}.
Finally, in Section \ref{s-PMT}, we use the estimates given in Propositions
\ref{B-L conv}, and \ref{ConvFDD} to prove our main result, stated
in Theorem \ref{main}. This simultaneously proves Theorem \ref{mainspl}
as well.

\section{\label{s-HT}Hitting Times on Tubes}

We recall crucial results from \cite{KS NRIP}, which provide sharp
estimates of hitting times on the tubes. These are used in Sections
\ref{s-LBspl} and \ref{s-LB} to compute the asymptotic equilibrium
potential (Lemmas \ref{lem6spl} and \ref{lem6}).

To state the results, we first define some relevant subsets of $\mathcal{H}_{N}$.
The notation is mainly inherited from \cite{KS NRIP}. 
\begin{defn}
\label{tubedef}
\begin{enumerate}
\item For every subset $R$ of $S$, define the \emph{$R$-tube} $\mathcal{A}_{N}^{R}$
as
\begin{equation}
\mathcal{A}_{N}^{R}=\{\eta\in\mathcal{H}_{N}:\eta_{x}=0\text{ for all }x\in S\setminus R\}.\label{tubedefeq}
\end{equation}
For example, $\mathcal{A}_{N}^{S}=\mathcal{H}_{N}$ and $\mathcal{A}_{N}^{\{x\}}=\mathcal{E}_{N}^{x}$.
We may write the superscripts of $\mathcal{A}_{N}^{R}$ by the explicit
elements of $R$ without commas.
\item Especially, if $R=\{x,\,y\}$, we write
\[
\mathcal{A}_{N}^{xy}=\mathcal{A}_{N}^{R}=\{\eta\in\mathcal{H}_{N}:\eta_{x}+\eta_{y}=N\}.
\]
\item For $x,\,y\in S$ and $0\le i\le N$, define the configuration $\xi_{i}^{xy}$
by 
\[
(\xi_{i}^{xy})_{z}=\begin{cases}
i & \text{if }z=x,\\
N-i & \text{if }z=y,\\
0 & \text{otherwise},
\end{cases}
\]
such that $\mathcal{A}_{N}^{xy}=\{\xi_{0}^{xy},\,\xi_{1}^{xy},\,\dots,\,\xi_{N}^{xy}\}$.
Note that $\xi_{N}^{xy}=\xi^{x}$ and $\xi_{0}^{xy}=\xi^{y}$.
\item Finally, for $x,\,y\in S$, define 
\[
\widehat{\mathcal{A}}_{N}^{R}=\{\eta\in\mathcal{A}_{N}^{R}:\eta_{x}\ge1\text{ for all }x\in R\}.
\]
Clearly, $\widehat{\mathcal{A}}_{N}^{xy}=\{\xi_{1}^{xy},\,\dots,\,\xi_{N-1}^{xy}\}$
and $\mathcal{A}_{N}^{xy}=\widehat{\mathcal{A}}_{N}^{xy}\cup\{\xi^{x},\,\xi^{y}\}$.
\item Generally, if $R=\{a_{1},\,\dots,\,a_{r}\}$, then we denote by $\xi_{n_{1},\,\dots,\,n_{r-1}}^{a_{1}a_{2}\cdots a_{r}}\in\mathcal{A}_{N}^{R}$
the element which satisfies
\begin{equation}
\xi_{n_{1},\,\dots,\,n_{r-1}}^{a_{1}a_{2}\cdots a_{r}}(a)=\begin{cases}
n_{j} & \text{if }a=a_{j}\text{ for }j\in\llbracket1,\,r-1\rrbracket,\\
N-(n_{1}+\cdots+n_{r-1}) & \text{if }a=a_{r},\\
0 & \text{otherwise}.
\end{cases}\label{e_xi-notation}
\end{equation}
\end{enumerate}
\end{defn}

The tube $\mathcal{A}_{N}^{R}$ with $R=\{x,\,y\}$ has the advantage
that it is an one-dimensional bridge of typical paths between two
valleys, $\xi^{x}$ and $\xi^{y}$. More precisely, the following
estimate from \cite[Lemma 4.7]{KS NRIP} holds.
\begin{lem}
\label{tubetrest}Suppose that $E$ is a subset of the path space
which depends only on the hitting times of subsets of $\mathcal{H}_{N}\setminus\widehat{\mathcal{A}}_{N}^{xy}$.
Moreover, suppose that $x,\,y\in S$ satisfy $r(x,\,y)+r(y,\,x)>0$.
Then, there is a fixed constant $C>0$ such that 
\[
\Big|\mathbb{P}_{\xi_{i}^{xy}}[E]-\frac{r(x,\,y)}{r(x,\,y)+r(y,\,x)}\mathbb{P}_{\xi_{i-1}^{xy}}[E]-\frac{r(y,\,x)}{r(x,\,y)+r(y,\,x)}\mathbb{P}_{\xi_{i+1}^{xy}}[E]\Big|\le C\frac{d_{N}N}{i(N-i)}
\]
for all $1\le i\le N-1$. 
\end{lem}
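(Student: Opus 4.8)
The plan is to condition on the first jump of the process starting from $\zeta_i^{x,y}$ and then show that, with overwhelming probability (failure probability of order $d_N N/(i(N-i))$), this first jump lands on one of the two neighbouring configurations $\zeta_{i-1}^{x,y}$ or $\zeta_{i+1}^{x,y}$, and moreover that it does so with the stated transition probabilities $r(x,y)/(r(x,y)+r(y,x))$ and $r(y,x)/(r(x,y)+r(y,x))$ up to the same error. Write $\zeta = \zeta_i^{x,y}$, so $\zeta_x = N-i$ and $\zeta_y = i$. The total exit rate from $\zeta$ is $\sum_{u,v\in S} \zeta_u(d_N + \zeta_v) r(u,v)$. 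The ``good'' transitions are $\zeta\to\sigma^{x,y}\zeta = \zeta_{i+1}^{x,y}$ (rate $\zeta_x(d_N+\zeta_y) r(x,y) = (N-i)(d_N+i) r(x,y)$) and $\zeta\to\sigma^{y,x}\zeta = \zeta_{i-1}^{x,y}$ (rate $\zeta_y(d_N+\zeta_x) r(y,x) = i(d_N+N-i) r(y,x)$). All other transitions from $\zeta$ move a particle from $x$ or $y$ to a third site $z\notin\{x,y\}$; since $\zeta_z = 0$ such a transition has rate $\zeta_x d_N r(x,z)$ or $\zeta_y d_N r(y,z)$, hence the total ``bad'' rate is at most $d_N[(N-i)\sum_{z} r(x,z) + i \sum_z r(y,z)] \le C d_N N$ for a constant $C$ depending only on $r$. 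Meanwhile the good rate is at least $\bigl[(N-i) i r(x,y) + i(N-i) r(y,x)\bigr] = i(N-i)(r(x,y)+r(y,x)) \ge c\, i(N-i)$ with $c = r(x,y)+r(y,x) > 0$ by hypothesis.

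Next I would use the strong Markov property at this first jump. Since $E$ depends only on hitting times of subsets of $\mathcal{H}_N\setminus\widehat{\mathcal{A}}_N^{x,y}$ and the starting configuration $\zeta_i^{x,y}$ lies in $\widehat{\mathcal{A}}_N^{x,y}$ for $1\le i\le N-1$, the event $E$ is insensitive to the precise trajectory while the process remains inside $\widehat{\mathcal{A}}_N^{x,y}$; in particular conditioning on the first jump is legitimate and
\[
\mathbb{P}_{\zeta_i^{x,y}}[E] = \sum_{\zeta'} p(\zeta_i^{x,y},\zeta') \, \mathbb{P}_{\zeta'}[E],
\]
where $p(\zeta,\zeta') = \mathbf{q}_N(\zeta,\zeta')/(\text{total exit rate})$ is the embedded-chain transition probability. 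Splitting the sum into the two good terms and the bad remainder, and using $0\le \mathbb{P}_{\zeta'}[E]\le 1$, one gets
\[
\Bigl| \mathbb{P}_{\zeta_i^{x,y}}[E] - p(\zeta,\zeta_{i+1}^{x,y})\mathbb{P}_{\zeta_{i+1}^{x,y}}[E] - p(\zeta,\zeta_{i-1}^{x,y})\mathbb{P}_{\zeta_{i-1}^{x,y}}[E] \Bigr| \le \sum_{\zeta'\ \text{bad}} p(\zeta,\zeta') \le \frac{C d_N N}{c\, i(N-i)},
\]
using the rate bounds above (the denominator is the total exit rate, which is at least the good rate $\ge c\,i(N-i)$).

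The remaining step is to replace the embedded probabilities $p(\zeta,\zeta_{i\pm1}^{x,y})$ by the clean constants $r(x,y)/(r(x,y)+r(y,x))$ and $r(y,x)/(r(x,y)+r(y,x))$. Writing $\mathrm{good} = i(N-i)(r(x,y)+r(y,x))$ and $\mathrm{tot} = \mathrm{good} + \mathrm{bad}$ with $\mathrm{bad}\le C d_N N$, and also comparing $\zeta_x(d_N+\zeta_y)r(x,y) = (N-i)(d_N+i)r(x,y)$ with $(N-i)\,i\,r(x,y)$ — the discrepancy being $(N-i)d_N r(x,y)\le C d_N N$ — one checks that
\[
\Bigl| p(\zeta,\zeta_{i+1}^{x,y}) - \frac{r(x,y)}{r(x,y)+r(y,x)} \Bigr| \le \frac{C d_N N}{i(N-i)},
\]
and similarly for the $\zeta_{i-1}$ term, by the elementary estimate $|a/b - a'/b'|\le (|a-a'| + (a'/b')|b-b'|)/b$ with $b = \mathrm{tot}\ge c\,i(N-i)$. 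Feeding these three comparisons into the triangle inequality from the previous display, and absorbing all constants into a single $C$, yields exactly the claimed bound. The main obstacle — though it is really just bookkeeping rather than a conceptual difficulty — is organizing the error terms so that every discrepancy (bad transitions, the $d_N$-shift in the good rates, the normalization) is uniformly controlled by $C d_N N/(i(N-i))$; the key quantitative inputs are simply that bad rates are $O(d_N N)$ while the good rate is $\gtrsim i(N-i)$, which is where both the factor $d_N$ and the factor $N/(i(N-i))$ in the bound originate.
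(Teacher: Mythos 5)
Your argument is correct and is essentially the expected one: the paper does not prove this lemma itself but imports it from \cite[Lemma 4.7]{Kim-Seo}, and your first-jump (embedded-chain) decomposition, balancing bad exit rates of order $d_{N}N$ against a good rate of at least $i(N-i)(r(x,y)+r(y,x))$, yields exactly the stated bound. The one step to make explicit is that the identity $\mathbb{P}_{\zeta_{i}^{x,y}}[E]=\sum_{\zeta'}p(\zeta_{i}^{x,y},\zeta')\mathbb{P}_{\zeta'}[E]$ requires $E$ to be insensitive to the random holding time at $\zeta_{i}^{x,y}$, i.e., one uses that events depending only on hitting times of sets disjoint from $\widehat{\mathcal{A}}_{N}^{x,y}$ are (as in the paper's examples) order/comparison-type events unaffected by prepending an in-tube initial segment, which is precisely how the hypothesis on $E$ enters.
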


\begin{rem}
In the above lemma, typical examples of subsets $E$ are the following.
\[
\big\{\tau_{\mathcal{E}_{N}^{x}}<\tau_{\mathcal{E}_{N}^{y}}\big\},\ \big\{\tau_{\mathcal{E}_{N}^{y}}=\tau_{\mathcal{E}_{N}(A)}\big\}\text{ for }A\subseteq S,\text{ and }\big\{\tau_{\mathcal{E}_{N}^{x}}=\tau_{\mathcal{H}_{N}\setminus\widehat{\mathcal{A}}_{N}^{xy}}\big\}.
\]
\end{rem}

Lemma \ref{tubetrest} can be iterated to formulate $\mathbb{P}_{\xi_{i}^{xy}}[E]$,
$1\le i\le N-1$ in terms of the boundary values $\mathbb{P}_{\xi^{x}}[E]$
and $\mathbb{P}_{\xi^{y}}[E]$. This imperatively relies on the fact
that the system is approximated to be one-dimensional.

We conclude this section with the following lemma, which estimates
the equilibrium potential on one-dimensional tubes. \emph{This lemma
is the main ingredient to estimate the divergence of the test flow
in Lemma \ref{lem6spl}.}
\begin{lem}
\label{eqpotest}Suppose that $A$ and $B$ are two disjoint subsets
of $S$. Further, assume that $a\in A$, $b\in B$, and $c\in S$
satisfy 
\[
r(c,\,a)>r(a,\,c)>0\text{ and }r(c,\,b)>r(b,\,c)>0.
\]
Then, we have 
\begin{equation}
\sup_{0\le i\le\lfloor N/2\rfloor}\big|h_{\mathcal{E}_{N}(A),\,\mathcal{E}_{N}(B)}(\xi_{N-i}^{ac})-1\big|=o(1)\label{eqpotesteq1}
\end{equation}
and 
\begin{equation}
\sup_{0\le i\le\lfloor N/2\rfloor}h_{\mathcal{E}_{N}(A),\,\mathcal{E}_{N}(B)}(\xi_{N-i}^{bc})=o(1).\label{eqpotesteq2}
\end{equation}
\end{lem}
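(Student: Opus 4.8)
The plan is to use Lemma \ref{tubetrest} to reduce the estimate to a one-dimensional difference inequality — the equilibrium potential restricted to a tube is, up to a controlled error, harmonic for a \emph{biased} nearest-neighbour walk — and then to trap it between explicit barriers via a discrete maximum principle. Abbreviate $h=h_{\mathcal{E}_N(A),\mathcal{E}_N(B)}$. Since no valley configuration lies in $\widehat{\mathcal{A}}_N^{a,c}$, the event $E=\{\tau_{\mathcal{E}_N(A)}<\tau_{\mathcal{E}_N(B)}\}$ depends only on hitting times of subsets of $\mathcal{H}_N\setminus\widehat{\mathcal{A}}_N^{a,c}$, so Lemma \ref{tubetrest} applied on the tube $\mathcal{A}_N^{a,c}$ yields, for $u_i:=h(\zeta_i^{a,c})$,
\[
\bigl|u_i-p\,u_{i+1}-q\,u_{i-1}\bigr|\le\epsilon_i:=\frac{Cd_NN}{i(N-i)},\qquad 1\le i\le N-1,
\]
where $p=\tfrac{r(a,c)}{r(a,c)+r(c,a)}$ and $q=\tfrac{r(c,a)}{r(a,c)+r(c,a)}$, so $p+q=1$ and, decisively, $q>p$ by the hypothesis $r(c,a)>r(a,c)$; moreover $u_0=h(\xi_N^a)=1$ and $0\le u_i\le1$. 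The same reasoning on $\mathcal{A}_N^{b,c}$ gives $\tilde u_i:=h(\zeta_i^{b,c})$ obeying the identical inequality (with $r$ evaluated at $b,c$) together with $\tilde u_0=h(\xi_N^b)=0$ and $0\le\tilde u_i\le1$. Setting $w_i:=1-u_i$ in the first case and $w_i:=\tilde u_i$ in the second, both \eqref{eqpotesteq1} and \eqref{eqpotesteq2} follow from the single claim: \emph{if $0\le w_i\le1$, $w_0=0$, and $|w_i-pw_{i+1}-qw_{i-1}|\le\epsilon_i$ on $\{1,\dots,N-1\}$ with $q>p>0$ fixed, then $\sup_{0\le i\le\lfloor N/2\rfloor}w_i=o(1)$.}

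To prove this I would argue by comparison. Write $\mathcal{L}w_i:=pw_{i+1}+qw_{i-1}-w_i$, the generator of the walk on $\{0,1,\dots\}$ that steps left with the larger probability $q$, so that the hypothesis reads $\mathcal{L}w_i=-\delta_i$ with $|\delta_i|\le\epsilon_i$. Work on the truncated interval $\{0,1,\dots,i^\ast\}$ with $i^\ast:=\lceil3N/4\rceil$, on whose interior $N-i\ge N/4$ and hence $\epsilon_i\le 4Cd_N/i$. Put $\rho:=q/p>1$ and
\[
\bar w_i:=\frac{8C}{q-p}\,d_N\log(i+1)+\frac{\rho^i-1}{\rho^{i^\ast}-1}.
\]
An elementary computation gives $\mathcal{L}[\log(i+1)]_i\le-(q-p)\log\tfrac{i+1}{i}\le-\tfrac{q-p}{2i}$ for $i\ge1$, while $i\mapsto\frac{\rho^i-1}{\rho^{i^\ast}-1}$ is $\mathcal{L}$-harmonic, so $\mathcal{L}\bar w_i\le-4Cd_N/i\le-\epsilon_i$ on the interior; also $\bar w_0=0$ and $\bar w_{i^\ast}\ge1\ge w_{i^\ast}$. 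Hence $\bar w-w$ is $\mathcal{L}$-superharmonic with nonnegative boundary values, so $w_i\le\bar w_i$ throughout $\{0,\dots,i^\ast\}$. For $i\le\lfloor N/2\rfloor$ the logarithmic term of $\bar w_i$ is $O(d_N\log N)=o(1)$ by \eqref{dNcond}, and $\frac{\rho^i-1}{\rho^{i^\ast}-1}\le2\rho^{-N/4}=o(1)$; together with $w_i\ge0$ this gives $\sup_{0\le i\le\lfloor N/2\rfloor}w_i=o(1)$, which is the claim.

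I expect the only real subtlety to be calibrating the barrier against $\epsilon_i=Cd_NN/(i(N-i))$, which is of order $d_N$ — not $d_N/N$ — near \emph{both} endpoints $i\approx1$ and $i\approx N-1$: a purely exponential barrier cannot absorb a term of that size, and a logarithmic barrier alone fails near $i=N-1$. Passing to $\{0,\dots,\lceil3N/4\rceil\}$ is the device that fixes this; it is legitimate precisely because we only need the bound on the first half and the a priori bound $0\le w\le1$ supplies the right-endpoint datum. On the truncated interval $\epsilon_i\lesssim d_N/i$, a size the logarithmic barrier handles exactly under the standing assumption $d_N\log N\to0$. The remaining points — that Lemma \ref{tubetrest} is invoked only for indices in $\{1,\dots,N-1\}$, that $\{0,\dots,\lfloor N/2\rfloor\}\subseteq\{0,\dots,i^\ast\}$, and that the minimum principle applies to $\mathcal{L}$ as the generator of a genuine Markov chain on $\{0,\dots,i^\ast\}$ — are immediate.
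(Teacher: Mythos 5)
Your proposal is correct, and it shares with the paper the decisive first step: applying Lemma \ref{tubetrest} to the event $\{\tau_{\mathcal{E}_{N}(A)}<\tau_{\mathcal{E}_{N}(B)}\}$ on the tubes $\mathcal{A}_{N}^{a,c}$ and $\mathcal{A}_{N}^{b,c}$, which reduces both \eqref{eqpotesteq1} and \eqref{eqpotesteq2} to a perturbed three-point recursion with drift toward the valley. Where you genuinely diverge is in how that recursion is handled. The paper works with the increments $\alpha_{i}=h(\zeta_{i-1}^{a,c})-h(\zeta_{i}^{a,c})$, exploits their approximate geometric structure by explicit telescoping and a double-sum estimate, and ends with a quantitative representation of $h(\zeta_{i}^{a,c})$ as a convex combination of $h(\xi_{N}^{a})$ and $h(\xi_{N}^{c})$ with error $O(d_{N})$. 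You instead run a discrete maximum-principle/barrier argument: after truncating at $i^{*}=\lceil 3N/4\rceil$ (to neutralize the $O(d_{N})$ size of the error term near $i\approx N$, using only the a priori bound $0\le h\le1$ there), you dominate $1-h$ (resp. $h$) by the supersolution $\tfrac{8C}{q-p}d_{N}\log(i+1)+\tfrac{\rho^{i}-1}{\rho^{i^{*}}-1}$; I checked that the logarithmic piece is superharmonic at the required rate $\lesssim -d_{N}/i$ on the truncated interior, that the exponential piece is exactly harmonic for $\rho=q/p>1$ (the drift direction is the right one, since $r(c,a)>r(a,c)$ and $r(c,b)>r(b,c)$), and that the boundary data ($h(\xi_{N}^{a})=1$, $h(\xi_{N}^{b})=0$, trivial bound at $i^{*}$) and the minimum principle for a nearest-neighbour chain with $p,q>0$ are used correctly. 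Your final bound $O(d_{N}\log N)+O(\rho^{-N/4})$ is slightly weaker than the paper's $O(d_{N})$, but it is still $o(1)$ under the standing assumption \eqref{dNcond}, which is all the lemma asserts; the barrier route buys robustness (no explicit approximate solution of the recursion is needed), while the paper's explicit computation buys the sharper error and a two-boundary-value formula.
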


\begin{proof}
It must be noticed that $\{\tau_{\mathcal{E}_{N}(A)}<\tau_{\mathcal{E}_{N}(B)}\}$
is a subset of the path space satisfying the assumption of Lemma \ref{tubetrest};
thus, we may apply Lemma \ref{tubetrest} to the equilibrium potential
$h_{\mathcal{E}_{N}(A),\,\mathcal{E}_{N}(B)}$.

It suffices to prove \eqref{eqpotesteq1} and \eqref{eqpotesteq2}
for $1\le i\le\lfloor\frac{N}{2}\rfloor$, as they are trivial for
$i=0$. We abbreviate $h_{\mathcal{E}_{N}(A),\,\mathcal{E}_{N}(B)}$
as $h$. Because $a\in A$ and $b\in B$, we have $h(\xi^{a})=1$
and $h(\xi^{b})=0$. Next, write $q=r(a,\,c)/r(c,\,a)<1$ and
\[
\alpha_{i}=h(\xi_{N-i+1}^{ac})-h(\xi_{N-i}^{ac})\text{ for }1\le i\le N.
\]
Then, Lemma \ref{tubetrest} implies 
\begin{equation}
\Big|\alpha_{i+1}-\frac{1}{q}\alpha_{i}\Big|\le C\frac{d_{N}N}{i(N-i)}\text{ for }1\le i\le N-1.\label{eqpotesteq3}
\end{equation}
Now, fix $1\le i\le\lfloor\frac{N}{2}\rfloor$. Because $h(\xi^{a})-h(\xi^{c})=\alpha_{1}+\cdots+\alpha_{N}$,
we may estimate,
\begin{equation}
\begin{aligned} & \Big|h(\xi^{a})-h(\xi^{c})-\frac{1-q^{N}}{q^{N-i}(1-q^{i})}(\alpha_{1}+\cdots+\alpha_{i})\Big|\\
 & =\frac{1-q}{q^{N-i}-q^{N}}\Big|\sum_{j=1}^{i}\sum_{k=i+1}^{N}(q^{N-j}\alpha_{k}-q^{N-k}\alpha_{j})\Big|.
\end{aligned}
\label{eqpotesteq4}
\end{equation}
Applying \eqref{eqpotesteq3}, the last formula is bounded by 
\[
\frac{1-q}{q^{N-i}-q^{N}}\sum_{j=1}^{i}\sum_{k=i+1}^{N}q^{N-j}\sum_{\ell=j}^{k-1}\frac{Cd_{N}N}{q^{\ell-j}\ell(N-\ell)}.
\]
By simple double counting, this is bounded from above by 
\begin{equation}
\frac{Cd_{N}N}{q^{N-i}-q^{N}}\Big(\sum_{\ell=1}^{i-1}\frac{q^{N-\ell}}{N-\ell}+\sum_{\ell=i}^{N-i}\frac{iq^{N-\ell}}{\ell(N-\ell)}+\sum_{\ell=N-i+1}^{N-1}\frac{q^{N-\ell}}{\ell}\Big).\label{eqpotesteq5}
\end{equation}
From $\alpha_{1}+\cdots+\alpha_{i}=h(\xi^{a})-h(\xi_{N-i}^{ac})$,
by \eqref{eqpotesteq4} and \eqref{eqpotesteq5}, we have
\[
\begin{aligned} & \Big|h(\xi_{N-i}^{ac})-\frac{1-q^{N-i}}{1-q^{N}}h(\xi^{a})-\frac{q^{N-i}-q^{N}}{1-q^{N}}h(\xi^{c})\Big|\\
 & \le2Cd_{N}N\Big(\frac{q^{N-i+1}(1-q)^{-1}}{N-i+1}+\frac{2q^{i}(1-q)^{-1}}{N}+\frac{(1-q)^{-1}}{N-i+1}\Big)\le16C(1-q)^{-1}d_{N}.
\end{aligned}
\]
Because $h(\xi^{a})=1$ and $0\le h(\xi^{c})\le1$, \eqref{eqpotesteq1}
follows. Moreover, by a similar computation, we deduce that 
\[
\Big|h(\xi_{N-i}^{bc})-\frac{1-\tilde{q}^{N-i}}{1-\tilde{q}^{N}}h(\xi^{b})-\frac{\tilde{q}^{N-i}-\tilde{q}^{N}}{1-\tilde{q}^{N}}h(\xi^{c})\Big|\le16C(1-\tilde{q})^{-1}d_{N},
\]
where $\tilde{q}=r(b,\,c)/r(c,\,b)<1$. Because $h(\xi^{b})=0$ and
$0\le h(\xi^{c})\le1$, we have \eqref{eqpotesteq2}.
\end{proof}

\section{\label{s-UBspl}Upper Bound for Capacities: Simple Case}

In this section, we assume Condition \ref{splcond} and establish
the upper bound for $\mathrm{Cap}_{N}(\mathcal{E}_{N}^{x_{1}},\,\mathcal{E}_{N}^{x_{2}})$.
As previously mentioned, this and the succeeding subsections have
most of the mathematical essentials for proving the general main result.
Notions from Subsection \ref{ss-stsspl} are frequently employed. 
\begin{prop}[Upper bound for capacities: Simple case]
\label{UBspl} Under the conditions of Theorem \ref{mainspl}, the
following inequality holds. 
\[
\limsup_{N\to\infty}\frac{N}{d_{N}^{2}}\mathrm{Cap}_{N}(\mathcal{E}_{N}^{x_{1}},\,\mathcal{E}_{N}^{x_{2}})\le\frac{1}{2\mathfrak{R}}.
\]
\end{prop}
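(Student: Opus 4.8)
The plan is to apply the Dirichlet principle of Theorem \ref{Diri-Thom}(1): it suffices to produce a test function $F=F_{N}:\mathcal{H}_{N}\to\mathbb{R}$ with $F\equiv1$ on $\mathcal{E}_{N}^{x_{1}}$, $F\equiv0$ on $\mathcal{E}_{N}^{x_{2}}$, and $D_{N}(F)\le\frac{1}{2\mathfrak{R}}\frac{d_{N}^{2}}{N}(1+o(1))$, and then let $N\to\infty$. Since $r(x_{1},x_{2})=0$ by \eqref{splcondeq2}, the transition is carried by a thin tube: writing $\mathcal{T}_{N}^{(p)}$ for the set of configurations with $\eta_{y_{p'}}=0$ for the index $p'\ne p$, every trajectory from $\xi_{N}^{x_{1}}$ to $\xi_{N}^{x_{2}}$ essentially stays in $\mathcal{T}_{N}^{(1)}\cup\mathcal{T}_{N}^{(2)}$, and each slab is naturally coordinatised by the reaction coordinate $k=\eta_{x_{2}}\in\llbracket0,N\rrbracket$ together with the transit occupation $\ell=\eta_{y_{p}}$, the two slabs sharing the ``axis'' $\{\zeta_{k}^{x_{1},x_{2}}:0\le k\le N\}$ of Definition \ref{tubedef}. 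On configurations outside the tube (those with $\eta_{y_{1}}\ge1$ and $\eta_{y_{2}}\ge1$) I would set $F(\eta)=\varphi_{N}(\eta_{x_{2}})$; using \eqref{muneq1}--\eqref{munprop} one checks that a configuration carrying $\ell$ particles on a $y$-site is penalised in $\mu_{N}$ by a factor of order $d_{N}m_{\star}(y_{p})^{\ell}/\ell$ relative to the corresponding $\zeta_{k}^{x_{1},x_{2}}$, so the contribution of the tube complement (and of the direct $y_{1}\leftrightarrow y_{2}$ moves) to $D_{N}(F)$ is $O(d_{N}^{3}/N)=o(d_{N}^{2}/N)$.

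The delicate point is the choice of $F$ on the slabs, which is where the landscape of the transition rates must be analysed. The naive guess $F(\eta)=\varphi_{N}(\eta_{x_{2}})$ is \emph{not} sharp: it charges only the ``downhill'' half $y_{p}\to x_{2}$ of each particle transfer, and a direct computation of its Dirichlet form produces a strictly larger constant than $\tfrac{1}{2\mathfrak{R}}$. Instead, on $\mathcal{T}_{N}^{(p)}$ I would take $F(\eta)=\varphi_{N}\big(\eta_{x_{2}}+g_{p}(\eta_{y_{p}})\big)$, where $\varphi_{N}(k)=\psi(k/N)$ for a fixed profile $\psi\in C^{2}([0,1])$ with $\psi(0)=1$, $\psi(1)=0$, and $g_{p}(0)=0$ is a small correction on the transit layers $\ell\ge1$ (depending slowly also on the level $k/N$). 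Its role: at a fixed level $k=tN$ the configurations $\zeta_{k}^{x_{1},x_{2}}$, $\zeta_{k+1}^{x_{1},x_{2}}$ and the transit configurations above them form a small explicit electrical network, and the optimal values $g_{p}(\ell)$ are precisely its equilibrium potential; by the Dirichlet principle applied to this local network, this realises the slab's effective conductance between $\zeta_{k}^{x_{1},x_{2}}$ and $\zeta_{k+1}^{x_{1},x_{2}}$, which carries the enhancement factor $(1-m_{\star}(y_{p}))^{-1}$ coming from summing the geometrically decaying contributions of the layers $\ell=1,2,\dots$ Equivalently, $F$ is a $C^{2}$-smoothed version of $h_{\mathcal{E}_{N}^{x_{1}},\mathcal{E}_{N}^{x_{2}}}$, made constant in $\eta_{x_{2}}$ off the tube. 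The profile $\psi$ is then pinned down by the one-dimensional variational problem obtained after this reduction; this is the construction of Subsection \ref{ss-testfcndefspl}, and the induced flow $\Phi_{F}$ of \eqref{fcnflow} is reused for the lower bound.

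Granting $F$, the computation of $D_{N}(F)$ from \eqref{Diridef} proceeds by grouping the double sum by the slab $p$, the level $k$, and the move type ($x_{1}\leftrightarrow y_{p}$, $y_{p}\leftrightarrow x_{2}$, $y_{p}\leftrightarrow y_{p'}$, and moves off the tube). Using $(d_{N}+n)w_{N}(n)\simeq1$ from \eqref{munprop} one obtains $\mu_{N}(\zeta_{k}^{x_{1},x_{2}})\simeq\frac{d_{N}N}{2k(N-k)}$ for $1\le k\le N-1$ together with the transit-layer penalty quoted above, and with the reversibility identity $r(x_{i},y_{p})=m_{\star}(y_{p})r(y_{p},x_{i})$ every conductance $\mu_{N}(\eta)\mathbf{q}_{N}(\eta,\zeta)$ on the tube is evaluated to leading order. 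Since $\varphi_{N}(k+a)-\varphi_{N}(k)=\frac{a}{N}\psi'(t)+O(N^{-2})$, the principal part of $D_{N}(F)$ assembles, after letting the Riemann sum in $k$ become an integral, into
\[
\frac{d_{N}^{2}}{N}\int_{0}^{1}\Gamma(t)\,\psi'(t)^{2}\,dt,\qquad
\Gamma(t)=\frac{1}{2}\sum_{p=1}^{2}\frac{1}{\big(1-m_{\star}(y_{p})\big)\big(\tfrac{1-t}{r(x_{1},y_{p})}+\tfrac{t}{r(x_{2},y_{p})}\big)},
\]
and taking $\psi$ with $\psi'$ proportional to $1/\Gamma$ turns this into $\tfrac{d_{N}^{2}}{N}\big(\int_{0}^{1}dt/\Gamma(t)\big)^{-1}=\tfrac{d_{N}^{2}}{2N\mathfrak{R}}$ by \eqref{Tdef}. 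It then remains to check that the residual terms are $o(d_{N}^{2}/N)$: the cross-level remainder of the layers $\ell\ge2$ beyond their leading geometric sum; the second-order Taylor error, contributing $O(\sum_{k}d_{N}^{2}N^{-3})=O(d_{N}^{2}N^{-2})$; the $y_{1}\leftrightarrow y_{2}$ transitions and the tube complement, already bounded above; and the mild non-uniformity of the $\mu_{N}$-asymptotics near $k=O(1)$ and $N-k=O(1)$, which is harmless because $\Gamma(0^{+})$, $\Gamma(1^{-})$ are finite and the optimal $g_{p}$ degenerates there. These estimates use the $\mu_{N}$-bounds above together with the standing hypotheses on $d_{N}$ (in particular \eqref{dNcond}, which makes $(d_{N}+n)w_{N}(n)\simeq1$ uniform over $n\le N$). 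Letting $N\to\infty$ and taking the infimum over admissible $F$ yields the proposition.

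The main obstacle is the second step: identifying the correct test function. This is not a perturbation of the one used in \cite{B-D-G}; one has to understand the full landscape of transition rates to see that the transit layers $\ell\ge1$ must be used, and how — with $g_{p}$ being the local equilibrium potential — and then to carry out the somewhat delicate bookkeeping in $D_{N}(F)$ by which the geometric sum over $\ell$ generates the factor $(1-m_{\star}(y_{p}))^{-1}$ and the sum over $k$ generates the integral defining $\mathfrak{R}$, while every error term stays $o(d_{N}^{2}/N)$.
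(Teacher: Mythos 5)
Your overall route is the paper's: the Dirichlet principle with a test function supported essentially on the two tubes $\mathcal{A}_{N}^{\{x_{1},y_{p},x_{2}\}}$, a one-dimensional profile in the reaction coordinate corrected on the transit layers so that the geometric sum over $\ell$ produces the factors $(1-m_{\star}(y_{p}))^{-1}$, and the optimal profile $\psi'\propto1/\Gamma$ yielding the constant $1/(2\mathfrak{R})$; this is precisely the construction of Subsection \ref{ss-testfcndefspl} and the computation of Lemma \ref{lem1spl}, and your main-term bookkeeping and constants are consistent with \eqref{Tdef}. The genuine gap is in your treatment of the complement of the tubes. You set $F(\eta)=\varphi_{N}(\eta_{x_{2}})$ whenever $\eta_{y_{1}},\eta_{y_{2}}\ge1$ and claim this region (plus the $y_{1}\leftrightarrow y_{2}$ moves) contributes $O(d_{N}^{3}/N)$. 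That bound fails because of the inclusive part of the rates. Take $\eta$ with $\eta_{x_{1}}=0$, $\eta_{y_{1}}=\ell_{1}\ge1$, $\eta_{y_{2}}=\ell_{2}\ge1$, $\eta_{x_{2}}=N-\ell_{1}-\ell_{2}$: by \eqref{muneq1} and \eqref{munprop}, $\mu_{N}(\eta)\asymp d_{N}^{2}m_{\star}(y_{1})^{\ell_{1}}m_{\star}(y_{2})^{\ell_{2}}/(\ell_{1}\ell_{2})$, while the jump $y_{1}\to x_{2}$ has rate $\ell_{1}(d_{N}+\eta_{x_{2}})r(y_{1},x_{2})\asymp\ell_{1}N$ — it is not damped by $d_{N}$, because the target $x_{2}$ is macroscopically occupied. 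Across this edge your $F$ changes by $\simeq|\psi'(1)|/N\ne0$ (the optimal $\psi'\propto1/\Gamma$ does not vanish at $t=1$ since $\Gamma(1^{-})<\infty$), so each such edge contributes $\asymp d_{N}^{2}m_{\star}^{\ell_{1}+\ell_{2}}/(N\ell_{2})$, and summing over $\ell_{1},\ell_{2}$ gives a term of order $d_{N}^{2}/N$ with a strictly positive constant. This is the same order as your main term, so your test function only yields $\limsup\frac{N}{d_{N}^{2}}\mathrm{Cap}_{N}(\mathcal{E}_{N}^{x_{1}},\mathcal{E}_{N}^{x_{2}})\le\frac{1}{2\mathfrak{R}}+c$ with some $c>0$, not the sharp bound.

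The cure is what the paper does: near each valley the test function is made \emph{exactly} constant on everything outside the three-site tubes — constant on the two-site tubes $\mathcal{A}_{N}^{\{x_{i},y_{p}\}}$ by \eqref{testfcn2spl}, and equal to $1$ or $0$ on $\mathcal{V}_{N}$ according to a cutoff in $\eta_{x_{1}}$ by \eqref{testfcn4spl} — so that no edge of conductance of order $Nd_{N}^{2}$ (or $Nd_{N}$, as along $\mathcal{A}_{N}^{\{x_{i},y_{p}\}}$) carries a nonzero gradient; the price is then paid only across a mid-tube cutoff where $\mu_{N}$ is of order $d_{N}^{3}$ or $m_{\star}^{N/2}$, which is exactly where the technical hypothesis \eqref{dNcond3spl} enters (Lemmas \ref{lem3spl} and \ref{lem4spl}). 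Relatedly, your description of the behavior near $k=O(1)$ and $N-k=O(1)$ as a mild non-uniformity understates the point: along $\mathcal{A}_{N}^{\{x_{1},y_{p}\}}$ the conductances are of order $Nd_{N}m_{\star}^{\ell}$, so any increment of order $1/N$ in $F$ there already produces a contribution $O(d_{N}/N)\gg d_{N}^{2}/N$. Hence the degeneration of your layer correction $g_{p}$ at the two ends (effectively quadratic, as the paper's interpolation \eqref{testfcn3spl} produces automatically and as \eqref{testfcn2spl} enforces exactly) is an essential requirement that must be built into the definition and verified, not a boundary nuisance; once both repairs are made, the rest of your argument goes through along the lines of the paper.
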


\subsection{\label{ss-prenotspl}Preliminary notions}

Let $m_{\star}=\max_{p=1}^{2}m_{\star}(y_{p})<1$ and recall the notation
\eqref{tubedefeq}. For all $N$, we define the following discretized
version of the constant $\mathfrak{R}$ given in \eqref{Tdef}:

\[
\mathfrak{R}^{N}=\sum_{t=1}^{N}\frac{1}{\sum_{p=1}^{2}\frac{(1-m_{\star}(y_{p}))^{-1}}{\frac{N-t}{r(x_{1},\,y_{p})}+\frac{t-1}{r(x_{2},\,y_{p})}}}.
\]
Clearly, we have $N^{-2}\mathfrak{R}^{N}\to\mathfrak{R}$ as $N$
tends to infinity.

The constant $\mathfrak{R}^{N}$ has the shape of an inverse effective
conductance of an electrical network consisted of conductors. In this
sense, $\mathfrak{R}^{N}$ can be regarded as the inverse conductance
of $N$ serially connected conductors ($1\le t\le N$). Moreover,
each conductor can be decomposed into two parralelly connected conductors
($1\le p\le2$), and each of them corresponds to the motion of a particle
from site $x_{1}$ to site $x_{2}$, passing through $y_{p}$, for
$p=1,\,2$. In each individual conductance $(1-m_{\star}(y_{p}))^{-1}(\frac{N-t}{r(x_{1},\,y_{p})}+\frac{t-1}{r(x_{2},\,y_{p})})^{-1}$,
the former term corresponds to the sum of geometric series of ratio
$m_{\star}(y_{p})$, and the latter term corresponds to the serial
connection of particle motions $x_{1}\leftrightarrow y_{p}$ and $x_{2}\leftrightarrow y_{p}$
with conductances $\frac{r(x_{1},\,y_{p})}{N-t}$ and $\frac{r(x_{2},\,y_{p})}{t-1}$,
respectively. These heuristic explanations are rigorously formulated
in the proof of Lemma \ref{lem1spl}.

Moreover, we define 
\begin{equation}
\mathcal{U}_{N}=\bigcup_{p=1}^{2}\mathcal{A}_{N}^{x_{1}y_{p}x_{2}}\text{ and }\mathcal{V}_{N}=\mathcal{H}_{N}\setminus\mathcal{U}_{N}.\label{unvndefspl}
\end{equation}

\subsection{\label{ss-testfcndefspl}Construction of test function $f_{\textup{test}}$}

In this subsection, we define a test function $f=f_{\textup{test}}$
on $\mathcal{H}_{N}$, which approximates the equilibrium potential
$h_{\mathcal{E}_{N}^{x_{1}},\,\mathcal{E}_{N}^{x_{2}}}$. This procedure
presents the \emph{first major achievement} of this article. To this
end, $f$ is constructed in four steps. See Figure \ref{fig3} for
a graphical explanation of this process.
\begin{itemize}
\item First, we define $f$ on $\mathcal{E}_{N}(S)$:
\begin{equation}
f(\xi^{x_{1}})=1\text{ and }f(\xi^{x_{2}})=f(\xi^{y_{1}})=f(\xi^{y_{2}})=0,\label{testfcn1spl}
\end{equation}
so that condition \eqref{Diri-Thomeq} is verified.
\item Second, we define $f$ on $\widehat{\mathcal{A}}_{N}^{x_{i}y_{p}}$
for $1\le i\le2$ and $1\le p\le2$ by
\begin{equation}
f(\xi_{k}^{x_{i}y_{p}})=f(\xi^{x_{i}}),\ 1\le k\le N-1.\label{testfcn2spl}
\end{equation}
\item Next, we define $f$ on the remainder of $\mathcal{U}_{N}$, i.e.,
on $\mathcal{A}_{N}^{x_{1}y_{p}x_{2}}\setminus(\mathcal{A}_{N}^{x_{1}y_{p}}\cup\mathcal{A}_{N}^{x_{2}y_{p}})$
for $1\le p\le2$. The main contribution to the Dirichlet form occurs
in this part. For $k\in\llbracket1,\,N-2\rrbracket$ and $\ell\in\llbracket0,\,N-k-1\rrbracket$,
\begin{equation}
f(\xi_{k,\,\ell}^{x_{1}y_{p}x_{2}})=\frac{K_{p}^{k,\,\ell}}{\mathfrak{R}^{N}},\label{testfcn3spl}
\end{equation}
where for $k\ge0$ and $\ell\ge0$, 
\[
K_{p}^{k,\,\ell}=\sum_{t=1}^{k}\frac{\frac{N-t}{r(x_{1},\,y_{p})}\big/(\frac{N-t}{r(x_{1},\,y_{p})}+\frac{t-1}{r(x_{2},\,y_{p})})}{\sum_{q=1}^{2}\frac{(1-m_{\star}(y_{q}))^{-1}}{\frac{N-t}{r(x_{1},\,y_{q})}+\frac{t-1}{r(x_{2},\,y_{q})}}}+\sum_{t=1}^{k+\ell}\frac{\frac{t-1}{r(x_{2},\,y_{p})}\big/(\frac{N-t}{r(x_{1},\,y_{p})}+\frac{t-1}{r(x_{2},\,y_{p})})}{\sum_{q=1}^{2}\frac{(1-m_{\star}(y_{q}))^{-1}}{\frac{N-t}{r(x_{1},\,y_{q})}+\frac{t-1}{r(x_{2},\,y_{q})}}}.
\]
\item Finally, we define $f$ on $\mathcal{V}_{N}$. Assume $\eta\in\mathcal{V}_{N}$.
There are three types, \textbf{(V1)}, \textbf{(V2)}, and \textbf{(V3)},
denoted by $\mathcal{V}_{N}^{1}$, $\mathcal{V}_{N}^{2}$, and $\mathcal{V}_{N}^{3}$,
respectively, such that
\begin{equation}
\mathcal{V}_{N}=\mathcal{V}_{N}^{1}\cup\mathcal{V}_{N}^{2}\cup\mathcal{V}_{N}^{3}.\label{vndivspl}
\end{equation}

\begin{itemize}
\item[\textbf{(V1)}]  If $\eta_{x_{1}}=0$, then define 
\begin{equation}
f(\eta)=0.\label{testfcn4spl}
\end{equation}
\item[\textbf{(V2)}]  If $\eta_{x_{1}}\ge1$ and $\eta_{x_{2}}=0$, then define 
\begin{equation}
f(\eta)=1.\label{testfcn4-2spl}
\end{equation}
\item[\textbf{(V3)}]  If $\eta_{x_{1}}\ge1$ and $\eta_{x_{2}}\ge1$, we define 
\begin{equation}
f(\eta)=f(\xi_{\eta_{x_{1}}}^{x_{1}x_{2}}).\label{testfcn4-3spl}
\end{equation}
\end{itemize}
\item By the above construction, $0\le f(\eta)\le1$ for all $\eta\in\mathcal{H}_{N}$.
\end{itemize}
\begin{figure}
\includegraphics[width=0.84\textwidth]{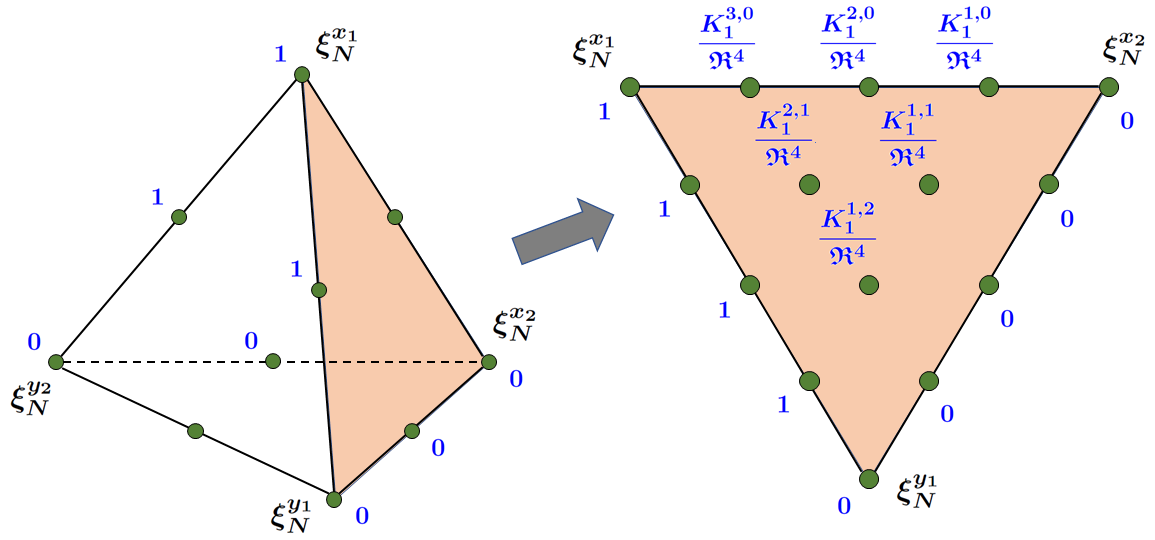}\caption{\label{fig3}(Left) distribution of the test function in a model satisfying
Condition \ref{splcond} with $N=4$. (Right) more detailed landscape
of the test function on the tube $\mathcal{A}_{N}^{x_{1}y_{1}x_{2}}$.}
\end{figure}

Here, we divide the Dirichlet form into four parts:
\[
\begin{aligned}D_{N}(f) & =\sum_{\{\eta,\,\zeta\}\subseteq\mathcal{H}_{N}}\mu_{N}(\eta)\mathbf{q}_{N}(\eta,\,\zeta)\{f(\zeta)-f(\eta)\}^{2}\\
 & =\Sigma_{1}(f)+\Sigma_{2}(f)+\Sigma_{3}(f)+\Sigma_{4}(f).
\end{aligned}
\]
The four summations are defined as follows, according to where the
movement $\eta\leftrightarrow\zeta$ occurs. 
\begin{itemize}
\item The first part $\Sigma_{1}(f)$ consists of movements inside $\mathcal{A}_{N}^{x_{1}y_{p}x_{2}}$
for $1\le p\le2$.
\item The second part $\Sigma_{2}(f)$ consists of movements between the
set differences of $\mathcal{A}_{N}^{x_{1}y_{1}x_{2}}$ and $\mathcal{A}_{N}^{x_{1}y_{2}x_{2}}$.
\item The third part $\Sigma_{3}(f)$ consists of movements between $\mathcal{U}_{N}$
and $\mathcal{V}_{N}$.
\item The last part $\Sigma_{4}(f)$ consists of movements inside $\mathcal{V}_{N}$.
\end{itemize}
From \eqref{unvndefspl}, the above four members are disjoint, and
they characterize $D_{N}(f)$ completely. As shown below, $\Sigma_{1}(f)$
is the main contribution to $D_{N}(f)$, whereas the other summations
vanish (compared to $\Sigma_{1}(f)$) as $N$ tends to infinity.

\subsection{Main contribution of Dirichlet form}

In this subsection, we calculate the main contribution to the Dirichlet
form, which is provided by $\Sigma_{1}(f)$. This is executed in Lemma
\ref{lem1spl}. 
\begin{lem}
\label{lem1spl}Under the conditions of Theorem \ref{mainspl}, it
holds that 
\[
\Sigma_{1}(f)\le\frac{d_{N}^{2}}{2N}\Big[\frac{1}{\mathfrak{R}}+O\Big(\frac{1}{N}\Big)\Big].
\]
\end{lem}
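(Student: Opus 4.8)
The plan is to compute $\Sigma_1(f)$ directly from the definition \eqref{Diridef}, restricted to nearest-neighbor moves inside the tubes $\mathcal{A}_N^{\{x_1,y_p,x_2\}}$, and to recognize the resulting sum as the Dirichlet form of a one-dimensional electrical network whose inverse effective conductance is $\mathfrak{R}^N$. Concretely, $\Sigma_1(f)$ splits as $\sum_{p=1}^2 \Sigma_1^{(p)}(f)$, where $\Sigma_1^{(p)}(f)$ collects the moves $x_1\leftrightarrow y_p$ and $x_2\leftrightarrow y_p$ inside $\mathcal{A}_N^{\{x_1,y_p,x_2\}}$. For a configuration $\eta$ in this tube with $\eta_{x_1}=k$, $\eta_{y_p}=\ell$, $\eta_{x_2}=N-k-\ell$, the rate of the move sending a particle $x_1\to y_p$ is $\mathbf{q}_N(\eta,\sigma^{x_1,y_p}\eta)=k(d_N+\ell)r(x_1,y_p)$, and the corresponding increment of $f$ is, by \eqref{testfcn3spl}, $f(\sigma^{x_1,y_p}\eta)-f(\eta)=\tfrac{1}{\mathfrak{R}^N}(K_p^{k-1,\ell+1}-K_p^{k,\ell})$; by the telescoping structure of the definition of $K_p^{k,\ell}$, this difference is $-\tfrac{1}{\mathfrak{R}^N}$ times the $t=k$ summand of the first sum (the $x_1\leftrightarrow y_p$ branch). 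An analogous identity holds for $x_2\to y_p$ moves with the second sum. This is where the invariant measure enters: by \eqref{muneq1} and the asymptotics \eqref{munprop}, for $\eta\in\mathcal{A}_N^{\{x_1,y_p,x_2\}}$ one has $\mu_N(\eta)\simeq \tfrac{d_N}{N|S_\star|}\, m_\star(y_p)^{\ell}$ (uniformly, using \eqref{dNcond} to absorb the $w_N$ factors and $m_\star(x_1)=m_\star(x_2)=1$), and the factor $(d_N+\ell)$ in the rate pairs with the product $\mu_N(\eta)\,k\, r(x_1,y_p)$ so that after summing over $\ell\ge0$ one obtains the geometric series $\sum_{\ell\ge0}m_\star(y_p)^\ell=(1-m_\star(y_p))^{-1}$, reproducing exactly the $(1-m_\star(y_p))^{-1}$ prefactor appearing in $\mathfrak{R}^N$.

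Carrying this out step by step: (i) write $\Sigma_1(f)=\tfrac12\sum_{p}\sum_{\eta}\mu_N(\eta)\big[k(d_N+\ell)r(x_1,y_p)(\Delta^{x_1}_p f)^2+(N-k-\ell)(d_N+\ell)r(x_2,y_p)(\Delta^{x_2}_p f)^2\big]$ where $\Delta$ denotes the relevant increment; (ii) substitute the telescoping expressions for $\Delta^{x_i}_p f$, which produces, for each fixed $t=k$ (resp.\ $t=k+\ell$), a factor $\tfrac{1}{(\mathfrak{R}^N)^2}$ times a square of the $t$-th summand of $K_p$; the $k$ in the numerator of the rate cancels against a $\tfrac1k$ type factor produced by the square, and the algebra collapses the parallel/series structure so that the $\eta$-sum over the slice $\{k=t\}$ (resp.\ $\{k+\ell=t\}$), weighted by $\mu_N(\eta)\simeq \tfrac{d_N}{N|S_\star|}m_\star(y_p)^\ell$ and summed over $\ell$, yields precisely $\tfrac{d_N}{N|S_\star|}\cdot\tfrac{1}{(\mathfrak{R}^N)^2}\cdot(\text{$t$-th conductance})$; (iii) sum over $t$ from $1$ to $N$ and over $p$, using the identity $\sum_{t}(\text{$t$-th conductance}) = \mathfrak{R}^N$ (this is the "effective conductance of $N$ serial conductors" statement sketched heuristically in Subsection \ref{ss-prenotspl}), to get $\Sigma_1(f)\lesssim \tfrac{d_N}{2N|S_\star|}\cdot\tfrac{1}{\mathfrak{R}^N}$. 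Finally, recalling $|S_\star|=2$ in the simple case (or rather that the normalizing constant in \eqref{munprop} gives the right factor $1/2$ after the $\tfrac12$ in front of $D_N$), and using $N^{-2}\mathfrak{R}^N\to\mathfrak{R}$ together with a quantitative version $\mathfrak{R}^N = N^2(\mathfrak{R}+O(1/N))$, one obtains $\Sigma_1(f)\le \tfrac{d_N^2}{2N}\big[\tfrac{1}{\mathfrak{R}}+O(1/N)\big]$, after checking that replacing $d_N$-approximations ($\simeq$) by exact bounds costs only a multiplicative $1+O(d_N\log N)=1+o(1)$, which is swallowed by the stated error.

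The main obstacle will be the bookkeeping in step (ii): verifying that the telescoping increments of $K_p^{k,\ell}$ interact correctly with the transition rates so that the $k$-dependence (resp.\ $(N-k-\ell)$-dependence) in the rate exactly cancels the denominators appearing in the squared increment, leaving a clean $t$-indexed sum with no residual $k,\ell$ cross terms. A secondary subtlety is the uniform control of $\mu_N(\eta)$ on the tubes — one must be careful that $m_\star(x_1)=m_\star(x_2)=1$ makes the weight along the tube depend only on $\ell=\eta_{y_p}$ up to the uniform factor $(d_N+j)w_N(j)\simeq 1$, and that the partition function asymptotics $NZ_N/d_N\to|S_\star|$ from \eqref{munprop} supplies the correct constant. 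The boundary terms at the ends of the tube (where $\eta_{x_1}=0$ or $\eta_{x_2}=0$, i.e.\ moves leaving the slice structure) and the well-definedness of $f$ on $\mathcal{A}_N^{\{x_1,x_2\}}$ claimed in Subsection \ref{ss-testfcndefspl} must also be invoked, but these are routine once the core identity is in place. The quantitative error $O(1/N)$ is then a matter of Euler–Maclaurin comparison between $N^{-2}\mathfrak{R}^N$ and the integral $\mathfrak{R}$, since the integrand defining $\mathfrak{R}$ is smooth on $[0,1]$.
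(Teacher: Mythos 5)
Your overall strategy is the same as the paper's (telescoping increments of $K_p^{k,\ell}$, the series/parallel network interpretation, a geometric series in $\ell=\eta_{y_p}$, and a Riemann-sum comparison of $N^{-2}\mathfrak{R}^N$ with $\mathfrak{R}$), but the quantitative core of your step (ii) is wrong, and the error propagates into a conclusion that does not follow from your own intermediate bound. On the tube, the invariant measure is \emph{not} $\mu_N(\eta)\simeq\frac{d_N}{N|S_\star|}m_\star(y_p)^{\ell}$: by \eqref{muneq1} one has $\mu_N(\eta)=\frac{1}{Z_N}w_N(k)w_N(\ell)w_N(N-k-\ell)\,m_\star(y_p)^{\ell}$ with $Z_N\simeq d_N|S_\star|/N$, and the factors $w_N(j)$ decay like $d_N/j$ in the occupation numbers, so for $k,\ell,N-k-\ell\ge1$ one gets $\mu_N(\eta)\simeq\frac{Nd_N^2}{|S_\star|}\,\frac{m_\star(y_p)^{\ell}}{k\,\ell\,(N-k-\ell)}$. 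These $w_N$ factors cannot be ``absorbed using \eqref{dNcond}'': they supply the powers of $d_N$ and, crucially, the weight $1/(N-k-\ell)$ that turns the $k$-sum over a slice into a quantity of order $1/N$ rather than order one. With the correct measure, the slice-$t$ contribution is of order $\frac{Nd_N^2}{|S_\star|}\cdot\frac{1}{(\mathfrak{R}^N)^2}\cdot\frac{1}{c_t}$, where $c_t$ is the slice conductance, and summing over $t$ uses $\sum_{t}c_t^{-1}=\mathfrak{R}^N$ (the sum of slice \emph{resistances}; your identity ``$\sum_t(\text{$t$-th conductance})=\mathfrak{R}^N$'' has the roles reversed), giving $\Sigma_1(f)\lesssim\frac{Nd_N^2}{2\mathfrak{R}^N}\simeq\frac{d_N^2}{2N\mathfrak{R}}$. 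Your intermediate bound $\frac{d_N}{2N|S_\star|}\cdot\frac{1}{\mathfrak{R}^N}\simeq\frac{d_N}{4N^3\mathfrak{R}}$ differs from your stated conclusion $\frac{d_N^2}{2N}\bigl[\frac{1}{\mathfrak{R}}+O(\frac1N)\bigr]$ by a factor of order $d_NN^2$, which under \eqref{dNcond3spl} tends to $0$; so the last step of your argument is not a matter of replacing ``$\simeq$'' by exact bounds, it simply does not follow.

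A second gap is that the ``clean cancellation'' you assert in step (ii) is not exact: the factor $1/(N-k-\ell)$ couples $k$ and $\ell$, so the geometric series in $\ell$ does not factor out of the $t$-indexed sum. This is where the paper does the real work: it restricts to $\ell\le\lfloor N/2\rfloor$ (the complementary range being $O(m_\star^{N/2})$, negligible since $d_N$ decays subexponentially), splits $\frac{1}{N-\ell-k-1}=\frac{1}{N-k-1}+\frac{\ell}{(N-\ell-k-1)(N-k-1)}$ and bounds the cross terms, and treats the boundary layers $\eta_{x_1}=1$ and $\eta_{x_2}=1$ separately using \eqref{testfcn2spl} and \eqref{testfcn3spl} (these contribute $O(d_N/N^3)$ because $K_p^{1,\ell}$ and $\mathfrak{R}^N-K_p^{N-\ell-1,\ell}$ are of size $(\ell+1)\cdot O(N)$ against $\mathfrak{R}^N\simeq N^2\mathfrak{R}$). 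These estimates, which your sketch defers as routine bookkeeping, are precisely what produce the stated $O(1/N)$ error term. Two minor points: with only the moves out of $x_1$ and $x_2$ listed, the prefactor $\frac12$ in your step (i) discounts each edge twice (each unordered edge should be counted once, or both directions with the $\frac12$); and the factor $\frac12$ in the final bound comes from $|S_\star|=2$ through $Z_N$, not from the $\frac12$ in front of $D_N$.
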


\begin{proof}
To calculate $\Sigma_{1}(f)$, we write down all movements inside
$\mathcal{A}_{N}^{x_{1}y_{p}x_{2}}$ and sum it up for $1\le p\le2$.
More precisely, 
\[
\Sigma_{1}(f)=\sum_{p=1}^{2}\sum_{\eta\in\mathcal{A}_{N}^{x_{1}y_{p}x_{2}}}\sum_{i=1}^{2}\mu_{N}(\eta)\mathbf{q}_{N}(\eta,\,\sigma^{x_{i},\,y_{p}}\eta)\{f(\sigma^{x_{i},\,y_{p}}\eta)-f(\eta)\}^{2}.
\]
There are no overlaps because $r(x_{1},\,x_{2})=r(x_{2},\,x_{1})=0$.
By \eqref{muneq1} and \eqref{munprop}, the right-hand side is asymptotically
equal to
\begin{equation}
\begin{aligned}\frac{d_{N}N}{2}\sum_{p=1}^{2}\sum_{\ell=0}^{N-1}m_{\star}(y_{p})^{\ell}\Big[ & \sum_{k=1}^{N-\ell}w_{N}(N-\ell-k)r(x_{1},\,y_{p})\{f(\xi_{k-1,\,\ell+1}^{x_{1}y_{p}x_{2}})-f(\xi_{k,\,\ell}^{x_{1}y_{p}x_{2}})\}^{2}\\
 & +\sum_{k=0}^{N-\ell-1}w_{N}(k)r(x_{2},\,y_{p})\{f(\xi_{k,\,\ell+1}^{x_{1}y_{p}x_{2}})-f(\xi_{k,\,\ell}^{x_{1}y_{p}x_{2}})\}^{2}\Big].
\end{aligned}
\label{lem1eq1spl}
\end{equation}
Here, we fix $1\le p\le2$ and divide the range $\{0\le\ell\le N-1\}$
into $\{\ell>\lfloor\frac{N}{2}\rfloor\}$ and $\{\ell\le\lfloor\frac{N}{2}\rfloor\}$.
First, using \eqref{munprop} and summing up the geometric series
with respect to $m_{\star}(y_{p})$, summation in the first range
$\{\ell>\lfloor\frac{N}{2}\rfloor\}$ is easily bounded from above
by
\begin{equation}
Cd_{N}N\sum_{\ell>\lfloor N/2\rfloor}m_{\star}(y_{p})^{\ell}=o\Big(\frac{d_{N}^{2}}{N}\Big).\label{lem1eq2spl}
\end{equation}
Second, we calculate summation in the range $\{\ell\le\lfloor\frac{N}{2}\rfloor\}$.
By \eqref{testfcn2spl}, we discard the movements inside $\mathcal{A}_{N}^{x_{1}y_{p}}$
and $\mathcal{A}_{N}^{x_{2}y_{p}}$. Hence, we rewrite this summation
as
\begin{equation}
\begin{aligned}\sum_{\ell=0}^{\lfloor N/2\rfloor}m_{\star}(y_{p})^{\ell}\Big[ & \sum_{k=2}^{N-\ell-1}w_{N}(N-\ell-k)r(x_{1},\,y_{p})\{f(\xi_{k-1,\,\ell+1}^{x_{1}y_{p}x_{2}})-f(\xi_{k,\,\ell}^{x_{1}y_{p}x_{2}})\}^{2}\\
 & +\sum_{k=1}^{N-\ell-2}w_{N}(k)r(x_{2},\,y_{p})\{f(\xi_{k,\,\ell+1}^{x_{1}y_{p}x_{2}})-f(\xi_{k,\,\ell}^{x_{1}y_{p}x_{2}})\}^{2}\Big].\\
 & +w_{N}(N-\ell-1)r(x_{1},\,y_{p})\{f(\xi_{0,\,\ell+1}^{x_{1}y_{p}x_{2}})-f(\xi_{1,\,\ell}^{x_{1}y_{p}x_{2}})\}^{2}\\
 & +w_{N}(N-\ell-1)r(x_{2},\,y_{p})\{f(\xi_{N-\ell-1,\,\ell+1}^{x_{1}y_{p}x_{2}})-f(\xi_{N-\ell-1,\,\ell}^{x_{1}y_{p}x_{2}})\}^{2}\Big].
\end{aligned}
\label{lem1eq3spl}
\end{equation}
By \eqref{munprop} and \eqref{testfcn3spl}, the first line of \eqref{lem1eq3spl}
is asymptotically equivalent to
\[
\frac{d_{N}}{(\mathfrak{R}^{N})^{2}}\sum_{\ell=0}^{\lfloor N/2\rfloor}m_{\star}(y_{p})^{\ell}\sum_{k=2}^{N-\ell-1}\frac{r(x_{1},\,y_{p})}{N-\ell-k}\Big\{\frac{\frac{N-k}{r(x_{1},\,y_{p})}\big/(\frac{N-k}{r(x_{1},\,y_{p})}+\frac{k-1}{r(x_{2},\,y_{p})})}{\sum_{q=1}^{2}\frac{(1-m_{\star}(y_{q}))^{-1}}{\frac{N-k}{r(x_{1},\,y_{q})}+\frac{k-1}{r(x_{2},\,y_{q})}}}\Big\}^{2}.
\]
Dividing $\frac{1}{N-\ell-k}=\frac{1}{N-k}+\frac{\ell}{(N-\ell-k)(N-k)}$
and using $\frac{N-k}{N-\ell-k}\le\ell+1$, the last line is bounded
by
\[
\frac{d_{N}}{(\mathfrak{R}^{N})^{2}}\sum_{\ell=0}^{\lfloor N/2\rfloor}m_{\star}(y_{p})^{\ell}\sum_{k=2}^{N-\ell-1}\Big[\frac{\frac{N-k}{r(x_{1},\,y_{p})}\big/(\frac{N-k}{r(x_{1},\,y_{p})}+\frac{k-1}{r(x_{2},\,y_{p})})^{2}}{\{\sum_{q=1}^{2}\frac{(1-m_{\star}(y_{q}))^{-1}}{\frac{N-k}{r(x_{1},\,y_{q})}+\frac{k-1}{r(x_{2},\,y_{q})}}\}^{2}}+C\ell(\ell+1)\Big].
\]
By the theory of Riemann integration, this is further bounded by
\[
\frac{d_{N}}{(\mathfrak{R}^{N})^{2}}\sum_{\ell=0}^{\lfloor N/2\rfloor}m_{\star}(y_{p})^{\ell}\Big[N^{2}\int_{0}^{1}\frac{\frac{1-t}{r(x_{1},\,y_{p})}\big/(\frac{1-t}{r(x_{1},\,y_{p})}+\frac{t}{r(x_{2},\,y_{p})})^{2}}{\{\sum_{q=1}^{2}\frac{(1-m_{\star}(y_{q}))^{-1}}{\frac{1-t}{r(x_{1},\,y_{q})}+\frac{t}{r(x_{2},\,y_{q})}}\}^{2}}dt+O(N)+CN\ell(\ell+1)\Big].
\]
Calculating the geometric series in $0\le\ell\le\lfloor\frac{N}{2}\rfloor$,
this asymptotically equals
\begin{equation}
\frac{d_{N}}{\mathfrak{R}^{2}N^{2}}\frac{1}{1-m_{\star}(y_{p})}\Big[\int_{0}^{1}\frac{\frac{1-t}{r(x_{1},\,y_{p})}\big/(\frac{1-t}{r(x_{1},\,y_{p})}+\frac{t}{r(x_{2},\,y_{p})})^{2}}{\{\sum_{q=1}^{2}\frac{(1-m_{\star}(y_{q}))^{-1}}{\frac{1-t}{r(x_{1},\,y_{q})}+\frac{t}{r(x_{2},\,y_{q})}}\}^{2}}dt+O\Big(\frac{1}{N}\Big)\Big].\label{lem1eq4spl}
\end{equation}
Similarly, the second line of \eqref{lem1eq3spl} is asymptotically
bounded from above by 
\begin{equation}
\frac{d_{N}}{\mathfrak{R}^{2}N^{2}}\frac{1}{1-m_{\star}(y_{p})}\Big[\int_{0}^{1}\frac{\frac{t}{r(x_{2},\,y_{p})}\big/(\frac{1-t}{r(x_{1},\,y_{p})}+\frac{t}{r(x_{2},\,y_{p})})^{2}}{\{\sum_{q=1}^{2}\frac{(1-m_{\star}(y_{q}))^{-1}}{\frac{1-t}{r(x_{1},\,y_{q})}+\frac{t}{r(x_{2},\,y_{q})}}\}^{2}}dt+O\Big(\frac{1}{N}\Big)\Big].\label{lem1eq5spl}
\end{equation}
The remaining parts of \eqref{lem1eq3spl} are asymptotically equal
to
\begin{equation}
\begin{aligned}d_{N}\sum_{\ell=0}^{\lfloor N/2\rfloor}m_{\star}(y_{p})^{\ell}\Big[ & \frac{r(x_{1},\,y_{p})}{N-\ell-1}\{f(\xi_{0,\,\ell+1}^{x_{1}y_{p}x_{2}})-f(\xi_{1,\,\ell}^{x_{1}y_{p}x_{2}})\}^{2}\\
 & +\frac{r(x_{2},\,y_{p})}{N-\ell-1}\{f(\xi_{N-\ell-1,\,\ell+1}^{x_{1}y_{p}x_{2}})-f(\xi_{N-\ell-1,\,\ell}^{x_{1}y_{p}x_{2}})\}^{2}\Big].
\end{aligned}
\label{lem1eq6spl}
\end{equation}
By \eqref{testfcn2spl} and \eqref{testfcn3spl},
\[
\big|f(\xi_{0,\,\ell+1}^{x_{1}y_{p}x_{2}})-f(\xi_{1,\,\ell}^{x_{1}y_{p}x_{2}})\big|=\frac{K_{p}^{1,\,\ell}}{\mathfrak{R}^{N}}\le\frac{1}{\mathfrak{R}^{N}}\sum_{t=1}^{\ell+1}\frac{1}{\sum_{q=1}^{2}\frac{(1-m_{\star}(y_{q}))^{-1}}{\frac{N-t}{r(x_{1},\,y_{q})}+\frac{t-1}{r(x_{2},\,y_{q})}}},
\]
which is of order $(\ell+1)\times O(1/N)$, and
\[
\big|f(\xi_{N-\ell-1,\,\ell+1}^{x_{1}y_{p}x_{2}})-f(\xi_{N-\ell-1,\,\ell}^{x_{1}y_{p}x_{2}})\big|=\frac{\mathfrak{R}^{N}-K_{p}^{N-\ell-1,\,\ell}}{\mathfrak{R}^{N}}\le\frac{1}{\mathfrak{R}^{N}}\sum_{t=N-\ell}^{N}\frac{1}{\sum_{q=1}^{2}\frac{(1-m_{\star}(y_{q}))^{-1}}{\frac{N-t}{r(x_{1},\,y_{q})}+\frac{t-1}{r(x_{2},\,y_{q})}}},
\]
which is again of order $(\ell+1)\times O(1/N)$. Hence, \eqref{lem1eq6spl}
is bounded from above by 
\begin{equation}
\frac{Cd_{N}}{N}\sum_{\ell=0}^{\lfloor N/2\rfloor}m_{\star}^{\ell}\frac{(\ell+1)^{2}}{N^{2}}=O\Big(\frac{d_{N}}{N^{3}}\Big).\label{lem1eq7spl}
\end{equation}
Therefore, by \eqref{lem1eq4spl}, \eqref{lem1eq5spl}, and \eqref{lem1eq7spl},
we have the following asymptotic upper bound for \eqref{lem1eq3spl}:
\begin{equation}
\frac{d_{N}}{\mathfrak{R}^{2}N^{2}}\Big[\int_{0}^{1}\frac{(1-m_{\star}(y_{p}))^{-1}(\frac{1-t}{r(x_{1},\,y_{p})}+\frac{t}{r(x_{2},\,y_{p})})^{-1}}{\{\sum_{q=1}^{2}\frac{(1-m_{\star}(y_{q}))^{-1}}{\frac{1-t}{r(x_{1},\,y_{q})}+\frac{t}{r(x_{2},\,y_{q})}}\}^{2}}dt+O\Big(\frac{1}{N}\Big)\Big].\label{lem1eq8spl}
\end{equation}
Collecting \eqref{lem1eq1spl}, \eqref{lem1eq2spl}, and \eqref{lem1eq8spl},
and the fact that $d_{N}$ decays subexponentially, $\Sigma_{1}(f)$
has the following asymptotic upper bound:
\[
\begin{aligned} & \frac{d_{N}^{2}}{2\mathfrak{R}^{2}N}\Big[\int_{0}^{1}\frac{\sum_{p=1}^{2}(1-m_{\star}(y_{p}))^{-1}(\frac{1-t}{r(x_{1},\,y_{p})}+\frac{t}{r(x_{2},\,y_{p})})^{-1}}{\{\sum_{q=1}^{2}\frac{(1-m_{\star}(y_{q}))^{-1}}{\frac{1-t}{r(x_{1},\,y_{q})}+\frac{t}{r(x_{2},\,y_{q})}}\}^{2}}dt+O\Big(\frac{1}{N}\Big)\Big]\\
 & =\frac{d_{N}^{2}}{2\mathfrak{R}^{2}N}\Big[\int_{0}^{1}\frac{1}{\sum_{q=1}^{2}\frac{(1-m_{\star}(y_{q}))^{-1}}{\frac{1-t}{r(x_{1},\,y_{q})}+\frac{t}{r(x_{2},\,y_{q})}}}dt+O\Big(\frac{1}{N}\Big)\Big].
\end{aligned}
\]
The integral in the last line is exactly $\mathfrak{R}$. Hence, we
have 
\[
\Sigma_{1}(f)\le\frac{d_{N}^{2}}{2N}\Big[\frac{1}{\mathfrak{R}}+O\Big(\frac{1}{N}\Big)\Big].
\]
The last formula yields our exact expectations.
\end{proof}

\subsection{\label{ss-remDirispl}Remainder of Dirichlet form}

Next, we deal with the remaining terms in the Dirichlet form, $\Sigma_{2}(f)$,
$\Sigma_{3}(f)$, and $\Sigma_{4}(f)$. Lemma \ref{lem2spl} deals
with $\Sigma_{2}(f)$. 
\begin{lem}
\label{lem2spl}Under the conditions of Theorem \ref{mainspl}, it
holds that 
\[
\Sigma_{2}(f)=O\Big(\frac{d_{N}^{3}\log N}{N^{2}}\Big)=o\Big(\frac{d_{N}^{2}}{N}\Big).
\]
\end{lem}

\begin{proof}
Recall that $\Sigma_{2}(f)$ consists of dynamics between the set
differences of $\mathcal{A}_{N}^{x_{1}y_{1}x_{2}}$ and $\mathcal{A}_{N}^{x_{1}y_{2}x_{2}}$.
This happens when a sole particle moves between $y_{1}$ and $y_{2}$.
Precisely,
\begin{equation}
\Sigma_{2}(f)=\sum_{k=0}^{N-1}\mu_{N}(\xi_{k,\,1}^{x_{1}y_{1}x_{2}})d_{N}r(y_{1},\,y_{2})\{f(\xi_{k,\,1}^{x_{1}y_{2}x_{2}})-f(\xi_{k,\,1}^{x_{1}y_{1}x_{2}})\}^{2}.\label{lem2eq1spl}
\end{equation}
If $k=0$ or $N-1$, then $f(\xi_{k,\,1}^{x_{1}y_{1}x_{2}})=f(\xi_{k,\,1}^{x_{1}y_{2}x_{2}})$
by \eqref{testfcn2spl}. If $1\le k\le N-2$, then by \eqref{testfcn3spl},
\[
f(\xi_{k,\,1}^{x_{1}y_{2}x_{2}})-f(\xi_{k,\,1}^{x_{1}y_{1}x_{2}})=\frac{\frac{k}{r(x_{2},\,y_{2})}\big/(\frac{N-k-1}{r(x_{1},\,y_{2})}+\frac{k}{r(x_{2},\,y_{2})})-\frac{k}{r(x_{2},\,y_{1})}\big/(\frac{N-k-1}{r(x_{1},\,y_{1})}+\frac{k}{r(x_{2},\,y_{1})})}{\mathfrak{R}^{N}\sum_{q=1}^{2}\frac{(1-m_{\star}(y_{q}))^{-1}}{\frac{N-k-1}{r(x_{1},\,y_{q})}+\frac{k}{r(x_{2},\,y_{q})}}},
\]
which is $O(1/N)$. Thus, \eqref{lem2eq1spl} is bounded by 
\[
C\sum_{k=1}^{N-2}\frac{Nd_{N}^{3}}{k(N-k-1)}\times\frac{1}{N^{2}}=O\Big(\frac{d_{N}^{3}\log N}{N^{2}}\Big).
\]
This concludes the proof. 
\end{proof}
Next, we consider $\Sigma_{3}(f)$. 
\begin{lem}
\label{lem3spl}Under the conditions of Theorem \ref{mainspl}, it
holds that 
\[
\Sigma_{3}(f)=O\Big(d_{N}^{2}m_{\star}^{N}+\frac{d_{N}^{3}}{N}\log N\Big)=o\Big(\frac{d_{N}^{2}}{N}\Big).
\]
\end{lem}

\begin{proof}
We can formulate
\[
\Sigma_{3}(f)=\sum_{\eta\in\mathcal{U}_{N}}\sum_{\zeta\in\mathcal{V}_{N}}\mu_{N}(\eta)\mathbf{q}_{N}(\eta,\,\zeta)\{f(\zeta)-f(\eta)\}^{2}.
\]
We divide the summation into three cases, depending on which subset
$\eta$ belongs to.\medskip{}

\noindent \textbf{(Case 1)} $\eta\in\mathcal{A}_{N}^{x_{1}x_{2}}$:
In this case, there are no particle movements with $\zeta\in\mathcal{V}_{N}$.\medskip{}

\noindent \textbf{(Case 2)} $\eta\in\mathcal{A}_{N}^{x_{i}y_{p}}\setminus\mathcal{E}_{N}^{x_{i}}$
for some $1\le i\le2$ and $1\le p\le2$: We divide again according
to types of the particle movement.\medskip{}
\begin{itemize}
\item \textbf{(Case 2-1)} $\zeta=\sigma^{y_{p},\,y_{q}}\eta$, where $q\in\{1,\,2\}\setminus\{p\}$:
The corresponding summation becomes
\[
\sum_{i=1}^{2}\sum_{p=1}^{2}\sum_{\ell=1}^{N}\frac{w_{N}(N-\ell)w_{N}(\ell)}{Z_{N}}m_{\star}(y_{p})^{\ell}\ell d_{N}r(y_{p},\,y_{q})\{f(\xi_{N-\ell,\,\ell-1}^{x_{i}y_{p}y_{q}})-f(\xi_{N-\ell}^{x_{i}y_{p}})\}^{2}.
\]
By \eqref{testfcn1spl}, \eqref{testfcn2spl}, \eqref{testfcn4spl}
and \eqref{testfcn4-2spl}, $f(\xi_{N-\ell,\,\ell-1}^{x_{i}y_{p}y_{q}})=f(\xi_{N-\ell}^{x_{i}y_{p}})$
for all $i$, $p$, and $\ell$. Therefore, the summation is $0$
in this case.\medskip{}
\item \textbf{(Case 2-2)} $\zeta=\sigma^{x_{i},\,y_{q}}\eta$, where $q\in\{1,\,2\}\setminus\{p\}$:
The corresponding summation becomes
\[
\sum_{i=1}^{2}\sum_{p=1}^{2}\sum_{\ell=1}^{N-1}\frac{w_{N}(N-\ell)w_{N}(\ell)}{Z_{N}}m_{\star}(y_{p})^{\ell}(N-\ell)d_{N}r(x_{i},\,y_{q})\{f(\xi_{N-\ell-1,\,\ell}^{x_{i}y_{p}y_{q}})-f(\xi_{N-\ell}^{x_{i}y_{p}})\}^{2}.
\]
This vanishes unless $\ell=N-1$, in which case it becomes $O(d_{N}^{2}m_{\star}^{N})$.
Concluding, \textbf{(Case 2)} yields a contribution $O(d_{N}^{2}m_{\star}^{N})$.\medskip{}
\end{itemize}
\textbf{(Case 3)} $\eta\in\widehat{\mathcal{A}}_{N}^{x_{1}y_{p}x_{2}}$
for some $1\le p\le2$:

In this case, we can write the summation as
\begin{equation}
\begin{aligned} & \sum_{p=1}^{2}\sum_{\ell=2}^{N-2}\sum_{k=1}^{N-\ell-1}\mu_{N}(\xi_{k,\,\ell}^{x_{1}y_{p}x_{2}})\cdot\ell d_{N}r(y_{p},\,y_{q})\{f(\xi_{k,\,\ell-1,\,1}^{x_{1}y_{p}y_{q}x_{2}})-f(\xi_{k,\,\ell}^{x_{1}y_{p}x_{2}})\}^{2}\\
 & +\sum_{p=1}^{2}\sum_{\ell=1}^{N-2}\sum_{k=1}^{N-\ell-1}\mu_{N}(\xi_{k,\,\ell}^{x_{1}y_{p}x_{2}})\cdot kd_{N}r(x_{1},\,y_{q})\{f(\xi_{k-1,\,\ell,\,1}^{x_{1}y_{p}y_{q}x_{2}})-f(\xi_{k,\,\ell}^{x_{1}y_{p}x_{2}})\}^{2}\\
 & +\sum_{p=1}^{2}\sum_{\ell=1}^{N-2}\sum_{k=1}^{N-\ell-1}\mu_{N}(\xi_{k,\,\ell}^{x_{1}y_{p}x_{2}})\cdot(N-\ell-k)d_{N}r(x_{2},\,y_{q})\{f(\xi_{k,\,\ell,\,1}^{x_{1}y_{p}y_{q}x_{2}})-f(\xi_{k,\,\ell}^{x_{1}y_{p}x_{2}})\}^{2},
\end{aligned}
\label{lem3eq1spl}
\end{equation}
where in the summation, $q\in\{1,\,2\}\setminus\{p\}$. For the first
line in \eqref{lem3eq1spl}, it is bounded by
\[
C\sum_{p=1}^{2}\sum_{\ell=2}^{N-2}\sum_{k=1}^{N-\ell-1}\frac{Nd_{N}^{3}m_{\star}^{\ell}}{k(N-\ell-k)}\cdot\{f(\xi_{k,\,\ell-1,\,1}^{x_{1}y_{p}y_{q}x_{2}})-f(\xi_{k,\,\ell}^{x_{1}y_{p}x_{2}})\}^{2}.
\]
By \eqref{testfcn4-3spl}, $f(\xi_{k,\,\ell-1,\,1}^{x_{1}y_{p}y_{q}x_{2}})=f(\xi_{k}^{x_{1}x_{2}})$,
and thus by \eqref{testfcn3spl}, this is bounded by
\[
C\sum_{\ell=2}^{N-2}\sum_{k=1}^{N-\ell-1}\frac{Nd_{N}^{3}m_{\star}^{\ell}}{k(N-\ell-k)}\cdot\frac{\ell^{2}}{N^{2}}=O\Big(\frac{d_{N}^{3}}{N^{2}}\log N\Big).
\]
For the second line in \eqref{lem3eq1spl}, it is bounded by
\[
C\sum_{p=1}^{2}\sum_{\ell=1}^{N-2}\sum_{k=1}^{N-\ell-1}\frac{Nd_{N}^{3}m_{\star}^{\ell}}{\ell(N-\ell-k)}\cdot\{f(\xi_{k-1,\,\ell,\,1}^{x_{1}y_{p}y_{q}x_{2}})-f(\xi_{k,\,\ell}^{x_{1}y_{p}x_{2}})\}^{2}.
\]
Similarly, this is bounded by
\[
C\sum_{p=1}^{2}\sum_{\ell=2}^{N-2}\sum_{k=1}^{N-\ell-1}\frac{Nd_{N}^{3}m_{\star}^{\ell}}{\ell(N-\ell-n)}\cdot\frac{(\ell+1)^{2}}{N^{2}}=O\Big(\frac{d_{N}^{3}}{N}\log N\Big).
\]
Similarly, the third line in \eqref{lem3eq1spl} is also bounded by
$O(\frac{d_{N}^{3}}{N}\log N)$. Collecting all cases, we conclude
that
\[
\Sigma_{3}(f)=O\Big(d_{N}^{2}m_{\star}^{N}+\frac{d_{N}^{3}}{N}\log N\Big).
\]
\end{proof}
Finally, we deal with $\Sigma_{4}(f)$.
\begin{lem}
\label{lem4spl}Under the conditions of Theorem \ref{mainspl}, it
holds that 
\[
\Sigma_{4}(f)=O(d_{N}^{2}m_{\star}^{N})+O\Big(\frac{d_{N}^{3}}{N}\log N\Big)=o\Big(\frac{d_{N}^{2}}{N}\Big).
\]
\end{lem}

\begin{proof}
By definition, we have
\[
\Sigma_{4}(f)=\frac{1}{2}\sum_{\eta,\,\zeta\in\mathcal{V}_{N}}\mu_{N}(\eta)\mathbf{q}_{N}(\eta,\,\zeta)\{f(\zeta)-f(\eta)\}^{2}.
\]
By \eqref{vndivspl}, we divide the summation in $\eta,\,\zeta\in\mathcal{V}_{N}$
by where $\eta\leftrightarrow\zeta$ happens.\medskip{}

\noindent \textbf{(Case 1)} $\mathcal{V}_{N}^{1}\leftrightarrow\mathcal{V}_{N}^{1}$
or $\mathcal{V}_{N}^{2}\leftrightarrow\mathcal{V}_{N}^{2}$: $f$
remains unchanged by \eqref{testfcn4spl} and \eqref{testfcn4-2spl}.\medskip{}

\noindent \textbf{(Case 2)} $\mathcal{V}_{N}^{1}\leftrightarrow\mathcal{V}_{N}^{2}$:
There are at least $N-1$ particles in $\{y_{1},\,y_{2}\}$, so the
summation behaves as $O(d_{N}^{2}m_{\star}^{N})$.\medskip{}

\noindent \textbf{(Case 3)} $\mathcal{V}_{N}^{1}\leftrightarrow\mathcal{V}_{N}^{3}$:
This is impossible.\medskip{}

\noindent \textbf{(Case 4)} $\mathcal{V}_{N}^{2}\leftrightarrow\mathcal{V}_{N}^{3}$:
This case can be bounded by
\[
\begin{aligned} & C\sum_{\ell=2}^{N-2}\sum_{\ell'=1}^{N-\ell-1}\frac{Nd_{N}^{3}}{(N-\ell-\ell')\ell'}m_{\star}^{\ell+\ell'}\{1-f(\xi_{N-\ell-\ell'}^{x_{1}x_{2}})\}^{2}\\
 & \le C\sum_{\ell=2}^{N-2}\sum_{\ell'=1}^{N-\ell-1}\frac{Nd_{N}^{3}}{(N-\ell-\ell')\ell'}m_{\star}^{\ell+\ell'}\cdot\frac{(\ell+\ell')^{2}}{N^{2}}=O\Big(\frac{d_{N}^{3}}{N^{2}}\Big).
\end{aligned}
\]
\medskip{}

\noindent \textbf{(Case 5)} $\mathcal{V}_{N}^{3}\leftrightarrow\mathcal{V}_{N}^{3}$:
By \eqref{testfcn4-3spl}, the summation becomes
\[
\sum_{i=1}^{2}\sum_{p=1}^{2}\sum_{k=2}^{N-3}\sum_{\ell=1}^{N-k-2}\sum_{\ell'=1}^{N-k-\ell-1}\mu_{N}(\xi_{k,\,\ell,\,\ell'}^{x_{i}y_{p}y_{q}x_{j}})\cdot k(d_{N}+\ell)r(x_{i},\,y_{p})\{f(\xi_{k-1,\,\ell+1,\,\ell'}^{x_{i}y_{p}y_{q}x_{j}})-f(\xi_{k,\,\ell,\,\ell'}^{x_{i}y_{p}y_{q}x_{j}})\}^{2},
\]
where $\{i,\,j\}=\{p,\,q\}=\{1,\,2\}$. This is bounded by
\[
\begin{aligned} & C\sum_{k=2}^{N-3}\sum_{\ell=1}^{N-k-2}\sum_{\ell'=1}^{N-k-\ell-1}\frac{Nd_{N}^{3}}{\ell'(N-k-\ell-\ell')}m_{\star}^{\ell+\ell'}\cdot\{f(\xi_{k-1}^{x_{1}x_{2}})-f(\xi_{k}^{x_{1}x_{2}})\}^{2}\\
 & \le C\sum_{k=2}^{N-3}\sum_{\ell=1}^{N-k-2}\sum_{\ell'=1}^{N-k-\ell-1}\frac{Nd_{N}^{3}}{\ell'(N-k-\ell-\ell')}m_{\star}^{\ell+\ell'}\cdot\frac{1}{N^{2}}=O\Big(\frac{d_{N}^{3}}{N}\log N\Big).
\end{aligned}
\]
This concludes the proof of Lemma \ref{lem4spl}.
\end{proof}

\subsection{Proof of Proposition \ref{UBspl}}

Thus, we are in the position to prove Proposition \ref{UBspl}. 
\begin{proof}[Proof of Proposition \ref{UBspl}]
 By Lemmas \ref{lem1spl}, \ref{lem2spl}, \ref{lem3spl}, and \ref{lem4spl},
\[
D_{N}(f_{\textup{test}})\le\frac{d_{N}^{2}}{2N\mathfrak{R}}+O\Big(\frac{d_{N}^{2}}{N^{2}}\Big)+O(d_{N}^{2}m_{\star}^{N})+O\Big(\frac{d_{N}^{3}}{N}\log N\Big).
\]
Sending $N\to\infty$, as $\lim_{N\to\infty}d_{N}\log N=0$ and $d_{N}$
decays subexponentially, we have 
\[
\limsup_{N\to\infty}\frac{N}{d_{N}^{2}}D_{N}(f_{\textup{test}})\le\frac{1}{2\mathfrak{R}}.
\]
Therefore, by Theorem \ref{Diri-Thom}, we obtain the desired result.
\end{proof}

\section{\label{s-LBspl}Lower Bound for Capacities: Simple Case}

In this section, we assume Condition \ref{splcond} and establish
the lower bound for $\mathrm{Cap}_{N}(\mathcal{E}_{N}^{x_{1}},\,\mathcal{E}_{N}^{x_{2}})$.
Once more, we recall the notions from Subsection \ref{ss-stsspl}.
The following proposition is our main objective. 
\begin{prop}[Lower bound for capacities: Simple case]
\label{LBspl} Under the conditions of Theorem \ref{mainspl}, the
following inequality holds. 
\begin{equation}
\liminf_{N\to\infty}\frac{N}{d_{N}^{2}}\mathrm{Cap}_{N}(\mathcal{E}_{N}^{x_{1}},\,\mathcal{E}_{N}^{x_{2}})\ge\frac{1}{2\mathfrak{R}}.\label{LBeqspl}
\end{equation}
\end{prop}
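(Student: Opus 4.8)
The plan is to extract the matching lower bound from the generalized Thomson principle (Theorem~\ref{genThom}), which for any non-trivial flow $\psi$ on $\mathcal{H}_N$ yields
\[
\mathrm{Cap}_N\big(\mathcal{E}_N^{x_1},\mathcal{E}_N^{x_2}\big)\ \ge\ \frac{1}{\left\Vert \psi\right\Vert _{N}^{2}}\bigg[\sum_{\eta\in\mathcal{H}_N}h(\eta)\,(\mathrm{div}\,\psi)(\eta)\bigg]^2,
\]
where $h=h_{\mathcal{E}_N^{x_1},\mathcal{E}_N^{x_2}}$. The test flow $\psi$ will be a divergence-corrected, asymptotic version of the flow $\Phi_{f_{\mathrm{test}}}$ induced via \eqref{fcnflow} by the test function of Section~\ref{s-UBspl}. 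Recalling the electrical-network reading of $\mathfrak{R}^N$ from Subsection~\ref{ss-prenotspl}, the natural choice is to support $\psi$ on $\mathcal{U}_N=\bigcup_{p=1}^2\mathcal{A}_N^{\{x_1,y_p,x_2\}}$ and, at each level $\eta_{x_1}+\eta_{y_p}+\eta_{x_2}=N$, route current from $\xi_N^{x_1}$ towards $\xi_N^{x_2}$ splitting between the two parallel branches through $y_1$ and $y_2$ in proportion to their conductances. Replacing the coefficients of $\Phi_{f_{\mathrm{test}}}$ by their leading-order asymptotics (this is the ``replace the test flow by its asymptotic limit'' step, to be carried out in Subsection~\ref{ss-testflowdefspl}) makes $\psi$ divergence-free on $\mathcal{U}_N\setminus(\mathcal{E}_N^{x_1}\cup\mathcal{E}_N^{x_2})$ apart from a small remainder supported on the one-dimensional tubes $\mathcal{A}_N^{x_i,y_p}$ near the valleys.

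With such a $\psi$, the argument splits into two estimates. First, in Subsection~\ref{ss-flownormspl} I would show $\left\Vert \psi\right\Vert _{N}^{2}\le \frac{d_N^{2}}{2N}\big(\frac{1}{\mathfrak{R}}+o(1)\big)$; this is essentially the computation already carried out for $\Sigma_1(f)$ in Lemma~\ref{lem1spl}, now run as an upper bound for the modified flow, and it is where the effective resistance $\mathfrak{R}$ of \eqref{Tdef} enters. Second, in Subsection~\ref{ss-remflowspl} I would show $\sum_{\eta\in\mathcal{H}_N}h(\eta)(\mathrm{div}\,\psi)(\eta)\simeq \frac{d_N^2}{2N\mathfrak{R}}$, by decomposing $\mathrm{div}\,\psi$ into three pieces: the flux out of the source $\mathcal{E}_N^{x_1}$, where $h\equiv1$ and which equals the total current $\simeq\frac{d_N^2}{2N\mathfrak{R}}$; the flux into the sink $\mathcal{E}_N^{x_2}$, where $h\equiv0$, so it drops out; and the remainder divergence on $\mathcal{H}_N\setminus(\mathcal{E}_N^{x_1}\cup\mathcal{E}_N^{x_2})$. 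The remainder sits on the tubes $\mathcal{A}_N^{x_1,y_p}$ and $\mathcal{A}_N^{x_2,y_p}$, on which Lemma~\ref{eqpotest} (its hypotheses being supplied by \eqref{splcondeq1}) gives $h=1+o(1)$ near $\xi_N^{x_1}$ and $h=o(1)$ near $\xi_N^{x_2}$; together with a crude bound on the total remainder divergence and the hypothesis \eqref{dNcond3spl}, this forces the remainder contribution to be $o(d_N^2/N)$.

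Combining the two estimates,
\[
\mathrm{Cap}_N\big(\mathcal{E}_N^{x_1},\mathcal{E}_N^{x_2}\big)\ \ge\ \frac{\big(\frac{d_N^2}{2N\mathfrak{R}}(1+o(1))\big)^2}{\frac{d_N^{2}}{2N}\big(\frac{1}{\mathfrak{R}}+o(1)\big)}\ =\ \frac{d_N^2}{2N\mathfrak{R}}\,(1+o(1)),
\]
so that $\frac{N}{d_N^2}\mathrm{Cap}_N(\mathcal{E}_N^{x_1},\mathcal{E}_N^{x_2})\ge \frac{1}{2\mathfrak{R}}+o(1)$, and \eqref{LBeqspl} follows upon letting $N\to\infty$. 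The main obstacle is the remainder step: one must simultaneously engineer $\psi$ so that its residual divergence is supported precisely on the tubes where the equilibrium potential is understood, and push Lemma~\ref{eqpotest} through with error $o(d_N^2/N)$. This is exactly why a direct use of $\Phi_{f_{\mathrm{test}}}$, or the iterated Cauchy--Schwarz inequalities of \cite[Subsection~5.1]{B-D-G}, is not sharp enough (cf.\ Remark~\ref{obstrmk}), and it is here that the technical hypothesis \eqref{dNcond3spl} is consumed; everything else is a re-run of the Dirichlet-form bookkeeping of Section~\ref{s-UBspl}.
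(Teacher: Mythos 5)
Your overall strategy is the paper's: apply the generalized Thomson principle (Theorem \ref{genThom}) with a test flow obtained by replacing the coefficients of $\Phi_{f_{\mathrm{test}}}$ by their asymptotic limits, supported on $\mathcal{U}_N$ and divergence-free in the interior of the three-site tubes; bound its norm by the same computation as for $\Sigma_1(f)$; evaluate $\sum_\eta h\,(\mathrm{div}\,\psi)(\eta)$ using Lemma \ref{eqpotest}; and conclude by the scale-invariant quotient, whose arithmetic you carry out correctly. This is exactly Subsections \ref{ss-testflowdefspl}--\ref{ss-remflowspl} and Lemmas \ref{lem5spl}--\ref{lem6spl}.

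The one place where your bookkeeping does not match the construction you describe is the divergence decomposition. For the flow \eqref{testflowspl}--\eqref{testflow2spl} (the ``asymptotic modification'' you have in mind), the divergence at the valleys $\xi_N^{x_1},\xi_N^{x_2}$ is exactly zero, because the flow is only defined on edges $(\eta,\sigma^{x_i,y_p}\eta)$ with $\eta_{x_1},\eta_{x_2}\ge1$ and the two-site tube edges carry no flow; the entire current is instead injected along the configurations $\zeta_\ell^{x_1,y_p}$, $1\le\ell\le\lfloor N/2\rfloor$, with geometrically decaying weights, and extracted along $\zeta_\ell^{x_2,y_p}$. Hence ``the flux out of the source $\mathcal{E}_N^{x_1}$'' is not $\simeq d_N^2/(2N\mathfrak{R})$ but $0$, and the tube divergence is not an $o(d_N^2/N)$ remainder that a crude bound can kill --- in your normalization it sums precisely to $\simeq d_N^2/(2N\mathfrak{R})$ and \emph{is} the main term. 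The correct accounting (Lemma \ref{lem6spl}) is: sum the divergence over the $x_1$-tubes, where Lemma \ref{eqpotest} gives $h=1+o(1)$, to recover the full current; the $x_2$-tube contribution is negligible because $h=o(1)$ there. (Alternatively one could extend the flow along the two-site tube edges to push the divergence into the valleys at negligible extra norm cost, but then Lemma \ref{eqpotest} would be unnecessary --- your write-up mixes the two variants.) A minor additional point: \eqref{dNcond3spl} is not needed in this lower bound; in the paper it is consumed in the remainder estimates for the Dirichlet form of the upper bound (Subsection \ref{ss-remDirispl}), while Lemmas \ref{lem5spl} and \ref{lem6spl} only use subexponential decay of $d_N$ and \eqref{dNcond}.
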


As mentioned after Remark \ref{Thomrmk}, the procedure involves the
use of a test flow, which is in some sense close to $c\Phi_{f_{\textup{test}}}$,
as $\psi$ in Theorem \ref{genThom}. The main difficulty is to find
a suitable flow, such that
\[
\sum_{\eta\in\mathcal{H}_{N}}h_{\mathcal{E}_{N}^{x_{1}},\,\mathcal{E}_{N}^{x_{2}}}(\eta)(\mathrm{div}\,\psi)(\eta)
\]
can be easily calculated. Here, the major obstacle is that the exact
values of $h_{\mathcal{E}_{N}^{x_{1}},\,\mathcal{E}_{N}^{x_{2}}}$
are unknown except on the one-dimensional tubes, $\mathcal{A}_{N}^{ab}$
for $a,\,b\in S$, as shown in Section \ref{s-HT}. Thus, the objective
is to find a proper approximating flow $\psi_{\textup{test}}$ whose
divergence can be neglected outside those tubes.

\subsection{\label{ss-testflowdefspl}Construction of test flow $\psi_{\textup{test}}$}

In this subsection, we build the test flow $\psi=\psi_{\textup{test}}$
on $\mathcal{H}_{N}$. As mentioned above, the key here is as follows:
We must construct $\psi$ such that 
\begin{enumerate}
\item the flow norm of $\psi$ is asymptotically equal to $c\Phi_{f_{\textup{test}}}$,
$c\ne0$, 
\item the divergence of $\psi$ can be summed up in the sense of the right-hand
side of \eqref{genThomeq}. 
\end{enumerate}
To overcome both issues, \emph{we modify $\Phi_{f_{\textup{test}}}$
properly, so that the divergence vanishes on $\mathcal{A}_{N}^{x_{1}y_{p}x_{2}}\setminus(\mathcal{A}_{N}^{x_{1}y_{p}}\cup\mathcal{A}_{N}^{x_{2}y_{p}})$:}

First, we define, for $1\le p\le2$, $\ell\in\llbracket0,\,\frac{N}{2}-1\rrbracket$,
and $k\in\llbracket1,\,N-\ell-1\rrbracket$,
\begin{equation}
\psi_{0}(\xi_{k,\,\ell}^{x_{1}y_{p}x_{2}},\,\xi_{k-1,\,\ell+1}^{x_{1}y_{p}x_{2}})=\frac{m_{\star}(y_{p})^{\ell}\big/(\frac{N-\ell-k-1}{r(x_{1},\,y_{p})}+\frac{k+\ell}{r(x_{2},\,y_{p})})}{\mathfrak{R}\sum_{q=1}^{2}\frac{(1-m_{\star}(y_{q}))^{-1}}{\frac{N-\ell-k-1}{r(x_{1},\,y_{q})}+\frac{k+\ell}{r(x_{2},\,y_{q})}}},\label{testflowspl}
\end{equation}
\begin{equation}
\psi_{0}(\xi_{k,\,\ell}^{x_{1}y_{p}x_{2}},\,\xi_{k,\,\ell+1}^{x_{1}y_{p}x_{2}})=\frac{-m_{\star}(y_{p})^{\ell}\big/(\frac{N-\ell-k-1}{r(x_{1},\,y_{p})}+\frac{k+\ell}{r(x_{2},\,y_{p})})}{\mathfrak{R}\sum_{q=1}^{2}\frac{(1-m_{\star}(y_{q}))^{-1}}{\frac{N-\ell-k-1}{r(x_{1},\,y_{q})}+\frac{k+\ell}{r(x_{2},\,y_{q})}}},\label{testflow2spl}
\end{equation}
and $0$ otherwise.

Observe that by the above construction, $(\mathrm{div}\,\psi_{0})(\xi^{x_{i}})=0$
for $1\le i\le2$ and $(\mathrm{div}\,\psi_{0})(\eta)=0$ for all
$\eta\in\mathcal{V}_{N}$.

However, it holds that $(\mathrm{div}\,\psi_{0})(\xi_{k,\,\ell}^{x_{1}y_{p}x_{2}})\ne0$
for $\ell\in\llbracket1,\,\frac{N}{2}\rrbracket$ and $k\in\llbracket0,\,N-\ell\rrbracket$.
\emph{We overcome this issue by adding correction flows to $\psi_{0}$
and make the divergence to be zero.}

Before the exact definition, we calculate the non-zero term $(\mathrm{div}\,\psi_{0})(\xi_{k,\,\ell}^{x_{1}y_{p}x_{2}})$.
We define, for $k\in\llbracket1,\,N-1\rrbracket$,
\begin{equation}
\mathfrak{A}(N,\,k):=\frac{1}{\mathfrak{R}}\frac{N-1}{(\frac{\frac{N-k}{r(x_{1},\,y_{2})}+\frac{k-1}{r(x_{2},\,y_{2})}}{1-m_{\star}(y_{1})}+\frac{\frac{N-k}{r(x_{1},\,y_{1})}+\frac{k-1}{r(x_{2},\,y_{1})}}{1-m_{\star}(y_{2})})(\frac{\frac{N-k-1}{r(x_{1},\,y_{2})}+\frac{k}{r(x_{2},\,y_{2})}}{1-m_{\star}(y_{1})}+\frac{\frac{N-k-1}{r(x_{1},\,y_{1})}+\frac{k}{r(x_{2},\,y_{1})}}{1-m_{\star}(y_{2})})}.\label{ANkdef}
\end{equation}
Then, by the estimate
\[
\frac{N-k}{r(x_{1},\,y_{2})}+\frac{k-1}{r(x_{2},\,y_{2})}\ge\frac{N-k}{C}+\frac{k-1}{C}=\frac{N-1}{C}
\]
and three additional similar bounds, it is straightforward that
\begin{equation}
\mathfrak{A}(N,\,k)\le\frac{C}{N}.\label{ANkbound}
\end{equation}
The next lemma represents $(\mathrm{div}\,\psi_{0})(\xi_{k,\,\ell}^{x_{1}y_{p}x_{2}})$
in terms of $\mathfrak{A}(N,\,k+\ell)$.
\begin{lem}
For $1\le p\le2$, $\ell\in\llbracket1,\,\frac{N}{2}\rrbracket$,
and $k\in\llbracket1,\,N-\ell-1\rrbracket$, we have
\begin{equation}
(\mathrm{div}\,\psi_{0})(\xi_{k,\,\ell}^{x_{1}y_{p}x_{2}})=\frac{m_{\star}(y_{p})^{\ell-1}}{1-m_{\star}(y_{s})}\Big[\frac{1}{r(x_{1},\,y_{s})r(x_{2},\,y_{p})}-\frac{1}{r(x_{2},\,y_{s})r(x_{1},\,y_{p})}\Big]\mathfrak{A}(N,\,k+\ell),\label{divpsi0}
\end{equation}
where $\{p,\,s\}=\{1,\,2\}$.
\end{lem}

\begin{proof}
By \eqref{testflowspl} and \eqref{testflow2spl}, $(\mathrm{div}\,\psi_{0})(\xi_{k,\,\ell}^{x_{1}y_{p}x_{2}})$
equals
\[
\begin{aligned} & \psi_{0}(\xi_{k,\,\ell}^{x_{1}y_{p}x_{2}},\,\xi_{k+1,\,\ell-1}^{x_{1}y_{p}x_{2}})+\psi_{0}(\xi_{k,\,\ell}^{x_{1}y_{p}x_{2}},\,\xi_{k,\,\ell-1}^{x_{1}y_{p}x_{2}})\\
 & =-\frac{m_{\star}(y_{p})^{\ell-1}\big/(\frac{N-\ell-k-1}{r(x_{1},\,y_{p})}+\frac{k+\ell}{r(x_{2},\,y_{p})})}{\mathfrak{R}\sum_{q=1}^{2}\frac{(1-m_{\star}(y_{q}))^{-1}}{\frac{N-\ell-k-1}{r(x_{1},\,y_{q})}+\frac{k+\ell}{r(x_{2},\,y_{q})}}}+\frac{m_{\star}(y_{p})^{\ell-1}\big/(\frac{N-\ell-k}{r(x_{1},\,y_{p})}+\frac{k+\ell-1}{r(x_{2},\,y_{p})})}{\mathfrak{R}\sum_{q=1}^{2}\frac{(1-m_{\star}(y_{q}))^{-1}}{\frac{N-\ell-k}{r(x_{1},\,y_{q})}+\frac{k+\ell-1}{r(x_{2},\,y_{q})}}},
\end{aligned}
\]
where the first line holds since the other two flow values cancel
out with each other. Noting that $\{p,\,s\}=\{1,\,2\}$, we rearrange
the right-hand side as $m_{\star}(y_{p})^{\ell-1}/\mathfrak{R}$ times
\[
-\frac{\frac{N-\ell-k-1}{r(x_{1},\,y_{s})}+\frac{k+\ell}{r(x_{2},\,y_{s})}}{\frac{\frac{N-\ell-k-1}{r(x_{1},\,y_{s})}+\frac{k+\ell}{r(x_{2},\,y_{s})}}{1-m_{\star}(y_{p})}+\frac{\frac{N-\ell-k-1}{r(x_{1},\,y_{p})}+\frac{k+\ell}{r(x_{2},\,y_{p})}}{1-m_{\star}(y_{s})}}+\frac{\frac{N-\ell-k}{r(x_{1},\,y_{s})}+\frac{k+\ell-1}{r(x_{2},\,y_{s})}}{\frac{\frac{N-\ell-k}{r(x_{1},\,y_{s})}+\frac{k+\ell-1}{r(x_{2},\,y_{s})}}{1-m_{\star}(y_{p})}+\frac{\frac{N-\ell-k}{r(x_{1},\,y_{p})}+\frac{k+\ell-1}{r(x_{2},\,y_{p})}}{1-m_{\star}(y_{s})}}.
\]
Reducing to a common denominator, the last display equals
\begin{equation}
\begin{aligned} & -\Big(\frac{N-\ell-k-1}{r(x_{1},\,y_{s})}+\frac{k+\ell}{r(x_{2},\,y_{s})}\Big)\Big(\frac{\frac{N-\ell-k}{r(x_{1},\,y_{s})}+\frac{k+\ell-1}{r(x_{2},\,y_{s})}}{1-m_{\star}(y_{p})}+\frac{\frac{N-\ell-k}{r(x_{1},\,y_{p})}+\frac{k+\ell-1}{r(x_{2},\,y_{p})}}{1-m_{\star}(y_{s})}\Big)\\
 & +\Big(\frac{N-\ell-k}{r(x_{1},\,y_{s})}+\frac{k+\ell-1}{r(x_{2},\,y_{s})}\Big)\Big(\frac{\frac{N-\ell-k-1}{r(x_{1},\,y_{s})}+\frac{k+\ell}{r(x_{2},\,y_{s})}}{1-m_{\star}(y_{p})}+\frac{\frac{N-\ell-k-1}{r(x_{1},\,y_{p})}+\frac{k+\ell}{r(x_{2},\,y_{p})}}{1-m_{\star}(y_{s})}\Big)
\end{aligned}
\label{divcalculation1}
\end{equation}
divided by
\[
\Big(\frac{\frac{N-\ell-k-1}{r(x_{1},\,y_{s})}+\frac{k+\ell}{r(x_{2},\,y_{s})}}{1-m_{\star}(y_{p})}+\frac{\frac{N-\ell-k-1}{r(x_{1},\,y_{p})}+\frac{k+\ell}{r(x_{2},\,y_{p})}}{1-m_{\star}(y_{s})}\Big)\Big(\frac{\frac{N-\ell-k}{r(x_{1},\,y_{s})}+\frac{k+\ell-1}{r(x_{2},\,y_{s})}}{1-m_{\star}(y_{p})}+\frac{\frac{N-\ell-k}{r(x_{1},\,y_{p})}+\frac{k+\ell-1}{r(x_{2},\,y_{p})}}{1-m_{\star}(y_{s})}\Big).
\]
Thus, according to \eqref{ANkdef}, it remains to prove that \eqref{divcalculation1}
equals
\[
\frac{N-1}{1-m_{\star}(y_{s})}\times\Big[\frac{1}{r(x_{1},\,y_{s})r(x_{2},\,y_{p})}-\frac{1}{r(x_{2},\,y_{s})r(x_{1},\,y_{p})}\Big].
\]
In \eqref{divcalculation1}, the two terms involving $1-m_{\star}(y_{p})$
cancel out with each other. Thus, \eqref{divcalculation1} becomes
$(1-m_{\star}(y_{s}))^{-1}$ times
\[
\begin{aligned} & -\Big(\frac{N-\ell-k-1}{r(x_{1},\,y_{s})}+\frac{k+\ell}{r(x_{2},\,y_{s})}\Big)\Big(\frac{N-\ell-k}{r(x_{1},\,y_{p})}+\frac{k+\ell-1}{r(x_{2},\,y_{p})}\Big)\\
 & +\Big(\frac{N-\ell-k}{r(x_{1},\,y_{s})}+\frac{k+\ell-1}{r(x_{2},\,y_{s})}\Big)\Big(\frac{N-\ell-k-1}{r(x_{1},\,y_{p})}+\frac{k+\ell}{r(x_{2},\,y_{p})}\Big).
\end{aligned}
\]
Again, the terms cancel out with each other so that \eqref{divcalculation1}
equals
\[
(1-m_{\star}(y_{s}))^{-1}\Big[\frac{N-1}{r(x_{1},\,y_{s})r(x_{2},\,y_{p})}-\frac{N-1}{r(x_{2},\,y_{s})r(x_{1},\,y_{p})}\Big],
\]
as wanted.
\end{proof}
Now, for all $1\le p\le2$ and $k\in\llbracket1,\,N-1\rrbracket$,
we define a correction flow $\phi_{p,\,k}$ as follows.
\begin{itemize}
\item Suppose that $\frac{N}{2}<k\le N-1$. Then, for $\ell\in\llbracket1,\,\frac{N}{2}\rrbracket$,
\begin{equation}
\phi_{p,\,k}(\xi_{k-\ell,\,\ell}^{x_{1}y_{p}x_{2}},\,\xi_{k-\ell+1,\,\ell-1}^{x_{1}y_{p}x_{2}}):=-\sum_{t=\ell}^{\lfloor N/2\rfloor}(\mathrm{div}\,\psi_{0})(\xi_{k-t,\,t}^{x_{1}y_{p}x_{2}}),\label{e_phipk1}
\end{equation}
and $\phi_{p,\,k}=0$ on all other edges.
\item Suppose that $1\le k\le\frac{N}{2}$. Then, for $\ell\in\llbracket1,\,k-1\rrbracket$,
\begin{equation}
\phi_{p,\,k}(\xi_{k-\ell,\,\ell}^{x_{1}y_{p}x_{2}},\,\xi_{k-\ell+1,\,\ell-1}^{x_{1}y_{p}x_{2}}):=-\sum_{t=\ell}^{k-1}(\mathrm{div}\,\psi_{0})(\xi_{k-t,\,t}^{x_{1}y_{p}x_{2}}),\label{e_phipk2}
\end{equation}
and $\phi_{p,\,k}=0$ on all other edges.
\end{itemize}
Finally, we define a flow

\[
\psi=\psi_{\textup{test}}:=\psi_{0}+\sum_{p=1}^{2}\sum_{k=1}^{N-1}\phi_{p,\,k}.
\]
Then, the flows $\phi_{p,\,k}$ for $1\le p\le2$ and $k\in\llbracket1,\,N-1\rrbracket$
cancel the divergence of $\psi_{0}$ at each $\xi_{k-\ell,\,\ell}^{x_{1}y_{p}x_{2}}\in\mathcal{A}_{N}^{x_{1}y_{p}x_{2}}$.
Thus, we obtain that $(\mathrm{div}\,\psi)(\eta)=0$ for all $\eta$
in
\[
\mathcal{A}_{N}^{x_{1}y_{p}x_{2}}\setminus(\mathcal{A}_{N}^{x_{1}y_{p}}\cup\mathcal{A}_{N}^{x_{2}y_{p}}\cup\mathcal{A}_{N}^{x_{1}x_{2}})\text{ for }1\le p\le2.
\]

\subsection{\label{ss-flownormspl}Flow norm of $\psi_{\textup{test}}$}

In this subsection, we calculate the flow norm of the test flow $\psi$. 
\begin{lem}
\label{lem5spl}Under the conditions of Theorem \ref{mainspl}, it
holds that 
\[
\|\psi\|_{N}^{2}\le(1+o(1))\frac{2N}{d_{N}^{2}\mathfrak{R}}.
\]
\end{lem}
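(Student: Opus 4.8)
The plan is to evaluate $\|\psi\|_N^2$ directly from \eqref{flnormdef}, exploiting that $\psi=\psi_{\mathrm{test}}$ is supported on very few edges: by \eqref{testflowspl}--\eqref{testflow2spl}, $\psi(\eta,\zeta)\neq0$ forces $\zeta=\sigma^{x_i,y_p}\eta$ for some $1\le i,p\le2$ and $\eta\in\mathcal{A}_N^{\{x_1,y_p,x_2\}}$ with $\eta_{x_1},\eta_{x_2}\ge1$ and $0\le\eta_{y_p}\le\lfloor N/2\rfloor-1$. I would parametrize such an $\eta$ by $\eta_{x_1}=k$, $\eta_{y_p}=\ell$, $\eta_{x_2}=N-k-\ell$ and, using anti-symmetry of $\psi$ together with reversibility of $\mu_N$ (so that $\mu_N(\eta)\mathbf{q}_N(\eta,\zeta)=\mu_N(\zeta)\mathbf{q}_N(\zeta,\eta)$ and the $\tfrac12$ in \eqref{flnormdef} is exactly absorbed by the double counting of each unordered edge), rewrite $\|\psi\|_N^2=\sum_{p=1}^2\sum_{k,\ell}\sum_{i=1}^2\psi(\eta,\sigma^{x_i,y_p}\eta)^2\big/\big(\mu_N(\eta)\mathbf{q}_N(\eta,\sigma^{x_i,y_p}\eta)\big)$. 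Substituting $\mathbf{q}_N(\eta,\sigma^{x_i,y_p}\eta)=\eta_{x_i}(d_N+\eta_{y_p})r(x_i,y_p)$, the explicit invariant measure of Proposition \ref{mun} (with $m_\star(x_1)=m_\star(x_2)=1$ by Condition \ref{splcond}, so $\mu_N(\eta)=Z_N^{-1}w_N(k)w_N(\ell)w_N(N-k-\ell)m_\star(y_p)^{\ell}$), and the uniform asymptotics \eqref{munprop} for $w_N$ and $Z_N$, turns the norm into an explicit elementary triple sum.

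The next step is a sequence of algebraic cancellations. Writing $D_p(\eta)=\tfrac{\eta_{x_2}-1}{r(x_1,y_p)}+\tfrac{\eta_{x_1}+\eta_{y_p}}{r(x_2,y_p)}$ and $S(\eta)=\sum_{q=1}^2\big[(1-m_\star(y_q))D_q(\eta)\big]^{-1}$ for the quantities occurring in \eqref{testflowspl}, one has $\psi(\eta,\sigma^{x_1,y_p}\eta)^2=\psi(\eta,\sigma^{x_2,y_p}\eta)^2=m_\star(y_p)^{2\ell}\big/\big(\mathfrak{R}^2 D_p(\eta)^2 S(\eta)^2\big)$, whereas the identity $(d_N+n)w_N(n)=(n+1)w_N(n+1)$ from \eqref{munprop} gives $\mu_N(\eta)\mathbf{q}_N(\eta,\sigma^{x_i,y_p}\eta)=(1+o(1))\tfrac{N d_N^2}{2}\,m_\star(y_p)^{\ell}\,r(x_i,y_p)\big/\eta_{x_{3-i}}$. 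Combining the two terms $i=1,2$, several cancellations occur: one power of the geometric weight $m_\star(y_p)^{\ell}$ disappears against $\mu_N(\eta)$, the rate factors $\eta_{x_1},\eta_{x_2}$ cancel those implicit in $w_N(\eta_{x_1})w_N(\eta_{x_2})$, and $\tfrac1{\eta_{x_1}r(x_1,y_p)}+\tfrac1{\eta_{x_2}r(x_2,y_p)}$ equals $D_p(\eta)/(\eta_{x_1}\eta_{x_2})$ up to lower-order terms; the upshot is that the $(k,\ell)$-block of $\|\psi\|_N^2$ equals $(1+o(1))\tfrac{2}{N d_N^2\mathfrak{R}^2}\cdot m_\star(y_p)^{\ell}\big/\big(D_p(k,\ell)S(k,\ell)^2\big)$, where $D_p(k,\ell),S(k,\ell)$ denote $D_p(\eta),S(\eta)$ under the above parametrization. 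Since this summand is only polynomially varying in $\ell$ while carrying the geometric factor $m_\star(y_p)^{\ell}$, the $\ell$-sum contributes $(1+o(1))(1-m_\star(y_p))^{-1}$ times its $\ell=0$ value, the tail $\ell>\lfloor N/2\rfloor$ being exponentially small exactly as in \eqref{lem1eq2spl}.

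This leaves $\|\psi\|_N^2\le(1+o(1))\tfrac{2}{N d_N^2\mathfrak{R}^2}\sum_{p=1}^2\sum_{k}\tfrac1{(1-m_\star(y_p))D_p(k,0)S(k,0)^2}$, with $D_p(k,0)=\tfrac{N-k-1}{r(x_1,y_p)}+\tfrac{k}{r(x_2,y_p)}$, and here the cancellation $\sum_{p=1}^2\big[(1-m_\star(y_p))D_p(k,0)\big]^{-1}=S(k,0)$ makes the $p$-sum collapse to $\sum_k 1/S(k,0)$. Finally, with $t=k/N$ one has $N S(k,0)\to\sum_{q=1}^2\big[(1-m_\star(y_q))(\tfrac{1-t}{r(x_1,y_q)}+\tfrac{t}{r(x_2,y_q)})\big]^{-1}$ uniformly on compact subsets of $(0,1)$, so a Riemann-sum comparison identifies $\sum_k 1/S(k,0)$ with $(1+o(1))N^2\mathfrak{R}$ by the very definition \eqref{Tdef} of $\mathfrak{R}$; assembling the powers of $N$ and $d_N$ yields $\|\psi\|_N^2\le(1+o(1))\tfrac{2N}{d_N^2\mathfrak{R}}$.

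The main obstacle is upgrading this chain of heuristics to a genuine one-sided bound with a uniform $o(1)$, for which the care needed mirrors that already exercised in Section \ref{s-UBspl}. Three spots are delicate: (i) the ends of the tube, $k$ near $1$ or near $N-\ell-1$ (all particles but a few piled on one of $x_1,x_2$), where the approximations $w_N(m)\simeq d_N/m$, $D_p(k,\ell)\approx D_p(k,0)$, and $\tfrac{\eta_{x_2}}{r(x_1,y_p)}+\tfrac{\eta_{x_1}}{r(x_2,y_p)}\approx D_p(\eta)$ are coarsest --- these should be absorbed into $1+o(1)$ via bounds of the type $\tfrac{N-k-1}{N-\ell-k-1}\le\ell+1$ as in the proof of Lemma \ref{lem1spl}, together with the geometric weight in $\ell$; (ii) the Riemann-sum replacement of $\sum_k 1/S(k,0)$ by $N^2\mathfrak{R}$, whose relative error is $O(1/N)$ and therefore harmless; and (iii) checking that passing from the discretized constants to $\mathfrak{R}$ and truncating the geometric series inflate the norm only by $1+o(1)$, which is where the hypotheses of Theorem \ref{mainspl} (subexponential decay of $d_N$ and \eqref{dNcond3spl}) enter. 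I expect (i), the uniform control of the boundary layer of the tube where the conductance approximation degrades, to be the genuinely delicate step; everything after the collapse of the $p$-sum is bookkeeping of powers of $N$ and $d_N$ that has already appeared in the upper-bound computation.
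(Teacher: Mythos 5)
Your proposal is correct and follows essentially the same route as the paper: expand $\left\Vert \psi\right\Vert _{N}^{2}$ over the tube edges where the flow is supported, use \eqref{munprop} to replace $\mu_{N}\mathbf{q}_{N}$ by its uniform asymptotics, control the $\ell$-dependence with the geometric factor $m_{\star}(y_{p})^{\ell}$, pass to a Riemann sum in $k$, and exploit the cancellation of the $p$-sum against one power of the denominator to get $\mathfrak{R}^{-2}\cdot\mathfrak{R}=\mathfrak{R}^{-1}$. The only difference is cosmetic ordering (you collapse the $p$-sum at the discrete level before the Riemann-sum step, while the paper integrates first and collapses at the end, citing the error control of Lemma \ref{lem1spl}), so no further comparison is needed.
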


\begin{proof}
By \eqref{testflowspl}, \eqref{testflow2spl}, and Definition \ref{flowdef},
we have
\begin{equation}
\begin{aligned}\|\psi\|_{N}^{2}= & \sum_{p=1}^{2}\sum_{\ell=0}^{\lfloor N/2\rfloor-1}\sum_{k=1}^{N-\ell-1}\\
 & \Big[\frac{(\psi(\xi_{k,\,\ell}^{x_{1}y_{p}x_{2}},\,\xi_{k-1,\,\ell+1}^{x_{1}y_{p}x_{2}}))^{2}}{\mu_{N}(\xi_{k,\,\ell}^{x_{1}y_{p}x_{2}})k(d_{N}+\ell)r(x_{1},\,y_{p})}+\frac{(\psi(\xi_{k,\,\ell}^{x_{1}y_{p}x_{2}},\,\xi_{k,\,\ell+1}^{x_{1}y_{p}x_{2}}))^{2}}{\mu_{N}(\xi_{k,\,\ell}^{x_{1}y_{p}x_{2}})(N-\ell-k)(d_{N}+\ell)r(x_{2},\,y_{p})}.
\end{aligned}
\label{lem5eq1spl}
\end{equation}
By \eqref{munprop} and \eqref{testflowspl}, the part of \eqref{lem5eq1spl}
including the first fraction inside bracket is asymptotically equivalent
to 
\begin{align*}
\frac{2}{d_{N}^{2}N}\sum_{p=1}^{2}\sum_{\ell=0}^{\lfloor N/2\rfloor-1}\frac{m_{\star}(y_{p})^{\ell}}{\mathfrak{R}^{2}}\sum_{k=1}^{N-\ell-1} & \frac{\frac{N-\ell-k}{r(x_{1},\,y_{p})}\big/(\frac{N-k-\ell-1}{r(x_{1},\,y_{p})}+\frac{k+\ell}{r(x_{2},\,y_{p})})^{2}}{\{\sum_{q=1}^{2}\frac{(1-m_{\star}(y_{q}))^{-1}}{\frac{N-k-\ell-1}{r(x_{1},\,y_{q})}+\frac{k+\ell}{r(x_{2},\,y_{q})}}\}^{2}}.
\end{align*}
Divide $N-\ell-k=(N-k-\ell-1)+1$. Then, as in obtaining \eqref{lem1eq4spl},
the last formula can be bounded from above by 
\begin{equation}
\frac{2}{d_{N}^{2}N}\sum_{p=1}^{2}\frac{N^{2}\mathfrak{R}^{-2}}{1-m_{\star}(y_{p})}\Big[\int_{0}^{1}\frac{\frac{1-t}{r(x_{1},\,y_{p})}\big/(\frac{1-t}{r(x_{1},\,y_{p})}+\frac{t}{r(x_{2},\,y_{p})})^{2}}{\{\sum_{q=1}^{2}\frac{(1-m_{\star}(y_{q}))^{-1}}{\frac{1-t}{r(x_{1},\,y_{q})}+\frac{t}{r(x_{2},\,y_{q})}}\}^{2}}dt+o(1)\Big].\label{lem5eq2spl}
\end{equation}
Similarly, the part of \eqref{lem5eq1spl} including the second fraction
inside bracket is asymptotically bounded from above by 
\begin{equation}
\frac{2}{d_{N}^{2}N}\sum_{p=1}^{2}\frac{N^{2}\mathfrak{R}^{-2}}{1-m_{\star}(y_{p})}\Big[\int_{0}^{1}\frac{\frac{t}{r(x_{2},\,y_{p})}\big/(\frac{1-t}{r(x_{1},\,y_{p})}+\frac{t}{r(x_{2},\,y_{p})})^{2}}{\{\sum_{q=1}^{2}\frac{(1-m_{\star}(y_{q}))^{-1}}{\frac{1-t}{r(x_{1},\,y_{q})}+\frac{t}{r(x_{2},\,y_{q})}}\}^{2}}dt+o(1)\Big].\label{lem5eq3spl}
\end{equation}
Hence, by \eqref{lem5eq1spl}, \eqref{lem5eq2spl}, and \eqref{lem5eq3spl},
we have the following asymptotic upper bound for $\|\psi\|_{N}^{2}$:
\[
\frac{2N}{d_{N}^{2}}\sum_{p=1}^{2}\mathfrak{R}^{-2}\int_{0}^{1}\frac{(1-m_{\star}(y_{p}))^{-1}(\frac{1-t}{r(x_{1},\,y_{p})}+\frac{t}{r(x_{2},\,y_{p})})^{-1}}{\{\sum_{q=1}^{2}\frac{(1-m_{\star}(y_{q}))^{-1}}{\frac{1-t}{r(x_{1},\,y_{q})}+\frac{t}{r(x_{2},\,y_{q})}}\}^{2}}dt=\frac{2N}{d_{N}^{2}}\times\frac{1}{\mathfrak{R}}.
\]
This concludes the proof.
\end{proof}

\subsection{\label{ss-remflowspl}Remaining terms}

Here, we address the remaining terms on the right-hand side of \eqref{genThomeq}
with respect to $\psi$. \emph{To this end, Lemma \ref{eqpotest}
is used to calculate the equilibrium potential near the metastable
valleys.}
\begin{lem}
\label{lem6spl}Under the conditions of Theorem \ref{mainspl}, it
holds that 
\begin{equation}
\sum_{\eta\in\mathcal{H}_{N}\setminus\mathcal{E}_{N}(S_{\star})}h_{\mathcal{E}_{N}^{x_{1}},\,\mathcal{E}_{N}^{x_{2}}}(\eta)(\mathrm{div}\,\psi)(\eta)=\frac{1+o(1)}{\mathfrak{R}}.\label{lem6eq1spl}
\end{equation}
\end{lem}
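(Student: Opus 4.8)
The plan is to evaluate the left-hand side of \eqref{lem6eq1spl} by localizing the support of $\mathrm{div}\,\psi$ and then reading off the equilibrium potential $h=h_{\mathcal{E}_N^{x_1},\mathcal{E}_N^{x_2}}$ there via Lemma \ref{eqpotest}. First, since $\psi$ is supported only on edges lying inside the tubes $\mathcal{A}_N^{\{x_1,y_p,x_2\}}$, $p=1,2$, and, by the construction in Subsection \ref{ss-testflowdefspl}, $(\mathrm{div}\,\psi)$ vanishes on $\mathcal{A}_N^{\{x_1,y_p,x_2\}}\setminus(\mathcal{A}_N^{\{x_1,y_p\}}\cup\mathcal{A}_N^{\{x_2,y_p\}})$, the divergence is nonzero only on the one-dimensional tubes $\mathcal{A}_N^{\{x_1,y_p\}}$ and $\mathcal{A}_N^{\{x_2,y_p\}}$; it also vanishes on $\mathcal{E}_N(S)$ (at $\xi_N^{x_1},\xi_N^{x_2}$ by construction, and at $\xi_N^{y_p}$ because for large $N$ no configuration one step from $\xi_N^{y_p}$ lies in the support of $\psi$, owing to the $\eta_{y_p}\le\lfloor N/2\rfloor-1$ restriction). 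Writing $J_{i,p}=\sum_{\eta\in\mathcal{A}_N^{\{x_i,y_p\}}}h(\eta)(\mathrm{div}\,\psi)(\eta)$, the left-hand side of \eqref{lem6eq1spl} thus equals $\sum_{p=1}^2(J_{1,p}+J_{2,p})$.

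Next I would show $J_{2,p}=o(1)$. On $\mathcal{A}_N^{\{x_2,y_p\}}$ (where $\eta_{x_1}=0$) the only flow-carrying edge out of $\zeta_j^{x_2,y_p}$ is the $y_p\to x_1$ move, which, by the $\eta_{x_1}\ge1$ and $\eta_{y_p}\le\lfloor N/2\rfloor-1$ restrictions in \eqref{testflowspl}, is active only for $1\le j\le\lfloor N/2\rfloor$; a direct substitution gives $|(\mathrm{div}\,\psi)(\zeta_j^{x_2,y_p})|\le Cm_\star(y_p)^{j-1}$ there. Lemma \ref{eqpotest}, applied with $A=\{x_1\}$, $B=\{x_2\}$, $a=x_1$, $b=x_2$, $c=y_p$ (its hypotheses holding by \eqref{splcondeq1}), gives through \eqref{eqpotesteq2} that $h(\zeta_j^{x_2,y_p})=o(1)$ uniformly in $j\le\lfloor N/2\rfloor$, so $|J_{2,p}|\le o(1)\sum_{j\ge1}Cm_\star(y_p)^{j-1}=o(1)$.

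Then I would compute $J_{1,p}$, which carries the main term. On $\mathcal{A}_N^{\{x_1,y_p\}}$ the only flow-carrying edge out of $\zeta_\ell^{x_1,y_p}$ is the $y_p\to x_2$ move, active for $1\le\ell\le\lfloor N/2\rfloor$; substituting into \eqref{testflow2spl} and using that the adjacent configuration has $\eta_{x_2}-1=0$ and $\eta_{x_1}+\eta_{y_p}=N-1$, the expression collapses to
\[
(\mathrm{div}\,\psi)(\zeta_\ell^{x_1,y_p})=\frac{m_\star(y_p)^{\ell-1}\,r(x_2,y_p)}{\mathfrak{R}\sum_{q=1}^2\frac{r(x_2,y_q)}{1-m_\star(y_q)}},\qquad 1\le\ell\le\lfloor N/2\rfloor .
\]
By \eqref{eqpotesteq1} of Lemma \ref{eqpotest} (same $A,B,a,b,c$) one has $h(\zeta_\ell^{x_1,y_p})=1+o(1)$ uniformly in $\ell\le\lfloor N/2\rfloor$, and summing the geometric series $\sum_{\ell=1}^{\lfloor N/2\rfloor}m_\star(y_p)^{\ell-1}=(1+o(1))/(1-m_\star(y_p))$ yields $J_{1,p}=(1+o(1))\,r(x_2,y_p)\big/\big(\mathfrak{R}(1-m_\star(y_p))\sum_{q=1}^2\frac{r(x_2,y_q)}{1-m_\star(y_q)}\big)$. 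Finally, summing over $p=1,2$ and cancelling $\sum_{p=1}^2\frac{r(x_2,y_p)}{1-m_\star(y_p)}$ against the common denominator gives $\sum_{p=1}^2 J_{1,p}=(1+o(1))/\mathfrak{R}$, which together with $J_{2,p}=o(1)$ is \eqref{lem6eq1spl}.

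The arithmetic above is routine; the real content lies upstream. The step I expect to be delicate — and the reason the scheme works — is that the flow $\psi$ was built in Subsection \ref{ss-testflowdefspl} so that its divergence collapses onto the thin one-dimensional tubes $\mathcal{A}_N^{\{x_i,y_p\}}$ and is supported there only in the range $\eta_{y_p}\le\lfloor N/2\rfloor$, which is \emph{precisely} the range on which Lemma \ref{eqpotest} pins $h$ down to $1$ near $x_1$ and to $0$ near $x_2$. The only genuine bookkeeping task is a careful pass through the boundary cases in the definition of $\psi$ (the $\eta_{x_i}\ge1$ and $\eta_{y_p}\le\lfloor N/2\rfloor-1$ constraints) to confirm this localization and to extract the exact value of $(\mathrm{div}\,\psi)$ displayed above.
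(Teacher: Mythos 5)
Your proposal is correct and follows essentially the same route as the paper's proof: it localizes the divergence of $\psi_{\mathrm{test}}$ to the one-dimensional tubes $\mathcal{A}_{N}^{\{x_{1},y_{p}\}}$ and $\mathcal{A}_{N}^{\{x_{2},y_{p}\}}$ exactly as asserted at the end of Subsection \ref{ss-testflowdefspl}, invokes Lemma \ref{eqpotest} to replace $h$ by $1+o(1)$ and $o(1)$ there, and extracts the same explicit value of $(\mathrm{div}\,\psi)(\zeta_{\ell}^{x_{1},y_{p}})$ before summing the geometric series, which is precisely the paper's computation. The additional bookkeeping you supply (the bound $Cm_{\star}(y_{p})^{j-1}$ on the $x_{2}$-side divergence, the checks at $\xi_{N}^{y_{p}}$ and of the range restrictions $\eta_{x_{i}}\ge1$, $\eta_{y_{p}}\le\lfloor N/2\rfloor-1$) only makes explicit what the paper leaves implicit.
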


\begin{proof}
We will abbreviate $h_{\mathcal{E}_{N}^{x_{1}},\,\mathcal{E}_{N}^{x_{2}}}$
as $h$. It follows from the last observation in Section \ref{ss-testflowdefspl}
that we only need to sum up the configurations in $\mathcal{A}_{N}^{x_{1}y_{p}}\setminus\mathcal{E}_{N}^{x_{1}}$,
$\mathcal{A}_{N}^{x_{2}y_{p}}\setminus\mathcal{E}_{N}^{x_{2}}$, and
$\mathcal{A}_{N}^{x_{1}x_{2}}\setminus(\mathcal{E}_{N}^{x_{1}}\cup\mathcal{E}_{N}^{x_{2}})$.
First, we claim that
\begin{equation}
\sum_{p=1}^{2}\Big[\sum_{\eta\in\mathcal{A}_{N}^{x_{1}y_{p}}\setminus\mathcal{E}_{N}^{x_{1}}}+\sum_{\eta\in\mathcal{A}_{N}^{x_{2}y_{p}}\setminus\mathcal{E}_{N}^{x_{2}}}\Big]h(\eta)(\mathrm{div}\,\psi_{0})(\eta)=\frac{1+o(1)}{\mathfrak{R}}.\label{lem6eq2spl}
\end{equation}
The left-hand side of \eqref{lem6eq2spl} is
\begin{equation}
\sum_{p=1}^{2}\sum_{\ell=1}^{\lfloor N/2\rfloor}h(\xi_{\ell}^{x_{1}y_{p}})(\mathrm{div}\,\psi_{0})(\xi_{\ell}^{x_{1}y_{p}})+\sum_{p=1}^{2}\sum_{\ell=1}^{\lfloor N/2\rfloor}h(\xi_{\ell}^{x_{2}y_{p}})(\mathrm{div}\,\psi_{0})(\xi_{\ell}^{x_{2}y_{p}}).\label{lem6eq3spl}
\end{equation}
By Lemma \ref{eqpotest}, we have
\begin{equation}
\sup_{1\le\ell\le\lfloor N/2\rfloor}\big|h(\xi_{\ell}^{x_{1}y_{p}})-1\big|=o(1),\label{lem6eq4spl}
\end{equation}
and 
\begin{equation}
\sup_{1\le\ell\le\lfloor N/2\rfloor}h(\xi_{\ell}^{x_{2}y_{p}})=o(1).\label{lem6eq5spl}
\end{equation}
Hence, \eqref{lem6eq3spl} is equal to 
\begin{equation}
(1+o(1))\sum_{p=1}^{2}\sum_{\ell=1}^{\lfloor N/2\rfloor}(\mathrm{div}\,\psi_{0})(\xi_{\ell}^{x_{1}y_{p}})+o(1)\sum_{p=1}^{2}\sum_{\ell=1}^{\lfloor N/2\rfloor}(\mathrm{div}\,\psi_{0})(\xi_{\ell}^{x_{2}y_{p}}).\label{lem6eq6spl}
\end{equation}
By \eqref{testflowspl}, the first term of \eqref{lem6eq6spl} is
asymptotically equivalent to
\[
\frac{1}{\mathfrak{R}}\sum_{p=1}^{2}\sum_{\ell=1}^{\lfloor N/2\rfloor}\frac{m_{\star}(y_{p})^{\ell-1}\big/\frac{N-1}{r(x_{2},\,y_{p})}}{\sum_{q=1}^{2}\frac{(1-m_{\star}(y_{q}))^{-1}}{\frac{N-1}{r(x_{2},\,y_{q})}}}=\frac{1}{\mathfrak{R}}\sum_{p=1}^{2}\sum_{\ell=1}^{\lfloor N/2\rfloor}\frac{m_{\star}(y_{p})^{\ell-1}r(x_{2},\,y_{p})}{\sum_{q=1}^{2}\frac{r(x_{2},\,y_{q})}{1-m_{\star}(y_{q})}}.
\]
Summing for $1\le\ell\le\frac{N}{2}$, the last formula equals $1/\mathfrak{R}$.
Similarly, the second part of \eqref{lem6eq6spl} equals $o(1)/\mathfrak{R}=o(1)$.
This concludes the proof of \eqref{lem6eq2spl}.

Next, from the definition, note that $(\mathrm{div}\,\phi_{p,\,k})(\eta)$
vanishes unless 
\[
\eta\in\mathcal{A}_{N}^{x_{1}y_{p}x_{2}}\setminus(\mathcal{A}_{N}^{x_{1}y_{p}}\cup\mathcal{A}_{N}^{x_{2}y_{p}}).
\]
Moreover, it is verified right after the definition of $\psi$ that
$\mathrm{div}\,\psi$ vanishes in
\[
\widehat{\mathcal{A}}_{N}^{x_{1}y_{p}x_{2}}\text{ for }1\le p\le2.
\]
Finally, it is straightforward that $(\mathrm{div}\,\psi_{0})(\eta)=0$
for $\eta\in\mathcal{A}_{N}^{x_{1}x_{2}}$. Combining these observations,
it remains to prove that
\[
\sum_{k=1}^{N-1}\sum_{p=1}^{2}\sum_{\eta\in\mathcal{A}_{N}^{x_{1}x_{2}}\setminus(\mathcal{E}_{N}^{x_{1}}\cup\mathcal{E}_{N}^{x_{2}})}h(\eta)(\mathrm{div}\,\phi_{p,\,k})(\eta)=o(1).
\]
This can be rewritten as
\[
\sum_{k=1}^{N-1}\sum_{p=1}^{2}h(\xi_{k}^{x_{1}x_{2}})(\mathrm{div}\,\phi_{p,\,k})(\xi_{k}^{x_{1}x_{2}})=o(1).
\]
Since $0\le h\le1$, it suffices to prove that
\[
\sum_{k=1}^{N-1}\Big|\sum_{p=1}^{2}(\mathrm{div}\,\phi_{p,\,k})(\xi_{k}^{x_{1}x_{2}})\Big|=o(1).
\]

For $\frac{N}{2}<k\le N-1$, by \eqref{e_phipk1} it holds that
\[
\sum_{p=1}^{2}(\mathrm{div}\,\phi_{p,\,k})(\xi_{k}^{x_{1}x_{2}})=\sum_{p=1}^{2}\sum_{t=1}^{\lfloor N/2\rfloor}(\mathrm{div}\,\psi_{0})(\xi_{k-t,\,t}^{x_{1}y_{p}x_{2}}).
\]
By \eqref{divpsi0}, for fixed $1\le p\le2$, the summation in $1\le t\le\lfloor N/2\rfloor$
is calculated as
\[
\begin{aligned} & \sum_{t=1}^{\lfloor N/2\rfloor}\frac{m_{\star}(y_{p})^{t-1}}{1-m_{\star}(y_{s})}\times\Big[\frac{1}{r(x_{1},\,y_{s})r(x_{2},\,y_{p})}-\frac{1}{r(x_{2},\,y_{s})r(x_{1},\,y_{p})}\Big]\times\mathfrak{A}(N,\,k)\\
 & =\frac{\mathfrak{A}(N,\,k)}{1-m_{\star}(y_{s})}\times\Big[\frac{1}{r(x_{1},\,y_{s})r(x_{2},\,y_{p})}-\frac{1}{r(x_{2},\,y_{s})r(x_{1},\,y_{p})}\Big]\times\Big[\frac{1}{1-m_{\star}(y_{p})}+O(m_{\star}(y_{p})^{\frac{N}{2}})\Big]\\
 & =\frac{\mathfrak{A}(N,\,k)}{(1-m_{\star}(y_{p}))(1-m_{\star}(y_{s}))}\Big[\frac{1}{r(x_{1},\,y_{s})r(x_{2},\,y_{p})}-\frac{1}{r(x_{2},\,y_{s})r(x_{1},\,y_{p})}\Big]+O\Big(\frac{m_{\star}(y_{p})^{\frac{N}{2}}}{N}\Big),
\end{aligned}
\]
where $\{p,\,s\}=\{1,\,2\}$, where in the first equality we used
\[
\sum_{t=1}^{\lfloor N/2\rfloor}m_{\star}(y_{p})^{t-1}=\frac{1}{1-m_{\star}(y_{p})}+\sum_{t>\lfloor N/2\rfloor}m_{\star}(y_{p})^{t-1}=\frac{1}{1-m_{\star}(y_{p})}+O(m_{\star}(y_{p})^{\frac{N}{2}}),
\]
and where in the second equality we used \eqref{ANkbound}. Summing
up for $p\in\{1,\,2\}$, the two terms involving the square bracket
cancel out with each other. Hence, we conclude that
\[
\sum_{k=\lfloor N/2\rfloor+1}^{N-1}\Big|\sum_{p=1}^{2}(\mathrm{div}\,\phi_{p,\,k})(\xi_{k}^{x_{1}x_{2}})\Big|\le\frac{N}{2}\times\sum_{p=1}^{2}O\Big(\frac{m_{\star}(y_{p})^{\frac{N}{2}}}{N}\Big)=O(m_{\star}(y_{p})^{\frac{N}{2}})=o(1).
\]
Therefore, it remains to prove that
\[
\sum_{k=1}^{\lfloor N/2\rfloor}\Big|\sum_{p=1}^{2}(\mathrm{div}\,\phi_{p,\,k})(\xi_{k}^{x_{1}x_{2}})\Big|=o(1).
\]
By a similar calculation, we obtain that the left-hand side is bounded
by
\[
\sum_{k=1}^{\lfloor N/2\rfloor}\sum_{p=1}^{2}O\Big(\frac{m_{\star}(y_{p})^{k-1}}{N}\Big)=O\Big(\frac{1}{N}\Big)=o(1).
\]
Thus, we conclude the proof.
\end{proof}

\subsection{Proof of Proposition \ref{LBspl}}

We are now ready to prove Proposition \ref{LBspl}. 
\begin{proof}[Proof of Proposition \ref{LBspl}]
 By Lemmas \ref{lem5spl} and \ref{lem6spl}, we have 
\[
\frac{1}{\|\psi_{\textup{test}}\|_{N}^{2}}\Big[\sum_{\eta\in\mathcal{H}_{N}\setminus\mathcal{E}_{N}^{\star}}h_{\mathcal{E}_{N}^{x_{1}},\,\mathcal{E}_{N}^{x_{2}}}(\eta)(\mathrm{div}\,\psi_{\textup{test}})(\eta)\Big]^{2}\ge(1+o(1))\frac{d_{N}^{2}}{2N\mathfrak{R}}.
\]
Therefore, we deduce from Theorem \ref{genThom} that
\[
\mathrm{Cap}_{N}(\mathcal{E}_{N}^{x_{1}},\,\mathcal{E}_{N}^{x_{2}})\ge(1+o(1))\frac{d_{N}^{2}}{2N\mathfrak{R}}.
\]
This concludes the proof of Proposition \ref{LBspl}.
\end{proof}

\section{\label{s-UB}Upper Bound for Capacities: General Case}

In this section, we omit Condition \ref{splcond} and extend the results
of Section \ref{s-UBspl} to the most general setting of Theorem \ref{main},
described in Subsection \ref{ss-sts}. Because proofs of the assertions
here are fundamentally similar to those in Section \ref{s-UBspl},
they will be written in a brief manner. 
\begin{prop}[Upper bound for capacities: General case]
\label{UB} Assume the conditions of Theorem \ref{main}. Then, for
any non-trivial partition $\{A,\,B\}$ of $\llbracket1,\,\kappa_{\star}\rrbracket$,
the following inequality holds. 
\[
\limsup_{N\to\infty}\frac{N}{d_{N}^{2}}\mathrm{Cap}_{N}(\mathcal{E}_{N}^{(2)}(A),\,\mathcal{E}_{N}^{(2)}(B))\le\frac{1}{|S_{\star}|}\sum_{i\in A}\sum_{j\in B}\frac{1}{\mathfrak{R}_{i,\,j}}.
\]
\end{prop}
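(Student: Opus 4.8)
The plan is to mimic the four-step construction of Section~\ref{s-UBspl} and bound $\mathrm{Cap}_{N}(\mathcal{E}_{N}^{(2)}(A),\mathcal{E}_{N}^{(2)}(B))$ from above by $D_{N}(f_{\mathrm{test}})$ via the Dirichlet principle (Theorem~\ref{Diri-Thom}(1)), now for an arbitrary non-trivial partition $\{A,B\}$ of $\llbracket1,\kappa_{\star}\rrbracket$. First I would fix the geometry: for $i\in A$, $j\in B$, $1\le n\le\mathfrak{n}(i)$, $1\le m\le\mathfrak{n}(j)$ and $1\le p\le\kappa_{0}$ with $r(x_{i,n},y_{p})\,r(x_{j,m},y_{p})>0$, the one-dimensional tube $\mathcal{A}_{N}^{\{x_{i,n},y_{p},x_{j,m}\}}$ is a \emph{bridge} between $\mathcal{E}_{N}(S_{i}^{(2)})$ and $\mathcal{E}_{N}(S_{j}^{(2)})$; set $\mathcal{U}_{N}$ to be the union of all such bridges and $\mathcal{V}_{N}=\mathcal{H}_{N}\setminus\mathcal{U}_{N}$. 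For each pair $(i,j)$ with $\mathfrak{R}_{i,j}<\infty$ I would introduce the discretized constant $\mathfrak{R}_{i,j}^{N}$, obtained from \eqref{Tijdef} by replacing $\int_{0}^{1}\cdots\,dt$ with $\sum_{t=1}^{N}$ and $\tfrac{1-t}{r},\tfrac{t}{r}$ with $\tfrac{N-t}{r},\tfrac{t-1}{r}$, so that $N^{-2}\mathfrak{R}_{i,j}^{N}\to\mathfrak{R}_{i,j}$, exactly as in Subsection~\ref{ss-prenotspl}.

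Second, I would define $f=f_{\mathrm{test}}$ by transcribing \eqref{testfcn1spl}--\eqref{testfcn4spl}: $f\equiv1$ on $\mathcal{E}_{N}(S_{i}^{(2)})$ for $i\in A$, $f\equiv0$ on $\mathcal{E}_{N}(S_{j}^{(2)})$ for $j\in B$ and on every $\mathcal{E}_{N}^{y_{p}}$; $f$ equal to the common endpoint value on each two-site tube $\mathcal{A}_{N}^{\{x_{i,n},y_{p}\}}$; on each bridge with $\mathfrak{R}_{i,j}<\infty$, the harmonic-type interpolation $f(\eta)=K^{\eta_{x_{i,n}},\eta_{y_{p}}}/\mathfrak{R}_{i,j}^{N}$, the analogue of \eqref{testfcn3spl} built from the summands in the denominator of \eqref{Tijdef} (and $f$ constant on any bridge with $\mathfrak{R}_{i,j}=\infty$); and on $\mathcal{V}_{N}$ a suitable $\{0,1\}$-valued cutoff separating the $A$-side from the $B$-side, the analogue of \eqref{testfcn4spl}. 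One then checks $0\le f\le1$ and \eqref{Diri-Thomeq}. Two features are genuinely new relative to the simple case and demand care: since the sites of a component $S_{i}^{(2)}$ are joined by positive rates, $f$ must be \emph{exactly} constant on $\mathcal{E}_{N}(S_{i}^{(2)})$ and on every tube involving only sites of one component (such moves carry rate of order $N^{2}$); and a single intermediate site $y_{p}$ may belong to several bridges, so one must verify that the prescriptions above agree on the tube overlaps, which are precisely valleys and two-site tubes on which $f$ is constant.

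Third, I would split $D_{N}(f)=\Sigma_{1}(f)+\Sigma_{2}(f)+\Sigma_{3}(f)+\Sigma_{4}(f)$ as in Subsection~\ref{ss-testfcndefspl}: $\Sigma_{1}$ the moves inside the bridges of $\mathcal{U}_{N}$, $\Sigma_{2}$ the single-particle hops between two intermediate sites, $\Sigma_{3}$ the $\mathcal{U}_{N}\leftrightarrow\mathcal{V}_{N}$ moves, $\Sigma_{4}$ the moves inside $\mathcal{V}_{N}$. For $\Sigma_{1}$ I would rerun Lemma~\ref{lem1spl} bridge by bridge: summing the occupation of $y_{p}$ as a geometric series of ratio $m_{\star}(y_{p})$ yields the factor $(1-m_{\star}(y_{p}))^{-1}$, recognizing the remaining sum as a Riemann integral yields $\int_{0}^{1}[(1-m_{\star}(y_{p}))(\tfrac{1-t}{r(x_{i,n},y_{p})}+\tfrac{t}{r(x_{j,m},y_{p})})]^{-1}\,dt$ divided by the square of the denominator in \eqref{Tijdef}, and summing over $n,m,p$ collapses that square against one copy of the denominator, leaving $\int_{0}^{1}(\text{denominator})^{-1}\,dt=\mathfrak{R}_{i,j}$; incorporating $Z_{N}^{-1}\simeq N/(|S_{\star}|d_{N})$ from \eqref{munprop} gives $\Sigma_{1}(f)\le\frac{d_{N}^{2}}{N}[\frac{1}{|S_{\star}|}\sum_{i\in A}\sum_{j\in B}\frac{1}{\mathfrak{R}_{i,j}}+O(1/N)]$, the pairs with $\mathfrak{R}_{i,j}=\infty$ contributing $0$. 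For $\Sigma_{2},\Sigma_{3},\Sigma_{4}$ I would reprise Lemmas~\ref{lem2spl}--\ref{lem4spl}, showing each is $o(d_{N}^{2}/N)$; $\Sigma_{4}$ is the most delicate and is handled by counting configurations with three or more non-empty sites through \cite[Lemma 9.1]{Kim-Seo}, together with the hypothesis \eqref{dNcond3}. Sending $N\to\infty$ finishes the proof.

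The hard part is \emph{not} the leading-order computation, which is a faithful copy of Section~\ref{s-UBspl}, but the bookkeeping around $\mathcal{V}_{N}$ in this richer geometry: one must choose $\mathcal{U}_{N}$ and the cutoff on $\mathcal{V}_{N}$ so that $f$ is constant along \emph{every} transition whose weighted rate $\mu_{N}(\eta)\mathbf{q}_{N}(\eta,\zeta)$ fails to be $o(d_{N}^{2}/N)$ --- notably all intra-component moves and the moves $x_{i,n}\leftrightarrow y_{p}$ issued from configurations near $\mathcal{A}_{N}^{\{x_{i,n},x_{j,m}\}}$ when $y_{p}$ does not bridge the pair $(i,j)$ --- while keeping the set of configurations on which $f$ does vary small enough (either via exponential $\mu_{N}$-decay, when a non-maximal site is heavily occupied, or via the entropy bound, when several sites are occupied) that $\Sigma_{3}$ and $\Sigma_{4}$ survive multiplication by $\mathbf{q}_{N}$ of order $Nd_{N}$ or $N$. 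Once that cutoff is designed correctly, the rest is routine transcription of the simple case.
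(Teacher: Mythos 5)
Your overall strategy coincides with the paper's (Dirichlet principle with a test function transplanted from Section \ref{s-UBspl}), and your treatment of $\Sigma_{1}$ and of the error terms would go through, but there is a genuine gap exactly at the point you yourself flag and then defer: the definition of $f$ outside the bridging tubes. You take $\mathcal{U}_{N}$ to be only the tubes $\mathcal{A}_{N}^{\{x_{i,n},y_{p},x_{j,m}\}}$ with $i\in A$, $j\in B$ and $r(x_{i,n},y_{p})r(x_{j,m},y_{p})>0$, and on $\mathcal{V}_{N}$ you prescribe a $\{0,1\}$-valued cutoff \`{a} la \eqref{testfcn4spl}. This fails whenever some pair $x_{i,n},x_{j,m}$ with $i\in A$, $j\in B$ has no common intermediate site (a case Theorem \ref{main} explicitly allows, since then one merely has $\mathfrak{R}_{i,j}=\infty$): the two-site tube $\mathcal{A}_{N}^{\{x_{i,n},x_{j,m}\}}$ then lies in your $\mathcal{V}_{N}$, its configurations $\zeta_{k}^{x_{i,n},x_{j,m}}$ carry measure $\mu_{N}\simeq Nd_{N}/(|S_{\star}|k(N-k))$ with no exponential or extra-$d_{N}$ suppression, and the diffusive exit move $x_{i,n}\to y_{p}$ (for any $y_{p}$ with $r(x_{i,n},y_{p})>0$) has rate $kd_{N}r$, so the edge weight at $k\approx N/2$ is of order $d_{N}^{2}$. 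A unit jump of the cutoff across such an edge (which necessarily occurs at the threshold, since $f=1$ at $\xi_{N}^{x_{i,n}}$ and $f=0$ at $\xi_{N}^{x_{j,m}}$ and the $A$-side count drops through $\lfloor N/2\rfloor$ there) contributes $\Theta(d_{N}^{2})$ to $D_{N}(f)$, i.e.\ $N$ times the target $d_{N}^{2}/N$, so the bound is destroyed. Your parenthetical ``$f$ constant on any bridge with $\mathfrak{R}_{i,j}=\infty$'' is vacuous (such bridges do not exist) and shows the unbridged $A$--$B$ connections are left untreated; saying the cutoff must be ``designed correctly'' names the problem without solving it, and no $\{0,1\}$-valued choice can solve it, since every exit edge from the interior of these two-site tubes has weight of order at least $d_{N}^{2}$.

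This is precisely the new content of the paper's Subsection \ref{ss-testfcndef}, which your transcription omits: on non-bridging three-site tubes the paper uses the interpolations \textbf{(U2)}--\textbf{(U4)} (e.g.\ \eqref{testfcn3-3}, where $f$ depends only on $k+\ell$, i.e.\ only on the occupation of the site not connected to $y_{p}$), so that $f$ is \emph{exactly constant along every positive-rate move inside the tube} while still interpolating between $1$ and $0$; and on $\mathcal{V}_{N}$ it uses the merging prescriptions \textbf{(V1)}--\textbf{(V2)} (identify two occupied sites of the same component with one) before resorting to the crude cutoff \eqref{testfcn4-3} on the residual set $\mathcal{V}_{N}^{3}$, whose dangerous configurations are then genuinely suppressed. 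One must also check these prescriptions agree on overlaps of tubes, which the paper does case by case. Without this layer of the construction, your $\Sigma_{3}$ and $\Sigma_{4}$ estimates cannot all be $o(d_{N}^{2}/N)$, so the proposal as written does not prove Proposition \ref{UB} in the stated generality (it does suffice when every $A$--$B$ pair of maximal sites is bridged, which is essentially the simple case).
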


\begin{rem}
In Proposition \ref{UB}, it is crucial to have $A\cup B=\llbracket1,\,\kappa_{\star}\rrbracket$;
if $A\cup B\subsetneq\llbracket1,\,\kappa_{\star}\rrbracket$, then
the equilibrium potential is significantly more complicated. Moreover,
we remark that if $\mathfrak{R}_{i,\,j}=\infty$ for all $i\in A$
and $j\in B$, then Proposition \ref{UB} asserts that
\[
\mathrm{Cap}_{N}(\mathcal{E}_{N}^{(2)}(A),\,\mathcal{E}_{N}^{(2)}(B))=o\Big(\frac{d_{N}^{2}}{N}\Big).
\]
This equality indicates that we do not observe metastable transitions
in the time scale $N/d_{N}^{2}$. Therefore, in this case, we expect
that yet another time scale is required to observe the metastable
transitions. This is conjectured to be $N^{2}/d_{N}^{3}$ in \cite{BDG}. 
\end{rem}

\subsection{Preliminary notions}

Once more, we define $m_{\star}=\max_{p=1}^{\kappa_{0}}m_{\star}(y_{p})<1$.
Because there are too many subscripts in the general case, we introduce
a convenient notation that helps us calculate the objects.

We define the following discretized version of the constant $\mathfrak{R}_{i,\,j}$
for $i,\,j\in\llbracket1,\,\kappa_{\star}\rrbracket$ given in \eqref{Tijdef}:

\[
\mathfrak{R}_{i,\,j}^{N}=\sum_{t=1}^{N}\frac{1}{\sum_{n=1}^{\mathfrak{n}(i)}\sum_{m=1}^{\mathfrak{n}(j)}\sum_{p=1}^{\kappa_{0}}\frac{(1-m_{\star}(y_{p}))^{-1}}{\frac{N-t}{r(x_{i,\,n},\,y_{p})}+\frac{t-1}{r(x_{j,\,m},\,y_{p})}}}.
\]
As in the special case in Subsection \ref{ss-prenotspl}, we write
$\mathfrak{R}_{i,\,j}^{N}=\infty$ if $r(x_{i,\,n},\,y_{p})r(x_{j,\,m},\,y_{p})=0$
for all $1\le n\le\mathfrak{n}(i)$, $1\le m\le\mathfrak{n}(j)$,
and $1\le p\le\kappa_{0}$. Clearly, we have $N^{-2}\mathfrak{R}_{i,\,j}^{N}\to\mathfrak{R}_{i,\,j}$
as $N$ tends to infinity. Moreover, define 
\[
I=\big\{(i,\,j)\in A\times B:\mathfrak{R}_{i,\,j}<\infty\big\},
\]
and for $(i,\,j)\in A\times B$,
\[
P_{i,\,n,\,j,\,m}=\{p:r(x_{i,\,n},\,y_{p})r(x_{j,\,m},\,y_{p})>0\}
\]
and 
\[
Q_{i,\,n,\,j,\,m}=\{p:r(x_{i,\,n},\,y_{p})+r(x_{j,\,m},\,y_{p})>0\}.
\]
For example, $(i,\,j)\in I$ if and only if $r(x_{i,\,n},\,y_{p})r(x_{j,\,m},\,y_{p})>0$
for some $n$, $m$, and $p$, which is also equivalent to 
\[
\bigcup_{n=1}^{\mathfrak{n}(i)}\bigcup_{m=1}^{\mathfrak{n}(j)}P_{i,\,n,\,j,\,m}\ne\emptyset.
\]
Moreover, we have $P_{i,\,n,\,j,\,m}\subseteq Q_{i,\,n,\,j,\,m}$.
Finally, we define
\begin{equation}
\mathcal{U}_{N}=\bigcup_{i\in A}\bigcup_{j\in B}\bigcup_{n=1}^{\mathfrak{n}(i)}\bigcup_{m=1}^{\mathfrak{n}(j)}\bigcup_{p=1}^{\kappa_{0}}\mathcal{A}_{N}^{x_{i,\,n}y_{p}x_{j,\,m}}\text{ and }\mathcal{V}_{N}=\mathcal{H}_{N}\setminus\mathcal{U}_{N}.\label{unvndef}
\end{equation}

\subsection{\label{ss-testfcndef}Construction of test function $f_{\textup{test}}^{A}$}

In this subsection, we define a test function $f=f_{\textup{test}}^{A}$
on $\mathcal{H}_{N}$, which approximates the equilibrium potential
$h_{\mathcal{E}_{N}^{(2)}(A),\,\mathcal{E}_{N}^{(2)}(B)}$. \emph{This
procedure is a natural extension of the definition in Subsection \ref{ss-testfcndefspl}.}
\begin{itemize}
\item First, we define $f$ on $\mathcal{E}_{N}(S)$:
\begin{equation}
f(\xi^{x_{i,\,n}})=1,\ i\in A\text{ and }f(\xi^{z})=0,\ z\in S\setminus\{x_{i,\,n}:i\in A,\ 1\le n\le\mathfrak{n}(i)\},\label{testfcn1}
\end{equation}
such that we have $f|_{\mathcal{E}_{N}^{(2)}(A)}=1$ and $f|_{\mathcal{E}_{N}^{(2)}(B)}=0$.
\item Second, we define $f$ on $\widehat{\mathcal{A}}_{N}^{x_{i,\,n}y_{p}}$
for $i\in\llbracket1,\,\kappa_{\star}\rrbracket$, $n\in\llbracket1,\,\mathfrak{n}(i)\rrbracket$,
and $p\in\llbracket1,\,\kappa_{0}\rrbracket$ by 
\begin{equation}
f(\xi_{k}^{x_{i,\,n}y_{p}})=f(\xi^{x_{i,\,n}}),\ 1\le k\le N-1.\label{testfcn2}
\end{equation}
\item Next, we define $f$ on the remainder of $\mathcal{U}_{N}$, i.e.,
on $\mathcal{A}_{N}^{x_{i,\,n}y_{p}x_{j,\,m}}\setminus(\mathcal{A}_{N}^{x_{i,\,n}y_{p}}\cup\mathcal{A}_{N}^{x_{j,\,m}y_{p}})$
for $(i,\,j)\in A\times B$, $n,\,m\ge1$, and $p\in\llbracket1,\,\kappa_{0}\rrbracket$.
This part is the main technical obstacle in the definition of $f_{\textup{test}}^{A}$.
There are four types, \textbf{(U1)} through \textbf{(U4)}.
\begin{itemize}
\item[\textbf{(U1)}]  If $(i,\,j)\in I$ and $n,\,m\ge1$ with $p\in Q_{i,\,n,\,j,\,m}$,
then for $\ell\in\llbracket1,\,N-2\rrbracket$ and $k\in\llbracket1,\,N-\ell-1\rrbracket$,
\begin{equation}
f(\xi_{k,\,\ell}^{x_{i,\,n}y_{p}x_{j,\,m}})=\frac{K_{i,\,n,\,p,\,j,\,m}^{k,\,\ell}}{\mathfrak{R}_{i,\,j}^{N}},\label{testfcn3-1}
\end{equation}
where
\[
\begin{aligned}K_{i,\,n,\,p,\,j,\,m}^{k,\,\ell}= & \sum_{t=1}^{k}\frac{\frac{N-t}{r(x_{i,\,n},\,y_{p})}\big/(\frac{N-t}{r(x_{i,\,n},\,y_{p})}+\frac{t-1}{r(x_{j,\,m},\,y_{p})})}{\sum_{\widetilde{n}=1}^{\mathfrak{n}(i)}\sum_{\widetilde{m}=1}^{\mathfrak{n}(j)}\sum_{q=1}^{\kappa_{0}}\frac{(1-m_{\star}(y_{q}))^{-1}}{\frac{N-t}{r(x_{i,\,\widetilde{n}},\,y_{q})}+\frac{t-1}{r(x_{j,\,\widetilde{m}},\,y_{q})}}}\\
 & +\sum_{t=1}^{k+\ell}\frac{\frac{t-1}{r(x_{j,\,m},\,y_{p})}\big/(\frac{N-t}{r(x_{i,\,n},\,y_{p})}+\frac{t-1}{r(x_{j,\,m},\,y_{p})})}{\sum_{\widetilde{n}=1}^{\mathfrak{n}(i)}\sum_{\widetilde{m}=1}^{\mathfrak{n}(j)}\sum_{q=1}^{\kappa_{0}}\frac{(1-m_{\star}(y_{q}))^{-1}}{\frac{N-t}{r(x_{i,\,\widetilde{n}},\,y_{q})}+\frac{t-1}{r(x_{j,\,\widetilde{m}},\,y_{q})}}}.
\end{aligned}
\]
By substituting $\ell=0$, one can verify that \eqref{testfcn3-1}
is well defined on $\widehat{\mathcal{A}}_{N}^{x_{i,\,n}x_{j,\,m}}$.
The fractions inside summations are well defined, as $(i,\,j)\in I$
implies that the common denominator is strictly positive. The numerators
of the fractions must be understood naturally if $r(x_{i,\,n},\,y_{p})r(x_{j,\,m},\,y_{p})=0$.
Indeed, if e.g., $r(x_{i,\,n},\,y_{p})>0$ and $r(x_{j,\,m},\,y_{p})=0$,
then the first one (``$0/\infty$'') is $0$, and the second one
(``$\infty/\infty$'') is $1$. 
\item[\textbf{(U2)}]  If $(i,\,j)\in I$ and $n,\,m\ge1$ with $p\notin Q_{i,\,n,\,j,\,m}$,
then for $\ell\in\llbracket1,\,N-2\rrbracket$ and $k\in\llbracket1,\,N-\ell-1\rrbracket$,
\begin{equation}
f(\xi_{k,\,\ell}^{x_{i,\,n}y_{p}x_{j,\,m}})=\frac{1}{\mathfrak{R}_{i,\,j}^{N}}\sum_{t=1}^{k}\frac{1}{\sum_{\widetilde{n}=1}^{\mathfrak{n}(i)}\sum_{\widetilde{m}=1}^{\mathfrak{n}(j)}\sum_{q=1}^{\kappa_{0}}\frac{(1-m_{\star}(y_{q}))^{-1}}{\frac{N-t}{r(x_{i,\,\widetilde{n}},\,y_{q})}+\frac{t-1}{r(x_{j,\,\widetilde{m}},\,y_{q})}}}.\label{testfcn3-2}
\end{equation}
Note that \eqref{testfcn3-2} is consistent with \eqref{testfcn3-1}
on $\widehat{\mathcal{A}}_{N}^{x_{i,\,n}x_{j,\,m}}$.
\item[\textbf{(U3)}]  If $(i,\,j)\notin I$ and $n,\,m\ge1$ with $p\in Q_{i,\,n,\,j,\,m}\setminus P_{i,\,n,\,j,\,m}$,
such that $r(x_{i,\,n},\,y_{p})>0$ and $r(x_{j,\,m},\,y_{p})=0$,
then for $\ell\in\llbracket0,\,N-2\rrbracket$ and $k\in\llbracket1,\,N-\ell-1\rrbracket$,
\begin{equation}
f(\xi_{k,\,\ell}^{x_{i,\,n}y_{p}x_{j,\,m}})=\frac{k+\ell}{N}.\label{testfcn3-3}
\end{equation}
As done previously, one can substitute $\ell=0$ to verify that \eqref{testfcn3-3}
is well defined on $\widehat{\mathcal{A}}_{N}^{x_{i,\,n}x_{j,\,m}}$.
Similarly, if $p\in Q_{i,\,n,\,j,\,m}\setminus P_{i,\,n,\,j,\,m}$
with $r(x_{i,\,n},\,y_{p})=0$ and $r(x_{j,\,m},\,y_{p})>0$, then
define
\begin{equation}
f(\xi_{k,\,\ell}^{x_{i,\,n}y_{p}x_{j,\,m}})=\frac{k}{N}.\label{testfcn3-4}
\end{equation}
\item[\textbf{(U4)}]  If $(i,\,j)\notin I$ and $n,\,m\ge1$ with $p\notin Q_{i,\,n,\,j,\,m}$,
then for $\ell\in\llbracket1,\,N-2\rrbracket$ and $k\in\llbracket1,\,N-\ell-1\rrbracket$,
\begin{equation}
f(\xi_{k,\,\ell}^{x_{i,\,n}y_{p}x_{j,\,m}})=\frac{k}{N}.\label{testfcn3-5}
\end{equation}
\eqref{testfcn3-5} is well defined on $\widehat{\mathcal{A}}_{N}^{x_{i,\,n}x_{j,\,m}}$
and consistent with \eqref{testfcn3-3}; substitute $\ell=0$.
\end{itemize}
\item Finally, we define $f$ on $\mathcal{V}_{N}$. Assume $\eta\in\mathcal{V}_{N}$.
There are three types, \textbf{(V1)}, \textbf{(V2)}, and \textbf{(V3)},
denoted by $\mathcal{V}_{N}^{1}$, $\mathcal{V}_{N}^{2}$, and $\mathcal{V}_{N}^{3}$,
respectively, such that
\begin{equation}
\mathcal{V}_{N}=\mathcal{V}_{N}^{1}\cup\mathcal{V}_{N}^{2}\cup\mathcal{V}_{N}^{3}.\label{vndiv}
\end{equation}
Define $\eta_{A}:=\sum_{i\in A}\sum_{n=1}^{\mathfrak{n}(i)}\eta_{x_{i,\,n}}$
and $\eta_{B}:=\sum_{j\in B}\sum_{m=1}^{\mathfrak{n}(j)}\eta_{x_{j,\,m}}$.
\begin{itemize}
\item[\textbf{(V1)}]  If $\eta_{A}=0$, then define 
\begin{equation}
f(\eta)=0.\label{testfcn4}
\end{equation}
\item[\textbf{(V2)}]  If $\eta_{A}\ge1$ and $\eta_{B}=0$, then define 
\begin{equation}
f(\eta)=1.\label{testfcn4-2}
\end{equation}
\item[\textbf{(V3)}]  If $\eta_{A},\,\eta_{B}\ge1$, we define
\begin{equation}
f(\eta)=\sum_{i\in A}\sum_{j\in B}\sum_{n=1}^{\mathfrak{n}(i)}\sum_{m=1}^{\mathfrak{n}(j)}\frac{\eta_{x_{i,\,n}}\eta_{x_{j,\,m}}}{\eta_{A}\eta_{B}}f(\xi_{\eta_{A}}^{x_{i,\,n}x_{j,\,m}}).\label{testfcn4-3}
\end{equation}
\end{itemize}
\item By construction, we have $0\le f(\eta)\le1$ for all $\eta\in\mathcal{H}_{N}$. 
\end{itemize}
We divide the Dirichlet form into four parts: 
\[
D_{N}(f)=\Sigma_{1}(f)+\Sigma_{2}(f)+\Sigma_{3}(f)+\Sigma_{4}(f).
\]

\begin{itemize}
\item The first part $\Sigma_{1}(f)$ consists of movements inside $\mathcal{A}_{N}^{x_{i,\,n}y_{p}x_{j,\,m}}$
for all $i,\,j\in\llbracket1,\,\kappa_{\star}\rrbracket$, $n\in\llbracket1,\,\mathfrak{n}(i)\rrbracket$,
$m\in\llbracket1,\,\mathfrak{n}(j)\rrbracket$, and $p\in\llbracket1,\,\kappa_{0}\rrbracket$. 
\item The second part $\Sigma_{2}(f)$ consists of movements between the
set differences of two distinct $\mathcal{A}_{N}^{x_{i,\,n}y_{p}x_{j,\,m}}$-type
sets.
\item The third part $\Sigma_{3}(f)$ consists of movements between $\mathcal{U}_{N}$
and $\mathcal{V}_{N}$. 
\item The last part $\Sigma_{4}(f)$ consists of movements inside $\mathcal{V}_{N}$.
\end{itemize}

\subsection{Main contribution of Dirichlet form}

In this subsection, we calculate $\Sigma_{1}(f)$, which is the main
ingredient of $D_{N}(f)$. 
\begin{lem}
\label{lem1}Under the conditions of Theorem \ref{main}, it holds
that 
\[
\Sigma_{1}(f)\le\frac{d_{N}^{2}}{|S_{\star}|N}\Big[\sum_{i\in A}\sum_{j\in B}\frac{1}{\mathfrak{R}_{i,\,j}}+o(1)\Big].
\]
\end{lem}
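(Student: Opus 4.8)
The plan is to mimic the proof of Lemma \ref{lem1spl}, keeping careful track of the extra indices introduced by the construction \eqref{testfcn3-1}--\eqref{testfcn3-4}. First I would write $\Sigma_1(f)$ as a sum, over the three-site tubes $\mathcal{A}_N^{\{x_{i,n},y_p,x_{j,m}\}}$, of the Dirichlet contribution coming from jumps internal to that tube. Since, by \eqref{testfcn3-1}--\eqref{testfcn3-4} and \eqref{testfcn4-1}, the restriction of $f$ to any such tube is a convex combination of the endpoint values $f(\xi_N^{x_{i,n}})$ and $f(\xi_N^{x_{j,m}})$, the tube contributes $0$ unless exactly one of $i,j$ lies in $A$; fix such a pair and assume $i\in A$, $j\in B$, so $f(\xi_N^{x_{i,n}})=1$ and $f(\xi_N^{x_{j,m}})=0$. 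Next I would observe that if $(i,j)\notin I$ then on every such tube at least one of $r(x_{i,n},y_p)$, $r(x_{j,m},y_p)$ vanishes, so $f$ is of type \textbf{(U3)} or \textbf{(U4)} and is constant along the admissible jumps (which either preserve $\eta_{x_{i,n}}+\eta_{y_p}$ or are forbidden), giving contribution exactly $0$; the same computation shows that, even when $(i,j)\in I$, a tube with $p\in Q_{i,n,j,m}\setminus P_{i,n,j,m}$ contributes only boundary terms, since the telescoped increment of $K_{i,n,p,j,m}^{k,\ell}$ vanishes away from $\{k=1\}\cup\{k=N-\ell-1\}$. Hence $\Sigma_1(f)$ reduces to a sum over $(i,j)\in I$ with $i\in A$, $j\in B$, over $n\le\mathfrak{n}(i)$, $m\le\mathfrak{n}(j)$, and over $p\in P_{i,n,j,m}$, plus a remainder to be shown negligible.

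For each surviving tube I would proceed exactly as in Lemma \ref{lem1spl}: write the contribution as a double sum over $(k,\ell)=(\eta_{x_{i,n}},\eta_{y_p})$, use \eqref{muneq1} and \eqref{munprop} to extract the prefactor $\tfrac{N\,m_\star(y_p)^{\ell}}{|S_\star|\,d_N}$ up to $\simeq$ (this is where $NZ_N/d_N\to|S_\star|$ enters, in place of the factor $2$ of the simple case), and split $\{0\le\ell\le N-1\}$ into $\{\ell>\lfloor N/2\rfloor\}$, which is $O(d_N^2 m_\star^{N/2})$ by summing the geometric series in $m_\star(y_p)$, and $\{\ell\le\lfloor N/2\rfloor\}$. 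In the main range I would discard, via \eqref{testfcn2} and \eqref{testfcn4-1}, the jumps internal to the two-site sub-tubes $\mathcal{A}_N^{\{x_{i,n},y_p\}}$ and $\mathcal{A}_N^{\{x_{j,m},y_p\}}$, where $f$ is locally constant, and treat the boundary jumps at $\eta_{x_{i,n}}=1$ or $\eta_{x_{j,m}}=1$ separately: the telescoping bounds on $K_{i,n,p,j,m}^{1,\ell}$ and $\mathfrak{R}_{i,j}^N-K_{i,n,p,j,m}^{N-\ell-1,\ell}$ give $f$-differences of size $(\ell+1)\,O(1/N)$, so these jumps sum to $O(d_N/N^3)$, as in \eqref{lem1eq6spl}--\eqref{lem1eq7spl}.

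For the bulk jumps $\eta\leftrightarrow\sigma^{x_{i,n},y_p}\eta$ and $\eta\leftrightarrow\sigma^{x_{j,m},y_p}\eta$, the key point is that $f(\sigma^{x_{i,n},y_p}\eta)-f(\eta)=\tfrac1{\mathfrak{R}_{i,j}^N}\big(K_{i,n,p,j,m}^{k-1,\ell+1}-K_{i,n,p,j,m}^{k,\ell}\big)$ telescopes to a single fraction with denominator $\sum_{\widetilde n,\widetilde m,q}\tfrac1{(1-m_\star(y_q))(\frac{N-k}{r(x_{i,\widetilde n},y_q)}+\frac{k-1}{r(x_{j,\widetilde m},y_q)})}$. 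Inserting this, using $\tfrac1{N-k-\ell}=\tfrac1{N-k}+\tfrac\ell{(N-k-\ell)(N-k)}$ and $\tfrac{N-k}{N-k-\ell}\le\ell+1$ to peel off the errors, replacing the Riemann sums in $k$ by $N^2\int_0^1(\cdots)\,dt$, and summing $\sum_{\ell\le\lfloor N/2\rfloor}m_\star(y_p)^{\ell}\simeq(1-m_\star(y_p))^{-1}$, the $x_{i,n}$-jumps and $x_{j,m}$-jumps produce the numerators $\tfrac{1-t}{r(x_{i,n},y_p)}$ and $\tfrac{t}{r(x_{j,m},y_p)}$ respectively and combine, exactly as in \eqref{lem1eq4spl}--\eqref{lem1eq8spl}, to show the tube's contribution is asymptotically
\begin{equation*}
\frac{d_N^2}{|S_\star|\,N\,(\mathfrak{R}_{i,j})^2}\int_0^1\frac{\big(1-m_\star(y_p)\big)^{-1}\big(\tfrac{1-t}{r(x_{i,n},y_p)}+\tfrac{t}{r(x_{j,m},y_p)}\big)^{-1}}{\Big\{\sum_{\widetilde n=1}^{\mathfrak{n}(i)}\sum_{\widetilde m=1}^{\mathfrak{n}(j)}\sum_{q=1}^{\kappa_0}\tfrac1{(1-m_\star(y_q))\big(\frac{1-t}{r(x_{i,\widetilde n},y_q)}+\frac{t}{r(x_{j,\widetilde m},y_q)}\big)}\Big\}^{2}}\,dt+O\!\Big(\frac{d_N^2}{N^2}\Big).
\end{equation*}

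Summing this over $n,m,p$, the numerator's triple sum equals, pointwise in $t$, the bracketed denominator, so the $(i,j)$-block collapses to $\tfrac{d_N^2}{|S_\star|N(\mathfrak{R}_{i,j})^2}\int_0^1\big(\text{that denominator}\big)^{-1}dt=\tfrac{d_N^2}{|S_\star|N\,\mathfrak{R}_{i,j}}$ by \eqref{Tijdef}; summing over $(i,j)\in I$ with $i\in A$, $j\in B$ — the pairs with $\mathfrak{R}_{i,j}=\infty$ contributing $1/\mathfrak{R}_{i,j}=0$ — and invoking the standing assumptions on $d_N$ to collect the error terms yields $\Sigma_1(f)\le\tfrac{d_N^2}{|S_\star|N}\big[\sum_{i\in A}\sum_{j\in B}1/\mathfrak{R}_{i,j}+o(1)\big]$. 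The main obstacle is the bookkeeping: disentangling the case split \textbf{(U1)}--\textbf{(U4)} together with the sub-cases $p\in P_{i,n,j,m}$ versus $p\in Q_{i,n,j,m}\setminus P_{i,n,j,m}$ (where the ``$0/\infty$'' and ``$\infty/\infty$'' conventions in $K_{i,n,p,j,m}^{k,\ell}$ must be honoured), and checking that every lower-order correction is genuinely $o(d_N^2/N)$; by contrast the conceptual core — the telescoping of $K_{i,n,p,j,m}^{k,\ell}$ and the collapse of the $n,m,p$-sum against \eqref{Tijdef} — is a transparent extension of the simple case.
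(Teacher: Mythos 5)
Your proposal follows essentially the same route as the paper: reduce $\Sigma_{1}(f)$ to the tubes with $(i,j)\in I\cap(A\times B)$ and $p\in P_{i,n,j,m}$, then rerun the simple-case computation (telescoping of $K_{i,n,p,j,m}^{k,\ell}$, Riemann sums in $k$, geometric series in $\ell$, boundary jumps of size $(\ell+1)O(1/N)$), with the $1/|S_{\star}|$ prefactor from $NZ_{N}/d_{N}\rightarrow|S_{\star}|$ and the final collapse of the $n,m,p$-sum against \eqref{Tijdef}. The only quibble is your claim that same-side tubes ($i,j\in A$ or $i,j\in B$) and $(i,j)\notin I$ tubes contribute \emph{exactly} zero: since $f(\xi_{N}^{y_{p}})=0$ regardless of $i,j$ and since there are boundary jumps at $\eta_{x_{i,n}}=1$ onto the two-site tube where \eqref{testfcn2} takes over, these tubes contribute nonzero but negligible amounts (the paper records $O(d_{N}Nm_{\star}^{N})$ for the former), which your error bookkeeping would absorb in any case.
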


\begin{proof}
We write down all movements inside $\mathcal{A}_{N}^{x_{i,\,n}y_{p}x_{j,\,m}}$
and sum it up for all $i,\,j,\,n,\,m,\,p$. Namely,
\[
\begin{aligned} & \Sigma_{1}(f)\le\sum_{i\in A}\sum_{j\in B}\sum_{n,\,m,\,p}\sum_{\eta\in\mathcal{A}_{N}^{x_{i,\,n}y_{p}x_{j,\,m}}}\mu_{N}(\eta)\times\\
 & \big[\mathbf{q}_{N}(\eta,\,\sigma^{x_{i,\,n},\,y_{p}}\eta)\{f(\sigma^{x_{i,\,n},\,y_{p}}\eta)-f(\eta)\}^{2}+\mathbf{q}_{N}(\eta,\,\sigma^{x_{j,\,m},\,y_{p}}\eta)\{f(\sigma^{x_{j,\,m},\,y_{p}}\eta)-f(\eta)\}^{2}\big]
\end{aligned}
\]
is the desired equation. The only overlapping terms on the right-hand
side above are movements along $\mathcal{A}_{N}^{x_{i,\,n}y_{p}}$
for $i\in\llbracket1,\,\kappa_{\star}\rrbracket$, $n\in\llbracket1,\,\mathfrak{n}(i)\rrbracket$,
and $p\in\llbracket1,\,\kappa_{0}\rrbracket$. In fact, by \eqref{testfcn2},
these terms have an exponentially small effect on the entire summation.
Thus, the inequality used above is actually sharp. By \eqref{muneq1}
and \eqref{munprop}, the right-hand side is asymptotically equal
to
\[
\begin{aligned}\frac{d_{N}N}{|S_{\star}|} & \sum_{i\in A}\sum_{j\in B}\sum_{n,\,m}\sum_{p\in Q_{i,\,n,\,j,\,m}}\sum_{\ell=0}^{N-1}m_{\star}(y_{p})^{\ell}\times\\
 & \Big[\sum_{k=1}^{N-\ell}w_{N}(N-\ell-k)r(x_{i,\,n},\,y_{p})\{f(\xi_{k-1,\,\ell+1}^{x_{i,\,n}y_{p}x_{j,\,m}})-f(\xi_{k,\,\ell}^{x_{i,\,n}y_{p}x_{j,\,m}})\}^{2}\\
 & \ +\sum_{k=0}^{N-\ell-1}w_{N}(k)r(x_{j,\,m},\,y_{p})\{f(\xi_{k,\,\ell+1}^{x_{i,\,n}y_{p}x_{j,\,m}})-f(\xi_{k,\,\ell}^{x_{i,\,n}y_{p}x_{j,\,m}})\}^{2}\Big].
\end{aligned}
\]
We only need to consider $p\in Q_{i,\,n,\,j,\,m}$, as otherwise $r(x_{i,\,n},\,y_{p})=r(x_{j,\,m},\,y_{p})=0$.
Next, if $p\in Q_{i,\,n,\,j,\,m}\setminus P_{i,\,n,\,j,\,m}$, then
the terms inside the bracket vanish due to \eqref{testfcn3-1}, \eqref{testfcn3-2},
and \eqref{testfcn3-3}. Gathering the preceding observations, the
last formula is asymptotically equal to
\[
\begin{aligned}\frac{d_{N}N}{|S_{\star}|} & \sum_{(i,\,j)\in I\cap(A\times B)}\sum_{n,\,m}\sum_{p\in P_{i,\,n,\,j,\,m}}\sum_{\ell=0}^{N-1}m_{\star}(y_{p})^{\ell}\times\\
 & \Big[\sum_{k=1}^{N-\ell}w_{N}(N-\ell-k)r(x_{i,\,n},\,y_{p})\{f(\xi_{k-1,\,\ell+1}^{x_{i,\,n}y_{p}x_{j,\,m}})-f(\xi_{k,\,\ell}^{x_{i,\,n}y_{p}x_{j,\,m}})\}^{2}\\
 & \;+\sum_{k=0}^{N-\ell-1}w_{N}(k)r(x_{j,\,m},\,y_{p})\{f(\xi_{k,\,\ell+1}^{x_{i,\,n}y_{p}x_{j,\,m}})-f(\xi_{k,\,\ell}^{x_{i,\,n}y_{p}x_{j,\,m}})\}^{2}\Big].
\end{aligned}
\]
The rest of the proof is almost identical to that of Lemma \ref{lem1spl};
we obtain 
\[
\Sigma_{1}(f)\le\frac{d_{N}^{2}}{|S_{\star}|N}\Big[\sum_{(i,\,j)\in I\cap(A\times B)}\frac{1}{\mathfrak{R}_{i,\,j}}+o(1)\Big].
\]
Because $\mathfrak{R}_{i,\,j}=\infty$ if $(i,\,j)\notin I$, the
last formula is exactly what we expect.
\end{proof}

\subsection{\label{ss-remDiri}Remainder of Dirichlet form}

Here, we deal with the remaining terms in the Dirichlet form, $\Sigma_{2}(f)$,
$\Sigma_{3}(f)$, and $\Sigma_{4}(f)$. Lemma \ref{lem2} deals with
$\Sigma_{2}(f)$. 
\begin{lem}
\label{lem2}Under the conditions of Theorem \ref{main}, it holds
that 
\[
\Sigma_{2}(f)=O\Big(\frac{d_{N}^{3}\log N}{N^{2}}\Big)=o\Big(\frac{d_{N}^{2}}{N}\Big).
\]
\end{lem}

\begin{proof}
Recalling that $\Sigma_{2}(f)$ consists of dynamics between the set
differences of two distinct $\mathcal{A}_{N}^{x_{i,\,n}y_{p}x_{j,\,m}}$-type
sets, there are two such types of movements.\medskip{}

\noindent \textbf{(Case 1)} The first case is represented when a sole
particle moves between $x_{i,\,n}$ and $x_{i,\,\widetilde{n}}$.
More specifically, this is written as
\begin{equation}
\begin{aligned}\sum_{i\in A}\sum_{j\in B}\sum_{n,\,m,\,p} & \sum_{\ell=0}^{N-1}\mu_{N}(\xi_{1,\,\ell}^{x_{i,\,n}y_{p}x_{j,\,m}})\times\\
 & \Big[\sum_{\widetilde{n}}d_{N}r(x_{i,\,n},\,x_{i,\,\widetilde{n}})\{f(\xi_{1,\,\ell}^{x_{i,\,\widetilde{n}}y_{p}x_{j,\,m}})-f(\xi_{1,\,\ell}^{x_{i,\,n}y_{p}x_{j,\,m}})\}^{2}\\
 & \ +\sum_{\widetilde{m}}d_{N}r(x_{j,\,m},\,x_{j,\,\widetilde{m}})\{f(\xi_{1,\,\ell}^{x_{i,\,n}y_{p}x_{j,\,\widetilde{m}}})-f(\xi_{1,\,\ell}^{x_{i,\,n}y_{p}x_{j,\,m}})\}^{2}\Big].
\end{aligned}
\label{lem2eq1}
\end{equation}
If $\ell=0$, then this vanishes by \eqref{testfcn3-1} and \eqref{testfcn3-3}.
If $\ell=N-1$, then this vanishes by \eqref{testfcn1} and \eqref{testfcn2}.
If $\ell\in\llbracket1,\,N-2\rrbracket$, then 
\[
f(\xi_{1,\,\ell}^{x_{i,\,\widetilde{n}}y_{p}x_{j,\,m}})-f(\xi_{1,\,\ell}^{x_{i,\,n}y_{p}x_{j,\,m}})=O\Big(\frac{\ell}{N}\Big),
\]
by \eqref{testfcn3-1}, \eqref{testfcn3-2}, \eqref{testfcn3-3},
\eqref{testfcn3-4}, and \eqref{testfcn3-5}. Therefore, \eqref{lem2eq1}
is bounded from above by 
\begin{equation}
C\sum_{\ell=1}^{N-2}\frac{Nd_{N}^{3}m_{\star}^{\ell}}{\ell(N-\ell-1)}\frac{\ell^{2}}{N^{2}}=O\Big(\frac{d_{N}^{3}}{N^{2}}\Big).\label{lem2eq2}
\end{equation}
\medskip{}

\noindent \textbf{(Case 2)} The second case is represented when a
sole particle moves between $y_{p}$ and $y_{q}$. This case is identical
to Lemma \ref{lem2spl}, which is bounded by 
\begin{equation}
C\sum_{k=1}^{N-2}\frac{Nd_{N}^{3}m_{\star}}{k(N-k-1)}\frac{1}{N^{2}}=O\Big(\frac{d_{N}^{3}\log N}{N^{2}}\Big).\label{lem2eq3}
\end{equation}
Gathering the cases, we have by \eqref{lem2eq2} and \eqref{lem2eq3}
that $\Sigma_{2}(f)=O(d_{N}^{3}N^{-2}\log N)$. This concludes the
proof. 
\end{proof}
Next, we consider $\Sigma_{3}(f)$. 
\begin{lem}
\label{lem3}Under the conditions of Theorem \ref{main}, it holds
that
\[
\Sigma_{3}(f)=O\Big(d_{N}^{2}m_{\star}^{N}+\frac{d_{N}^{3}}{N}\log N\Big)=o\Big(\frac{d_{N}^{2}}{N}\Big).
\]
\end{lem}

\begin{proof}
We formulate
\[
\Sigma_{3}(f)=\sum_{\eta\in\mathcal{U}_{N}}\sum_{\zeta\in\mathcal{V}_{N}}\mu_{N}(\eta)\mathbf{q}_{N}(\eta,\,\zeta)\{f(\zeta)-f(\eta)\}^{2}.
\]
We divide this into several cases depending on which subset $\eta$
belongs to.\medskip{}

\noindent \textbf{(Case 1)} $\eta\in\mathcal{A}_{N}^{x_{i,\,n}x_{j,\,m}}$
for some $(i,\,j)\in A\times B$ and $n,\,m\ge1$: In this case, the
movement must occur between sites in $S_{i}^{(2)}$ or between sites
in $S_{j}^{(2)}$. Otherwise, $\zeta\notin\mathcal{V}_{N}$. Hence,
$f$ remains unchanged by \eqref{testfcn3-1}, \eqref{testfcn3-3},
and \eqref{testfcn4-3}.\medskip{}

\noindent \textbf{(Case 2)} $\eta\in\mathcal{A}_{N}^{x_{i,\,n}y_{p}}\setminus\mathcal{E}_{N}^{x_{i,\,n}}$
for some $i\in\llbracket1,\,\kappa_{\star}\rrbracket$ and $n,\,p\ge1$:
We divide again by types of the particle movement.\medskip{}
\begin{itemize}
\item \textbf{(Case 2-1)} Movement from $y_{p}$ into $S_{\star}\setminus\{x_{i,\,n}\}$:
We have $\eta_{y_{p}}\le N-1$, since otherwise $\zeta\notin\mathcal{V}_{N}$.
Hence, $f$ remains unchanged by \eqref{testfcn2}, \eqref{testfcn4},
and \eqref{testfcn4-2}.\medskip{}
\item \textbf{(Case 2-2)} Movement from $y_{p}$ into $S\setminus(S_{\star}\cup\{y_{p}\})$:
This is identical to \textbf{(Case 2-1)} of Lemma \ref{lem3spl},
so that the summation is $0$.\medskip{}
\item \textbf{(Case 2-3)} Movement from $x_{i,\,n}$ into $S_{\star}\setminus\{x_{i,\,n}\}$:
$f$ remains unchanged by \eqref{testfcn2}, \eqref{testfcn4}, and
\eqref{testfcn4-2}.\medskip{}
\item \textbf{(Case 2-4)} Movement from $x_{i,\,n}$ into $S\setminus(S_{\star}\cup\{y_{p}\})$:
This is same with \textbf{(Case 2-2)} of Lemma \ref{lem3spl}. We
obtain the upper bound $O(d_{N}^{2}m_{\star}^{N})$. In conclusion,
\textbf{(Case 2)} yields $O(d_{N}^{2}m_{\star}^{N})$.\medskip{}
\end{itemize}
\textbf{(Case 3)} $\eta\in\widehat{\mathcal{A}}_{N}^{x_{i,\,n}y_{p}x_{j,\,m}}$
for some $(i,\,j)\in A\times B$ and $n,\,m,\,p\ge1$: This case is
almost identical to \textbf{(Case 3)} of Lemma \ref{lem3spl}. The
only different thing is that the particle movement might occur as
$x_{i,\,n}\leftrightarrow x_{i,\,\widetilde{n}}$ or $x_{j,\,m}\leftrightarrow x_{j,\,\widetilde{m}}$.
In this other case, we may bound the summation as
\[
C\sum_{k=1}^{N-2}\sum_{\ell=1}^{N-k-1}\frac{Nd_{N}^{3}m_{\star}^{\ell}}{\ell(N-k-\ell)}\cdot\frac{(\ell+1)^{2}}{N^{2}}=O\Big(\frac{d_{N}^{3}}{N}\log N\Big).
\]
Thus, we can bound \textbf{(Case 3)} by $O(\frac{d_{N}^{3}}{N}\log N)$.
Summarizing all cases, we conclude that $\Sigma_{3}(f)=O(d_{N}^{2}m_{\star}^{N}+\frac{d_{N}^{3}}{N}\log N)$.
\end{proof}
Our final aim of this subsection is $\Sigma_{4}(f)$.
\begin{lem}
\label{lem4}Under the conditions of Theorem \ref{main}, it holds
that
\[
\Sigma_{4}(f)=O(d_{N}^{2}m_{\star}^{N})+O\Big(\frac{d_{N}^{3}}{N}\log N\Big)=o\Big(\frac{d_{N}^{2}}{N}\Big).
\]
\end{lem}

\begin{proof}
By definition, we have
\[
\Sigma_{4}(f)=\frac{1}{2}\sum_{\eta,\,\zeta\in\mathcal{V}_{N}}\mu_{N}(\eta)\mathbf{q}_{N}(\eta,\,\zeta)\{f(\zeta)-f(\eta)\}^{2}.
\]
Recalling \eqref{vndiv}, we divide the summation in $\eta,\zeta\in\mathcal{V}_{N}$
by where $\eta$ and $\zeta$ belong.\medskip{}

\noindent \textbf{(Case 1)} $\mathcal{V}_{N}^{1}\leftrightarrow\mathcal{V}_{N}^{1}$
or $\mathcal{V}_{N}^{2}\leftrightarrow\mathcal{V}_{N}^{2}$: $f$
remains unchanged by \eqref{testfcn4} and \eqref{testfcn4-2}.\medskip{}

\noindent \textbf{(Case 2)} $\mathcal{V}_{N}^{1}\leftrightarrow\mathcal{V}_{N}^{2}$:
Similarly to \textbf{(Case 2)} of Lemma \ref{lem4spl}, the summation
is exponentially small scaling as $O(d_{N}^{2}m_{\star}^{N})$.\medskip{}

\noindent \textbf{(Case 3)} $\mathcal{V}_{N}^{1}\leftrightarrow\mathcal{V}_{N}^{3}$:
This is impossible.\medskip{}

\noindent \textbf{(Case 4)} $\mathcal{V}_{N}^{2}\leftrightarrow\mathcal{V}_{N}^{3}$:
We can bound this case by $O(\frac{d_{N}^{3}}{N^{2}})$ in a similar
way as in \textbf{(Case 4)} of Lemma \ref{lem4spl}.\medskip{}

\noindent \textbf{(Case 5)} $\mathcal{V}_{N}^{3}\leftrightarrow\mathcal{V}_{N}^{3}$:
In the same reasoning with (Case 5) of Lemma \ref{lem4spl}, with
some additional care, we may bound this case by
\[
C\frac{d_{N}^{2}}{N}\cdot\sum_{\ell=1}^{\infty}(d_{N}\log N)^{\ell}=C\frac{d_{N}^{3}}{N}\log N\cdot\frac{1}{1-d_{N}\log N}=O\Big(\frac{d_{N}^{3}}{N}\log N\Big).
\]
\end{proof}

\subsection{Proof of Proposition \ref{UB}}

Now, we are in position to prove Proposition \ref{UB}. 
\begin{proof}[Proof of Proposition \ref{UB}]
 By Lemmas \ref{lem1}, \ref{lem2}, \ref{lem3}, and \ref{lem4},
\[
D_{N}(f_{\textup{test}}^{A})\le\frac{d_{N}^{2}}{|S_{\star}|N}\sum_{i\in A}\sum_{j\in B}\frac{1}{\mathfrak{R}_{i,\,j}}+O\Big(\frac{d_{N}^{2}}{N^{2}}\Big)+O(d_{N}^{2}m_{\star}^{N})+O\Big(\frac{d_{N}^{3}}{N}\log N\Big).
\]
Sending $N\to\infty$, as $\lim_{N\to\infty}d_{N}\log N=0$ and $d_{N}$
decays subexponentially, we have 
\[
\limsup_{N\to\infty}\frac{N}{d_{N}^{2}}D_{N}(f_{\textup{test}}^{A})\le\frac{1}{|S_{\star}|}\sum_{i\in A}\sum_{j\in B}\frac{1}{\mathfrak{R}_{i,\,j}}.
\]
Therefore, by Theorem \ref{Diri-Thom}, we obtain the desired result. 
\end{proof}

\section{\label{s-LB}Lower Bound for Capacities: General Case}

In this section, we establish the lower bound for the capacities in
the most general setting given in Subsection \ref{ss-sts}. The following
proposition explains the result. The proofs in this section will be
stated concisely. 
\begin{prop}[Lower bound for capacities: General case]
\label{LB} Assume the conditions of Theorem \ref{main}. Suppose
that $\{A,\,B\}$ is a non-trivial partition $\llbracket1,\,\kappa_{\star}\rrbracket$.
Then, the following inequality holds. 
\begin{equation}
\liminf_{N\to\infty}\frac{N}{d_{N}^{2}}\mathrm{Cap}_{N}(\mathcal{E}_{N}^{(2)}(A),\,\mathcal{E}_{N}^{(2)}(B))\ge\frac{1}{|S_{\star}|}\sum_{i\in A}\sum_{j\in B}\frac{1}{\mathfrak{R}_{i,\,j}}.\label{LBeq}
\end{equation}
\end{prop}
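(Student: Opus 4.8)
The plan is to mirror the structure used in the simple case (Section \ref{s-LBspl}): construct an explicit test flow $\psi_{\mathrm{test}}^{A}$ on $\mathcal{H}_{N}$, supported on the one-dimensional tubes $\mathcal{A}_{N}^{\{x_{i,n},y_{p},x_{j,m}\}}$ with $(i,j)\in I\cap(A\times B)$ and $p\in P_{i,n,j,m}$, and apply the generalized Thomson principle (Theorem \ref{genThom}) with $\mathcal{A}=\mathcal{E}_{N}^{(2)}(A)$ and $\mathcal{B}=\mathcal{E}_{N}^{(2)}(B)$. The flow on each such tube should be taken as the natural generalization of \eqref{testflowspl}--\eqref{testflow2spl}, namely $\psi_{\mathrm{test}}^{A}(\eta,\sigma^{x_{i,n},y_{p}}\eta)$ proportional to $m_{\star}(y_{p})^{\eta_{y_{p}}}$ divided by the appropriate effective-resistance-type factor, normalized by $\mathfrak{R}_{i,j}$, with the opposite sign for the $\sigma^{x_{j,m},y_{p}}\eta$ move and $0$ elsewhere. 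By the same cancellation that was used in Subsection \ref{ss-testflowdefspl}, this choice forces $(\mathrm{div}\,\psi_{\mathrm{test}}^{A})(\eta)=0$ for every $\eta$ in the interior of each tube (i.e., on $\mathcal{A}_{N}^{\{x_{i,n},y_{p},x_{j,m}\}}\setminus(\mathcal{A}_{N}^{\{x_{i,n},y_{p}\}}\cup\mathcal{A}_{N}^{\{x_{j,m},y_{p}\}})$) and at each condensed state $\xi_{N}^{x_{i,n}}$, so the only surviving divergences are on $\mathcal{A}_{N}^{\{x_{i,n},y_{p}\}}\setminus\mathcal{E}_{N}^{x_{i,n}}$ and $\mathcal{A}_{N}^{\{x_{j,m},y_{p}\}}\setminus\mathcal{E}_{N}^{x_{j,m}}$.

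The two quantitative inputs are then exact analogues of Lemmas \ref{lem5spl} and \ref{lem6spl}. First, a flow-norm estimate: summing \eqref{flnormdef} over all the tubes, using \eqref{muneq1}, \eqref{munprop}, the geometric sum over $\eta_{y_{p}}$, and Riemann-sum convergence of the $k$-sum, one obtains
\[
\|\psi_{\mathrm{test}}^{A}\|_{N}^{2}\le(1+o(1))\,\frac{|S_{\star}|\,N}{d_{N}^{2}}\,\Big(\sum_{i\in A}\sum_{j\in B}\tfrac{1}{\mathfrak{R}_{i,j}}\Big)^{-1};
\]
the extra factor $|S_{\star}|$ relative to the simple case comes from $NZ_{N}/d_{N}\to|S_{\star}|$ in \eqref{munprop}, and the weights across distinct $(n,m,p)$ combine precisely into the denominator of $\mathfrak{R}_{i,j}$ by the same algebraic identity $\sum_{p}(\text{term}_p)=1$ used at the end of the proof of Lemma \ref{lem1spl}. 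Second, the divergence-pairing estimate: by Lemma \ref{eqpotest}, applied with $A$, $B$ as the target sets and with $a=x_{i,n}$ ($i\in A$), $b=x_{j,m}$ ($j\in B$), $c=y_{p}$ (the hypothesis $r(y_p,x_{i,n})>r(x_{i,n},y_p)>0$ is exactly Condition \ref{splcond}-type positivity, here guaranteed by $p\in P_{i,n,j,m}$), the equilibrium potential $h_{\mathcal{E}_{N}^{(2)}(A),\mathcal{E}_{N}^{(2)}(B)}$ is $1+o(1)$ along the $x_{i,n}$-legs and $o(1)$ along the $x_{j,m}$-legs, uniformly over $1\le\ell\le\lfloor N/2\rfloor$. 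Summing $(\mathrm{div}\,\psi_{\mathrm{test}}^{A})$ over those legs, the geometric series in $\ell$ telescopes and one gets
\[
\sum_{\eta\in\mathcal{H}_{N}\setminus\mathcal{E}_{N}^{\star}}h_{\mathcal{E}_{N}^{(2)}(A),\mathcal{E}_{N}^{(2)}(B)}(\eta)(\mathrm{div}\,\psi_{\mathrm{test}}^{A})(\eta)=(1+o(1))\sum_{i\in A}\sum_{j\in B}\frac{1}{\mathfrak{R}_{i,j}}.
\]
Plugging these two estimates into \eqref{genThomeq} and dividing yields $\mathrm{Cap}_{N}(\mathcal{E}_{N}^{(2)}(A),\mathcal{E}_{N}^{(2)}(B))\ge(1+o(1))\frac{d_{N}^{2}}{|S_{\star}|N}\sum_{i\in A}\sum_{j\in B}\frac{1}{\mathfrak{R}_{i,j}}$, which is \eqref{LBeq}.

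The main obstacle is \emph{not} the Thomson-principle machinery itself but the bookkeeping of the many overlapping tubes: a given tube $\mathcal{A}_{N}^{\{x_{i,n},y_{p},x_{j,m}\}}$ shares its boundary faces $\mathcal{A}_{N}^{\{x_{i,n},y_{p}\}}$ with all other tubes through the same pair $(x_{i,n},y_p)$, so when computing $(\mathrm{div}\,\psi_{\mathrm{test}}^{A})$ on a face one must correctly add the contributions of every tube meeting that face, and in the flow-norm one must avoid double-counting edges. Getting the normalization right so that the cross-$(n,m,p)$ sums reassemble exactly into $1/\mathfrak{R}_{i,j}$ (and not some larger effective resistance) is the delicate algebraic point; it works because $\psi_{\mathrm{test}}^{A}$ on each tube is weighted by the tube's own conductance divided by the \emph{total} conductance $\sum_{\widetilde n,\widetilde m,q}(\cdots)$, which is the precise analogue of the construction in \eqref{testflowspl}. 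A secondary technical point is verifying that $\psi_{\mathrm{test}}^{A}$ is compatible with $\mathbf{q}_{N}$ on faces where $r(x_{i,n},y_p)$ or $r(x_{j,m},y_p)$ vanishes, which is handled exactly as the ``$0/\infty$'' conventions in Subsection \ref{ss-testfcndef} — there the flow is simply set to $0$, consistently with restricting to $p\in P_{i,n,j,m}$. Apart from these, every step is a routine adaptation of Lemmas \ref{lem5spl} and \ref{lem6spl}, so the proof will be written concisely.
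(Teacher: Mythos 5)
Your proposal follows the paper's own route essentially verbatim: the paper constructs exactly this test flow (\eqref{testflow}--\eqref{testflow2}), supported on the tubes indexed by $(i,j)\in I\cap(A\times B)$ and $p\in P_{i,n,j,m}$, proves a flow-norm bound (Lemma \ref{lem5}) and the divergence--equilibrium-potential pairing via Lemma \ref{eqpotest} (Lemma \ref{lem6}), and concludes with Theorem \ref{genThom}, exactly as you outline.

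Two corrections are needed, however. First, your flow-norm display is wrong as written: the bound must be $\left\Vert \psi_{\mathrm{test}}^{A}\right\Vert _{N}^{2}\le(1+o(1))\frac{|S_{\star}|N}{d_{N}^{2}}\sum_{i\in A}\sum_{j\in B}\frac{1}{\mathfrak{R}_{i,j}}$, \emph{without} the outer inverse; compare Lemma \ref{lem5spl}, where $|S_{\star}|=2$ and the bound is $\frac{2N}{d_{N}^{2}\mathfrak{R}}$, not $\frac{2N\mathfrak{R}}{d_{N}^{2}}$. With your inverted bound, inserting the two displayed estimates into \eqref{genThomeq} would produce $\big(\sum_{i\in A}\sum_{j\in B}\mathfrak{R}_{i,j}^{-1}\big)^{3}$ rather than $\sum_{i\in A}\sum_{j\in B}\mathfrak{R}_{i,j}^{-1}$, so the final inequality does not follow from the estimates you state, even though the mechanism you describe (Riemann sums plus the recombination of the $(n,m,p)$-weights into the denominator of $\mathfrak{R}_{i,j}$) in fact yields the correct, non-inverted bound. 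Second, you should dispose of the degenerate case $I\cap(A\times B)=\emptyset$ separately: there your flow is identically zero, so Theorem \ref{genThom} (which requires a non-trivial flow) is inapplicable; but then $\mathfrak{R}_{i,j}=\infty$ for all $i\in A$, $j\in B$, the right-hand side of \eqref{LBeq} vanishes, and there is nothing to prove---the paper makes exactly this reduction at the start of its proof. A minor point of precision: the hypothesis of Lemma \ref{eqpotest} that $r(y_{p},x_{i,n})>r(x_{i,n},y_{p})>0$ is not purely a consequence of $p\in P_{i,n,j,m}$ (which gives only positivity); the strict inequality comes from reversibility together with $m(x_{i,n})=M_{\star}>m(y_{p})$, and this should be said when invoking the lemma.
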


We construct a test flow, whose divergence can be handled outside
the one-dimensional tubes.

\subsection{Construction of test flow $\psi_{\textup{test}}^{A}$}

In this subsection, we build the test flow $\psi=\psi_{\textup{test}}^{A}$
on $\mathcal{H}_{N}$.
\begin{itemize}
\item We define, for $(i,\,j)\in I\cap(A\times B)$, $n,\,m\ge1$, $p\in P_{i,\,n,\,j,\,m}$,
$k\ge1$, $N-\ell-k\ge1$, and $\ell\in\llbracket0,\,\frac{N}{2}-1\rrbracket$,
\begin{equation}
\psi_{0}^{A}(\xi_{k,\,\ell}^{x_{i,\,n}y_{p}x_{j,\,m}},\,\xi_{k-1,\,\ell+1}^{x_{i,\,n}y_{p}x_{j,\,m}})=\frac{m_{\star}(y_{p})^{\ell}\big/(\frac{N-k-\ell-1}{r(x_{i,\,n},\,y_{p})}+\frac{k+\ell}{r(x_{j,\,m},\,y_{p})})}{\mathfrak{R}_{i,\,j}\sum_{n=1}^{\mathfrak{n}(i)}\sum_{m=1}^{\mathfrak{n}(j)}\sum_{q=1}^{\kappa_{0}}\frac{(1-m_{\star}(y_{q}))^{-1}}{\frac{N-k-\ell-1}{r(x_{i,\,n},\,y_{q})}+\frac{k+\ell}{r(x_{j,\,m},\,y_{q})}}},\label{testflow}
\end{equation}
\begin{equation}
\psi_{0}^{A}(\xi_{k,\,\ell}^{x_{i,\,n}y_{p}x_{j,\,m}},\,\xi_{k,\,\ell+1}^{x_{i,\,n}y_{p}x_{j,\,m}})=\frac{-m_{\star}(y_{p})^{\ell}\big/(\frac{N-k-\ell-1}{r(x_{i,\,n},\,y_{p})}+\frac{k+\ell}{r(x_{j,\,m},\,y_{p})})}{\mathfrak{R}_{i,j}\sum_{n=1}^{\mathfrak{n}(i)}\sum_{m=1}^{\mathfrak{n}(j)}\sum_{q=1}^{\kappa_{0}}\frac{(1-m_{\star}(y_{q}))^{-1}}{\frac{N-k-\ell-1}{r(x_{i,\,n},\,y_{q})}+\frac{k+\ell}{r(x_{j,\,m},\,y_{q})}}},\label{testflow2}
\end{equation}
and $0$ otherwise.
\item Then, for all $(i,\,j)\in I\cap(A\times B)$, $n,\,m\ge1$, $p\in P_{i,\,n,\,j,\,m}$,
and $k\in\llbracket1,\,N-1\rrbracket$, we define a correction flow
$\phi_{i,\,j,\,p,\,k}^{A}$ as follows. 
\begin{itemize}
\item Suppose that $\frac{N}{2}<k\le N-1$. Then, for $\ell\in\llbracket1,\,\frac{N}{2}\rrbracket$,
\[
\phi_{i,\,j,\,p,\,k}^{A}(\xi_{k-\ell,\,\ell}^{x_{i,\,n}y_{p}x_{j,\,m}},\,\xi_{k-\ell+1,\,\ell-1}^{x_{i,\,n}y_{p}x_{j,\,m}}):=-\sum_{t=\ell}^{\lfloor N/2\rfloor}(\mathrm{div}\,\psi_{0})(\zeta_{k-t,\,t}^{x_{i,\,n}y_{p}x_{j,\,m}}),
\]
and $\phi_{i,\,j,\,p,\,k}^{A}=0$ on all other edges.
\item Suppose that $1\le k\le\frac{N}{2}$. Then, for $\ell\in\llbracket1,\,k-1\rrbracket$,
\[
\phi_{i,\,j,\,p,\,k}^{A}(\xi_{k-\ell,\,\ell}^{x_{i,\,n}y_{p}x_{j,\,m}},\,\xi_{k-\ell+1,\,\ell-1}^{x_{i,\,n}y_{p}x_{j,\,m}}):=-\sum_{t=\ell}^{k-1}(\mathrm{div}\,\psi_{0})(\xi_{k-t,\,t}^{x_{i,\,n}y_{p}x_{j,\,m}}),
\]
and $\phi_{i,\,j,\,p,\,k}^{A}=0$ on all other edges.
\end{itemize}
\item Finally, we define a flow
\end{itemize}
\[
\psi=\psi_{\textup{test}}^{A}:=\psi_{0}^{A}+\sum_{(i,\,j)\in I\cap(A\times B)}\sum_{n=1}^{\mathfrak{n}(i)}\sum_{m=1}^{\mathfrak{m}(j)}\sum_{p\in P_{i,\,n,\,j,\,m}}\sum_{k=1}^{N-1}\phi_{i,\,j,\,p,\,k}^{A}.
\]
Then, observe that $(\mathrm{div}\,\psi)(\xi^{x_{i,\,n}})=0$ for
all $i\in\llbracket1,\,\kappa_{\star}\rrbracket$ and $n\in\llbracket1,\,\mathfrak{n}(i)\rrbracket$.
Moreover, it holds that $(\mathrm{div}\,\psi)(\eta)=0$ for all $\eta$
in 
\[
\mathcal{A}_{N}^{x_{i,\,n}y_{p}x_{j,\,m}}\setminus(\mathcal{A}_{N}^{x_{i,\,n}y_{p}}\cup\mathcal{A}_{N}^{x_{j,\,m}y_{p}})\text{ for }i\ne j\text{ and }n,\,m,\,p\ge1.
\]

\subsection{Flow norm of $\psi_{\textup{test}}^{A}$}

In this subsection, we calculate the flow norm of the test flow $\psi$. 
\begin{lem}
\label{lem5}Suppose that $I\cap(A\times B)\ne\emptyset$. Then, under
the conditions of Theorem \ref{main}, 
\[
\|\psi\|_{N}^{2}\le(1+o(1))\frac{|S_{\star}|N}{d_{N}^{2}}\Big(\sum_{i\in A}\sum_{j\in B}\frac{1}{\mathfrak{R}_{i,\,j}}\Big).
\]
\end{lem}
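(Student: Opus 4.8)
The plan is to mimic the proof of Lemma \ref{lem5spl} almost verbatim, with the extra book-keeping introduced by the indices $i,j,n,m,p$. First I would expand $\|\psi\|_N^2$ using Definition \ref{flowdef}(2) together with the explicit formulas \eqref{testflow} and \eqref{testflow2}. Since $\psi$ is supported only on edges of the form $\eta \leftrightarrow \sigma^{x_{i,n},y_p}\eta$ and $\eta\leftrightarrow\sigma^{x_{j,m},y_p}\eta$ for $\eta=(k,\ell)_{i,n,p,j,m}$ with $(i,j)\in I\cap(A\times B)$, $p\in P_{i,n,j,m}$, $k\ge1$, $N-\ell-k\ge1$, and $0\le \ell\le \lfloor N/2\rfloor-1$, the norm splits as
\begin{align*}
\|\psi\|_N^2 = \sum_{(i,j)\in I\cap(A\times B)}\sum_{n,m}\sum_{p\in P_{i,n,j,m}}\sum_{\ell=0}^{\lfloor N/2\rfloor-1}\sum_{k=1}^{N-\ell-1}\Bigg[&\frac{(\psi(\eta,\sigma^{x_{i,n},y_p}\eta))^2}{\mu_N(\eta)\,k(d_N+\ell)\,r(x_{i,n},y_p)}\\
&+\frac{(\psi(\eta,\sigma^{x_{j,m},y_p}\eta))^2}{\mu_N(\eta)\,(N-\ell-k)(d_N+\ell)\,r(x_{j,m},y_p)}\Bigg].
\end{align*}
Then I would substitute $\mu_N(\eta)\simeq \frac{d_N}{N|S_\star|}m_\star(y_p)^\ell$ (from \eqref{muneq1}, \eqref{munprop}, and $\lim NZ_N/d_N=|S_\star|$) and the explicit numerators, so that the $\mu_N(\eta)$ in the denominator produces a factor $\frac{N|S_\star|}{d_N^2}\,m_\star(y_p)^{-\ell}$ which cancels the $m_\star(y_p)^{2\ell}$ from $\psi^2$, leaving one power $m_\star(y_p)^\ell$.

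Next I would do the Riemann-sum estimate. After the substitution, the $x_{i,n}$-part of the sum reads, up to $(1+o(1))$,
\[
\frac{|S_\star|}{d_N^2 N}\sum_{(i,j)\in I\cap(A\times B)}\sum_{n,m}\sum_{p\in P_{i,n,j,m}}\sum_{\ell=0}^{\lfloor N/2\rfloor-1}\frac{m_\star(y_p)^\ell}{\mathfrak{R}_{i,j}^2}\sum_{k=1}^{N-\ell-1}\frac{\frac{N-\ell-k}{r(x_{i,n},y_p)}\big/\big(\frac{N-k-\ell-1}{r(x_{i,n},y_p)}+\frac{k+\ell}{r(x_{j,m},y_p)}\big)^2}{\Big\{\sum_{\widetilde n,\widetilde m,q}\frac{1}{(1-m_\star(y_q))(\frac{N-k-\ell-1}{r(x_{i,\widetilde n},y_q)}+\frac{k+\ell}{r(x_{j,\widetilde m},y_q)})}\Big\}^2}.
\]
Writing $N-\ell-k=(N-k-\ell-1)+1$ and comparing the inner sum to $N^2\int_0^1(\cdots)\,dt$ exactly as in the passage from the displayed line before \eqref{lem1eq4spl} to \eqref{lem1eq4spl}, the error in the Riemann approximation is $O(N)$ (swallowed by the $O(1/N)$ correction), and the $\frac{1}{N-\ell-k}$ versus $\frac{1}{N-k-1}$ discrepancy is handled by $\frac{N-k-1}{N-\ell-k}\le \ell+1$ as before. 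Summing the geometric series $\sum_{\ell\ge0}m_\star(y_p)^\ell(\ell+1)^2<\infty$ gives the factor $\frac{1}{1-m_\star(y_p)}$, so the $x_{i,n}$-part is at most
\[
\frac{|S_\star|N}{d_N^2}\sum_{(i,j)\in I\cap(A\times B)}\frac{1}{\mathfrak{R}_{i,j}^2}\sum_{n,m}\sum_{p\in P_{i,n,j,m}}\int_0^1\frac{\frac{1-t}{r(x_{i,n},y_p)}\big/\big(\frac{1-t}{r(x_{i,n},y_p)}+\frac{t}{r(x_{j,m},y_p)}\big)^2}{\Big\{\sum_{\widetilde n,\widetilde m,q}\frac{1}{(1-m_\star(y_q))(\frac{1-t}{r(x_{i,\widetilde n},y_q)}+\frac{t}{r(x_{j,\widetilde m},y_q)})}\Big\}^2}\,dt + o\Big(\frac{N}{d_N^2}\Big),
\]
and the $x_{j,m}$-part is the same with $\frac{1-t}{r(x_{i,n},y_p)}$ replaced by $\frac{t}{r(x_{j,m},y_p)}$ in the numerator.

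Finally I would combine the two parts: adding the numerators gives $\frac{1-t}{r(x_{i,n},y_p)}+\frac{t}{r(x_{j,m},y_p)}$ over its own square, i.e. $\big(\frac{1-t}{r(x_{i,n},y_p)}+\frac{t}{r(x_{j,m},y_p)}\big)^{-1}$, so for fixed $(i,j)$ the triple sum over $n,m,p$ of the integrand becomes precisely $\int_0^1 \big(\text{denominator of }\mathfrak{R}_{i,j}\text{'s integrand}\big)^{-1}\,dt$ times one copy of that denominator in the numerator — that is, it collapses to $\int_0^1 \frac{1}{\sum_{n,m,p}\frac{1}{(1-m_\star(y_p))(\frac{1-t}{r(x_{i,n},y_p)}+\frac{t}{r(x_{j,m},y_p)})}}\,dt = \mathfrak{R}_{i,j}$, by the identity $\frac{\sum_a c_a}{(\sum_a c_a)^2}=\frac{1}{\sum_a c_a}$ applied to $c_a = \frac{1}{(1-m_\star(y_p))(\cdots)}$. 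Hence $\|\psi\|_N^2 \le (1+o(1))\frac{|S_\star|N}{d_N^2}\sum_{(i,j)\in I\cap(A\times B)}\frac{\mathfrak{R}_{i,j}}{\mathfrak{R}_{i,j}^2} = (1+o(1))\frac{|S_\star|N}{d_N^2}\sum_{(i,j)\in I\cap(A\times B)}\frac{1}{\mathfrak{R}_{i,j}}$, and since $\mathfrak{R}_{i,j}=\infty$ (so $1/\mathfrak{R}_{i,j}=0$) whenever $(i,j)\notin I$, the sum over $I\cap(A\times B)$ equals $\sum_{i\in A}\sum_{j\in B}\frac{1}{\mathfrak{R}_{i,j}}$, which is the claim. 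The main obstacle I anticipate is not any single estimate — each mirrors the simple case — but rather keeping the algebra of the nested index sums $(i,j,n,m,p)$ transparent enough that the final ``collapse'' identity is visibly correct; in particular one must be careful that the common denominator appearing in $\psi$ sums over $\widetilde n,\widetilde m$ (all of $S_i^{(2)}$ and $S_j^{(2)}$) while the numerator fixes $n,m$, so that after squaring and Riemann-summing the cross terms reassemble correctly. A secondary point requiring care is the treatment of $p\in P_{i,n,j,m}$ where one of $r(x_{i,n},y_p), r(x_{j,m},y_p)$ may vanish for other $(\widetilde n,\widetilde m)$; the convention ``$1/\infty=0$'' in \eqref{Tijdef} must be applied consistently so that such terms drop out of the denominators without affecting the limit.
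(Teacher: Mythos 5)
Your proposal is correct and takes the same route as the paper, which for this lemma simply notes that the argument is almost identical to Lemma \ref{lem5spl} and omits it; your write-up is exactly that adaptation (expansion of the flow norm over the supported edges, $w_N$/$Z_N$ asymptotics, the Riemann-sum and geometric-series estimates, and the final collapse identity over $n,m,p$ yielding $\mathfrak{R}_{i,j}^{-1}$). One cosmetic point: the intermediate shorthand $\mu_N(\eta)\simeq\frac{d_N}{N|S_\star|}m_\star(y_p)^{\ell}$ omits the $w_N(\eta_x)$ factors of the occupied sites, but the displayed formula you actually use carries the correct prefactor $\frac{|S_\star|}{d_N^2 N}$ with $\frac{N-\ell-k}{r(x_{i,n},y_p)}$ retained in the numerator, so the argument is unaffected.
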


\begin{proof}
The proof is almost identical to that of Lemma \ref{lem5spl}; therefore,
we omit it. 
\end{proof}

\subsection{\label{ss-remflow}Remaining terms}

We estimate the remaining terms on the right-hand side of \eqref{genThomeq}
with respect to $\psi$. Lemma \ref{eqpotest} is employed once more. 
\begin{lem}
\label{lem6}Suppose that $I\cap(A\times B)\ne\emptyset$. Then, under
the conditions of Theorem \ref{main}, 
\[
\sum_{\eta\in\mathcal{H}_{N}\setminus\mathcal{E}_{N}^{\star}}h_{\mathcal{E}_{N}^{(2)}(A),\,\mathcal{E}_{N}^{(2)}(B)}(\eta)(\mathrm{div}\,\psi)(\eta)=(1+o(1))\Big(\sum_{i\in A}\sum_{j\in B}\frac{1}{\mathfrak{R}_{i,\,j}}\Big).
\]
\end{lem}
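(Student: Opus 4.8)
The plan is to mirror the proof of Lemma \ref{lem6spl}. Write $h=h_{\mathcal{E}_N^{(2)}(A),\mathcal{E}_N^{(2)}(B)}$ and put $\mathbf A=\bigcup_{i\in A}S_i^{(2)}$, $\mathbf B=\bigcup_{j\in B}S_j^{(2)}$; these are disjoint subsets of $S$ because $\{A,B\}$ partitions $\llbracket1,\kappa_\star\rrbracket$, so $h=h_{\mathcal{E}_N(\mathbf A),\mathcal{E}_N(\mathbf B)}$. By the two facts recorded just after \eqref{testflow2}, $(\mathrm{div}\,\psi)$ vanishes at every singleton valley and at every configuration of $\mathcal{A}_N^{\{x_{i,n},y_p,x_{j,m}\}}\setminus(\mathcal{A}_N^{\{x_{i,n},y_p\}}\cup\mathcal{A}_N^{\{x_{j,m},y_p\}})$. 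Inspecting \eqref{testflow}--\eqref{testflow2}, the divergence at a tube point $\zeta_\ell^{x_{i,n},y_p}$ with $x_{i,n}\in S_\star$, $1\le\ell\le\lfloor N/2\rfloor$, and $r(x_{i,n},y_p)>0$ is the sum, over all $(j,m)$ with $(i,j)\in I\cap(A\times B)$ and $r(x_{j,m},y_p)>0$, of the flow along the edge from the configuration $(N-\ell,\ell-1)_{i,n,p,j,m}$ (one particle at $x_{j,m}$); for $\ell>\lfloor N/2\rfloor$ even this edge carries zero flow, its definition requiring the $y_p$-coordinate to be $\le\lfloor N/2\rfloor-1$. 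After deleting the valleys the relevant tubes are pairwise disjoint --- a non-valley element of $\mathcal{A}_N^{\{x,y\}}$ has exactly the two non-empty sites $x,y$ --- so the left-hand side of the lemma equals, after reindexing, a sum over the tuples $(i,j)\in I\cap(A\times B)$, $1\le n\le\mathfrak n(i)$, $1\le m\le\mathfrak n(j)$, $p\in P_{i,n,j,m}$ of $\sum_{\ell=1}^{\lfloor N/2\rfloor}\big[h(\zeta_\ell^{x_{i,n},y_p})D_{i,n,p,j,m}(\ell)+h(\zeta_\ell^{x_{j,m},y_p})D'_{i,n,p,j,m}(\ell)\big]$, where $D,D'$ are the corresponding per-tube divergence contributions.

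Next I would feed this into Lemma \ref{eqpotest}. For $(i,j)\in I\cap(A\times B)$ and $p\in P_{i,n,j,m}$ we have $r(x_{i,n},y_p)>0$ and $r(x_{j,m},y_p)>0$, and since $x_{i,n},x_{j,m}\in S_\star$ while $y_p\notin S_\star$, reversibility forces $r(y_p,x_{i,n})=m_\star(y_p)^{-1}r(x_{i,n},y_p)>r(x_{i,n},y_p)$ and $r(y_p,x_{j,m})>r(x_{j,m},y_p)$. Hence Lemma \ref{eqpotest} applies with $a=x_{i,n}\in\mathbf A$, $b=x_{j,m}\in\mathbf B$, $c=y_p$, giving $|h(\zeta_\ell^{x_{i,n},y_p})-1|=o(1)$ and $h(\zeta_\ell^{x_{j,m},y_p})=o(1)$ uniformly for $0\le\ell\le\lfloor N/2\rfloor$ and over the finitely many tuples. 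Since $\sum_\ell|D'_{i,n,p,j,m}(\ell)|=O(1)$ (see below), the $\zeta^{x_{j,m},y_p}$-terms contribute only $o(1)$, and the whole expression becomes $(1+o(1))\sum\sum_{\ell=1}^{\lfloor N/2\rfloor}D_{i,n,p,j,m}(\ell)$.

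It then remains to compute $\sum_\ell D_{i,n,p,j,m}(\ell)$. Evaluating \eqref{testflow2} at the configuration $(N-\ell,\ell-1)_{i,n,p,j,m}$ (where the $x_{i,n}$-coefficient is $0$, the $x_{j,m}$-coefficient is $N-1$, and the terms of the normalizing sum with $r(x_{i,\widetilde n},y_q)=0$ or $r(x_{j,\widetilde m},y_q)=0$ drop out by the conventions in (U1)) gives
\[
D_{i,n,p,j,m}(\ell)=\frac{m_\star(y_p)^{\ell-1}\,r(x_{j,m},y_p)}{\mathfrak R_{i,j}\displaystyle\sum_{(\widetilde n,\widetilde m,q)\,:\,r(x_{i,\widetilde n},y_q)r(x_{j,\widetilde m},y_q)>0}\frac{r(x_{j,\widetilde m},y_q)}{1-m_\star(y_q)}},
\]
and, by the $(i,n)\leftrightarrow(j,m)$ symmetry in (U1), the analogue for $D'$ gives $\sum_\ell|D'_{i,n,p,j,m}(\ell)|=O(1)$. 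Fixing $(i,j)\in I\cap(A\times B)$ and summing $\sum_{\ell=1}^{\lfloor N/2\rfloor}m_\star(y_p)^{\ell-1}=(1+o(1))/(1-m_\star(y_p))$, then summing over $(n,m,p)$, the numerator becomes $(1+o(1))\sum_{(n,m,p)\,:\,r(x_{i,n},y_p)r(x_{j,m},y_p)>0}r(x_{j,m},y_p)/(1-m_\star(y_p))$, which is exactly the denominator sum; hence the pair $(i,j)$ contributes $(1+o(1))/\mathfrak R_{i,j}$. Summing over $(i,j)\in I\cap(A\times B)$ and recalling $1/\mathfrak R_{i,j}=0$ when $(i,j)\notin I$ yields the claimed $(1+o(1))\sum_{i\in A}\sum_{j\in B}1/\mathfrak R_{i,j}$.

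The hard part is the cancellation in that last step: one must recognize that after the geometric summation in $\ell$ the triple sum over $(n,m,p)$ in the numerator runs over precisely the index set $\{(n,m,p):r(x_{i,n},y_p)r(x_{j,m},y_p)>0\}$ defining the normalization of the test flow \eqref{testflow}--\eqref{testflow2} (equivalently, the $t=1$ boundary term of $\mathfrak R_{i,j}^N$), so that the quotient collapses to $1$. This matching of the two index sets is exactly what the "$0/0$''/"$\infty$-in-the-bracket'' conventions in (U1) are engineered to deliver, and it is the reason the flow is normalized as it is. The remaining ingredients --- uniformity of the $o(1)$ in Lemma \ref{eqpotest} (automatic since $S$ is finite) and the $O(1)$ bound on $\sum_\ell D'$ --- are routine.
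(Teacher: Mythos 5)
Your proposal is correct and follows essentially the same route as the paper, which omits the proof of Lemma \ref{lem6} precisely because it is the generalization of Lemma \ref{lem6spl}: restrict the divergence to the two-site tube configurations $\zeta_\ell^{x_{i,n},y_p}$, $\zeta_\ell^{x_{j,m},y_p}$ with $\ell\le\lfloor N/2\rfloor$, apply Lemma \ref{eqpotest} there, and sum the geometric series so that the numerator collapses against the normalizing sum in \eqref{testflow}--\eqref{testflow2}, yielding $1/\mathfrak{R}_{i,j}$ per pair $(i,j)\in I\cap(A\times B)$. Your identification of the index-set matching (via the conventions for vanishing rates) as the crux of the cancellation is exactly the point of the construction, and the auxiliary estimates (uniformity of the $o(1)$, the $O(1)$ bound on the $B$-side divergences) are handled as in the simple case.
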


\begin{proof}
We omit the proof due to its similarity to Lemma \ref{lem6spl}. 
\end{proof}

\subsection{Proof of Proposition \ref{LB}}

We are now ready to prove Proposition \ref{LB}. 
\begin{proof}[Proof of Proposition \ref{LB}]
 There remains nothing to prove if $I\cap(A\times B)=\emptyset$,
as then the right-hand side of \eqref{LBeq} equals $0$. Thus, we
may assume $I\cap(A\times B)\ne\emptyset$. Then, by Lemmas \ref{lem5}
and \ref{lem6}, we have 
\[
\frac{1}{\|\psi_{\textup{test}}^{A}\|_{N}^{2}}\Big[\sum_{\eta\notin\mathcal{E}_{N}^{\star}}h_{\mathcal{E}_{N}^{(2)}(A),\,\mathcal{E}_{N}^{(2)}(B)}(\eta)(\mathrm{div}\,\psi_{\textup{test}}^{A})(\eta)\Big]^{2}\ge(1+o(1))\frac{d_{N}^{2}}{|S_{\star}|N}\sum_{i\in A}\sum_{j\in B}\frac{1}{\mathfrak{R}_{i,\,j}}.
\]
Therefore, we deduce from Theorem \ref{genThom} that 
\[
\mathrm{Cap}_{N}(\mathcal{E}_{N}^{(2)}(A),\,\mathcal{E}_{N}^{(2)}(B))\ge(1+o(1))\frac{d_{N}^{2}}{|S_{\star}|N}\sum_{i\in A}\sum_{j\in B}\frac{1}{\mathfrak{R}_{i,\,j}}.
\]
This concludes the proof of Proposition \ref{LB}. 
\end{proof}

\section{\label{s-H1}Proof of Condition \eqref{H1}}

In this section, we prove the condition \eqref{H1} formulated in
Proposition \ref{B-L conv}. 
\begin{prop}
\label{H1prop}The condition \eqref{H1} holds for every $i\in\llbracket1,\,\kappa_{\star}\rrbracket$. 
\end{prop}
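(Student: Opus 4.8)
The plan is to estimate the numerator and the denominator of the quotient in \eqref{H1} separately, bounding the numerator by Proposition \ref{UB} and the denominator by a Thomson--principle flow argument along one--dimensional tubes. Fix $1\le i\le\kappa_\star$. If $\mathfrak{n}(i)=1$, then $\mathcal{E}_N^{(2)}(i)=\{\xi_N^{x_{i,1}}\}$ is a singleton, there is no pair of disjoint singletons contained in it, and \eqref{H1} holds trivially (with the convention $\inf\emptyset=+\infty$); so assume $\mathfrak{n}(i)\ge2$. For the numerator, take the non-trivial partition $A=\{i\}$, $B=\llbracket1,\kappa_\star\rrbracket\setminus\{i\}$ (non-trivial because $\kappa_\star\ge2$). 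Since the sets $\mathcal{E}_N^{(2)}(j)$, $1\le j\le\kappa_\star$, partition $\mathcal{E}_N^\star$, we have $\mathcal{E}_N^{(2)}(B)=\mathcal{E}_N^\star\setminus\mathcal{E}_N^{(2)}(i)$, so Proposition \ref{UB} yields a constant $C_i$ with
\[
\mathrm{Cap}_N\big(\mathcal{E}_N^{(2)}(i),\,\mathcal{E}_N^\star\setminus\mathcal{E}_N^{(2)}(i)\big)\ \le\ C_i\,\frac{d_N^2}{N}\qquad\text{for all large }N.
\]

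For the denominator, since $\mathcal{E}_N^{(2)}(i)=\{\xi_N^{x_{i,1}},\dots,\xi_N^{x_{i,\mathfrak{n}(i)}}\}$ is finite, it suffices to find, for each pair $n\ne n'$, a constant $c_{n,n'}>0$ with $\mathrm{Cap}_N(\{\xi_N^{x_{i,n}}\},\{\xi_N^{x_{i,n'}}\})\ge c_{n,n'}d_N$ for all large $N$; then $\inf_{\eta,\zeta\in\mathcal{E}_N^{(2)}(i)}\mathrm{Cap}_N(\{\eta\},\{\zeta\})\ge c_i d_N$ with $c_i=\min_{n\ne n'}c_{n,n'}>0$. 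Because $S_i^{(2)}$ is an irreducible component of $X_{\mathrm{first}}(\cdot)$, there is a simple path $x_{i,n}=z_0,z_1,\dots,z_L=x_{i,n'}$ inside $S_i^{(2)}$ with $r(z_l,z_{l+1})>0$; as $z_l,z_{l+1}\in S_\star$, reversibility forces $r(z_l,z_{l+1})=r(z_{l+1},z_l)$ and $m_\star(z_l)=1$. I would then apply Theorem \ref{Diri-Thom}(2) with the unit flow $\phi_0$ obtained by concatenating along this path the tubes $\mathcal{A}_N^{z_l,z_{l+1}}$, each carrying unit flow from $\xi_N^{z_l}$ to $\xi_N^{z_{l+1}}$ through the configurations $\zeta_k^{z_l,z_{l+1}}$; consecutive tubes meet only at the single configuration $\xi_N^{z_{l+1}}$ and non-consecutive ones are disjoint, so $\phi_0$ has divergence $1$ at $\xi_N^{z_0}$, $-1$ at $\xi_N^{z_L}$, and $0$ elsewhere, as required. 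Its flow norm is $\|\phi_0\|_N^2=\sum_{l=0}^{L-1}\sum_{k=0}^{N-1}\big(c_k^{(l)}\big)^{-1}$ with $c_k^{(l)}=\mu_N(\zeta_k^{z_l,z_{l+1}})\,\mathbf{q}_N(\zeta_k^{z_l,z_{l+1}},\zeta_{k+1}^{z_l,z_{l+1}})$.

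The crux is the uniform conductance estimate. Writing $c_k^{(l)}=\big[(N-k)w_N(N-k)\big]\big[(d_N+k)w_N(k)\big]\,r(z_l,z_{l+1})/Z_N$ (using $m_\star\equiv1$ on $S_\star$ in \eqref{muneq1} and $\mathbf{q}_N(\zeta_k,\zeta_{k+1})=(N-k)(d_N+k)r(z_l,z_{l+1})$), the uniform asymptotics of \eqref{munprop} for $(d_N+k)w_N(k)=(k+1)w_N(k+1)$, together with $NZ_N/d_N\to|S_\star|$, show that $c_k^{(l)}$ is bounded below by a positive constant times $Nd_N$, uniformly in $k$ and $l$ for large $N$ --- this is exactly the first--time--scale scaling. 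Hence $\|\phi_0\|_N^2\le C\,L/d_N$, and Theorem \ref{Diri-Thom}(2) gives $\mathrm{Cap}_N(\{\xi_N^{x_{i,n}}\},\{\xi_N^{x_{i,n'}}\})\ge\|\phi_0\|_N^{-2}\ge c_{n,n'}d_N$. Combining with the numerator bound,
\[
\frac{\mathrm{Cap}_N\big(\mathcal{E}_N^{(2)}(i),\mathcal{E}_N^\star\setminus\mathcal{E}_N^{(2)}(i)\big)}{\inf_{\eta,\zeta\in\mathcal{E}_N^{(2)}(i)}\mathrm{Cap}_N(\{\eta\},\{\zeta\})}\ \le\ \frac{C_i\,d_N^2/N}{c_i\,d_N}\ =\ \frac{C_i}{c_i}\cdot\frac{d_N}{N}\ \longrightarrow\ 0,
\]
which is \eqref{H1}. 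I expect the only genuine work to be the uniform edge--conductance estimate: a routine but slightly fiddly computation with the Gamma--function weights $w_N$, essentially repackaging the first--time--scale analysis of \cite{B-D-G}. Everything else is bookkeeping together with the already--proven Proposition \ref{UB} and the Thomson principle.
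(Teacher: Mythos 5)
Your proposal is correct, and its skeleton is the same as the paper's: bound the numerator by $O(d_N^2/N)$ using the general capacity upper bound with the partition $A=\{i\}$, $B=\llbracket1,\kappa_\star\rrbracket\setminus\{i\}$ (the paper cites Proposition \ref{UBspl} at this point, but Proposition \ref{UB} is what is meant and what you use), and bound the denominator from below by a constant times $d_N$ using the one-dimensional tubes along a path inside $S_i^{(2)}$, where reversibility and $m_\star\equiv1$ on $S_\star$ make every edge conductance of order $Nd_N$ uniformly. The only methodological difference is in how the denominator bound is extracted from that conductance estimate: the paper applies the Dirichlet principle to an arbitrary test function $F$, discards all edges outside the tubes, and uses the Cauchy--Schwarz inequality twice (over $0\le j\le N-1$ within each tube, then over the edges of the path) to reach $\mathrm{Cap}_N(\{\xi_N^x\},\{\xi_N^y\})\ge(1+o(1))\frac{d_N}{|S_\star|}\cdot\frac{\min\{r(u,v)>0:u,v\in S_i^{(2)}\}}{|S_i^{(2)}|}$, whereas you invoke the Thomson principle with the explicit concatenated unit path flow and sum the resistances $1/c_k^{(l)}$. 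These are dual formulations of the same series-of-conductances computation and give the same order-$d_N$ lower bound, so neither buys anything quantitative over the other; your flow construction and its divergence bookkeeping (simple path, tubes meeting only at the intermediate valleys) are sound, and your uniform estimate $c_k^{(l)}\gtrsim Nd_N$ matches the factor $\frac{Nd_N}{|S_\star|}r(x_n,x_{n+1})$ appearing in the paper's own computation via \eqref{muneq1} and \eqref{munprop}. Your explicit treatment of the degenerate case $\mathfrak{n}(i)=1$ (empty infimum in \eqref{H1}) is a small point the paper leaves implicit, and is handled correctly.
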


\begin{proof}
The numerator in \eqref{H1} can be dealt with using Proposition \ref{UBspl};
\begin{equation}
\mathrm{Cap}_{N}(\mathcal{E}_{N}^{(2)}(i),\,\mathcal{E}_{N}^{\star}\setminus\mathcal{E}_{N}^{(2)}(i))=\frac{d_{N}^{2}}{N}\cdot O(1).\label{H1propeq}
\end{equation}
For the denominator in \eqref{H1}, fix $\eta,\,\zeta\in\mathcal{E}_{N}^{(2)}(i)$
and write $\eta=\xi^{x}$ and $\zeta=\xi^{y}$ with $x,\,y\in S_{i}^{(2)}$.
By the definition of $S_{i}^{(2)}$, there exist $x=x_{0},\,x_{1},\,\dots,\,x_{t}=y$
in $S_{i}^{(2)}$ so that $t\le|S_{i}^{(2)}|$ and
\[
r(x_{n},\,x_{n+1})=r(x_{n+1},\,x_{n})>0
\]
for all $0\le n\le t-1$. Take any $F:\mathcal{H}_{N}\to\mathbb{R}$
with $F(\eta)=1\text{ and }F(\zeta)=0$. Recalling Definition \ref{tubedef},
and by reversibility, we calculate $D_{N}(F)$ by 
\[
\begin{aligned} & \frac{1}{2}\sum_{\eta\in\mathcal{H}_{N}}\sum_{a,\,b\in S}\mu_{N}(\eta)\eta_{a}(d_{N}+\eta_{b})r(a,\,b)\{F(\sigma^{a,\,b}\eta)-F(\eta)\}^{2}\\
 & \ge\sum_{n=0}^{t-1}\sum_{j=1}^{N}\mu_{N}(\xi_{j}^{x_{n}x_{n+1}})j(d_{N}+N-j)r(x_{n},\,x_{n+1})\{F(\xi_{j}^{x_{n},x_{n+1}})-F(\xi_{j-1}^{x_{n},x_{n+1}})\}^{2}.
\end{aligned}
\]
By Proposition \ref{mun} and \eqref{munprop}, the last line equals
\[
(1+o(1))\sum_{n=0}^{t-1}\frac{Nd_{N}}{|S_{\star}|}r(x_{n},\,x_{n+1})\sum_{j=1}^{N}\{F(\xi_{j}^{x_{n}x_{n+1}})-F(\xi_{j-1}^{x_{n}x_{n+1}})\}^{2}.
\]
By the Cauchy--Schwarz inequality on $1\le j\le N$, the above is
bounded from below by 
\[
(1+o(1))\sum_{n=0}^{t-1}\frac{d_{N}}{|S_{\star}|}r(x_{n},\,x_{n+1})\{F(\xi^{x_{n}})-F(\xi^{x_{n+1}})\}^{2}.
\]
Using the Cauchy--Schwarz inequality once more on $0\le n\le t-1$,
we obtain the following lower bound for $D_{N}(F)$: 
\[
(1+o(1))\frac{d_{N}}{|S_{\star}|}\frac{\{F(\eta)-F(\zeta)\}^{2}}{\sum_{n=0}^{t-1}\frac{1}{r(x_{n},\,x_{n+1})}}\ge(1+o(1))\frac{d_{N}}{|S_{\star}|}\frac{\min\{r(u,\,v)>0:u,\,v\in S_{i}^{(2)}\}}{|S_{i}^{(2)}|}.
\]
As $F$ was arbitrary, by the Dirichlet principle given in Theorem
\ref{Diri-Thom}-(1), we have 
\begin{equation}
\mathrm{Cap}_{N}(\{\eta\},\,\{\zeta\})\ge(1+o(1))\frac{d_{N}}{|S_{\star}|}\frac{\min\{r(u,\,v)>0:u,\,v\in S_{i}^{(2)}\}}{|S_{i}^{(2)}|}.\label{H1propeq2}
\end{equation}
Therefore, by \eqref{H1propeq} and \eqref{H1propeq2}, we obtain
\[
\limsup_{N\to\infty}\frac{\mathrm{Cap}_{N}(\mathcal{E}_{N}^{(2)}(i),\,\mathcal{E}_{N}^{\star}\setminus\mathcal{E}_{N}^{(2)}(i))}{\inf_{\eta,\,\zeta\in\mathcal{E}_{N}^{(2)}(i)}\mathrm{Cap}_{N}(\{\eta\},\,\{\zeta\})}\le C\limsup_{N\to\infty}\frac{d_{N}^{2}/N}{d_{N}}=C\limsup_{N\to\infty}\frac{d_{N}}{N}=0.
\]
The last formula concludes the proof of Proposition \ref{H1prop}.
\end{proof}

\section{\label{s-PMT}Proof of the Main Theorem}

Now, we are in position to prove the main theorem given in Theorem
\ref{main}. First, we provide sharp asymptotics for the transition
rate of the trace process $\eta_{N}^{\star}(\cdot)$. 
\begin{prop}[Transition rates of the trace process]
\label{jr} Suppose that $d_{N}$ decays subexponentially. Then,
for $i,\,j\in\llbracket1,\,\kappa_{\star}\rrbracket$, 
\[
\lim_{N\to\infty}\frac{N}{d_{N}^{2}}\mathbf{r}_{N}^{\star}(i,\,j)=\frac{1}{|S_{i}^{(2)}|\mathfrak{R}_{i,\,j}}.
\]
\end{prop}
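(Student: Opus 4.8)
The plan is to combine the Beltr\'an--Landim formula \eqref{B-L lem} with the sharp capacity asymptotics established in Propositions \ref{UB} and \ref{LB}. Putting those two propositions together, for every non-trivial partition $\{A,B\}$ of $\llbracket 1,\kappa_{\star}\rrbracket$ the limit
\[
\lim_{N\to\infty}\frac{N}{d_{N}^{2}}\mathrm{Cap}_{N}\!\left(\mathcal{E}_{N}^{(2)}(A),\mathcal{E}_{N}^{(2)}(B)\right)=\frac{1}{|S_{\star}|}\sum_{i'\in A}\sum_{j'\in B}\frac{1}{\mathfrak{R}_{i',j'}}
\]
exists (the $\limsup$ bound from Proposition \ref{UB} and the $\liminf$ bound from Proposition \ref{LB} coincide, so the full limit is legitimate), with the convention $1/\mathfrak{R}_{i',j'}=0$ when $\mathfrak{R}_{i',j'}=\infty$. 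First I would record the three instances of this identity corresponding to the three capacities appearing on the right-hand side of \eqref{B-L lem}: the partition $\{i\}\cup(\llbracket 1,\kappa_{\star}\rrbracket\setminus\{i\})$, the partition $\{j\}\cup(\llbracket 1,\kappa_{\star}\rrbracket\setminus\{j\})$, and the partition $\{i,j\}\cup(\llbracket 1,\kappa_{\star}\rrbracket\setminus\{i,j\})$, noting that $\mathcal{E}_{N}^{\star}\setminus\mathcal{E}_{N}^{(2)}(i)=\mathcal{E}_{N}^{(2)}(\llbracket 1,\kappa_{\star}\rrbracket\setminus\{i\})$ and similarly for the other two.

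Substituting these three limits into \eqref{B-L lem} after multiplying by $N/d_{N}^{2}$, I would split $\sum_{k\ne i}\tfrac{1}{\mathfrak{R}_{i,k}}=\tfrac{1}{\mathfrak{R}_{i,j}}+\sum_{k\notin\{i,j\}}\tfrac{1}{\mathfrak{R}_{i,k}}$ and likewise for $j$; the capacity associated with the partition $\{i,j\}\cup(\cdots)$ then cancels all the cross terms $\sum_{k\notin\{i,j\}}(\cdots)$, and using $\mathfrak{R}_{i,j}=\mathfrak{R}_{j,i}$ the bracket collapses to $2/(|S_{\star}|\mathfrak{R}_{i,j})$. This yields
\[
\lim_{N\to\infty}\frac{N}{d_{N}^{2}}\,\mu_{N}\!\left(\mathcal{E}_{N}^{(2)}(i)\right)\mathbf{r}_{N}^{\star}(i,j)=\frac{1}{|S_{\star}|\,\mathfrak{R}_{i,j}}.
\]
Finally I would divide by $\mu_{N}(\mathcal{E}_{N}^{(2)}(i))=\sum_{n=1}^{\mathfrak{n}(i)}\mu_{N}(\mathcal{E}_{N}^{x_{i,n}})$, which by Proposition \ref{conden} tends to $\mathfrak{n}(i)/|S_{\star}|=|S_{i}^{(2)}|/|S_{\star}|$, to obtain $\lim_{N\to\infty}\tfrac{N}{d_{N}^{2}}\mathbf{r}_{N}^{\star}(i,j)=\tfrac{1}{|S_{i}^{(2)}|\mathfrak{R}_{i,j}}$. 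The diagonal case $i=j$ is immediate since $\mathbf{r}_{N}^{\star}(i,i)=0$ by definition.

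There is no serious obstacle in this argument: all of the analytic work has already been carried out in the capacity estimates of Sections \ref{s-UBspl}--\ref{s-LB}, and what remains is an algebraic manipulation of \eqref{B-L lem}. The only points that warrant a little care are (i) keeping the convention $1/\mathfrak{R}_{i,j}=0$ for $\mathfrak{R}_{i,j}=\infty$ consistent throughout, so that the telescoping of the three capacities is valid even when some graph distances exceed $2$, and (ii) invoking the existence of each of the three capacity limits before performing the cancellation, which is justified precisely because Propositions \ref{UB} and \ref{LB} give matching upper and lower bounds.
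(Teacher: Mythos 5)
Your proposal is correct and follows essentially the same route as the paper: both combine the matching upper and lower capacity bounds (Propositions \ref{UB} and \ref{LB}) with the Beltr\'an--Landim identity \eqref{B-L lem}, perform the same cancellation of the cross terms $\sum_{k\notin\{i,j\}}$ using the symmetry $\mathfrak{R}_{i,j}=\mathfrak{R}_{j,i}$, and divide by $\mu_{N}(\mathcal{E}_{N}^{(2)}(i))\to|S_{i}^{(2)}|/|S_{\star}|$ from Proposition \ref{conden}. Your extra remarks on the convention $1/\mathfrak{R}_{i,j}=0$ and on the existence of the three capacity limits are consistent with, and slightly more explicit than, the paper's argument.
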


\begin{proof}
By Proposition \ref{conden}, $\lim_{N\to\infty}\mu_{N}(\mathcal{E}_{N}^{(2)}(i))=|S_{i}^{(2)}|/|S_{\star}|$
for each $i\in\llbracket1,\,\kappa_{\star}\rrbracket$. Hence, by
Propositions \ref{UBspl}, \ref{LBspl}, and \eqref{B-L lem}, we
have
\[
\begin{aligned}\frac{|S_{i}^{(2)}|}{|S_{\star}|}\mathbf{r}_{N}^{\star}(i,\,j) & =\frac{d_{N}^{2}}{2|S_{\star}|N}\Big[\sum_{k:\,k\ne i}\frac{1}{\mathfrak{R}_{i,\,k}}+\sum_{k:\,k\ne j}\frac{1}{\mathfrak{R}_{j,\,k}}-\sum_{k:\,k\ne i,\,j}\Big(\frac{1}{\mathfrak{R}_{i,\,k}}+\frac{1}{\mathfrak{R}_{j,\,k}}\Big)+o(1)\Big]\\
 & =\frac{d_{N}^{2}}{2|S_{\star}|N}\Big[\frac{2}{\mathfrak{R}_{i,\,j}}+o(1)\Big]=\frac{d_{N}^{2}}{|S_{\star}|N}\Big[\frac{1}{\mathfrak{R}_{i,\,j}}+o(1)\Big].
\end{aligned}
\]
Multiplying $(|S_{\star}|N)/(|S_{i}^{(2)}|d_{N}^{2})$ on both sides,
we obtain the desired result. 
\end{proof}
Finally, we provide the proof of Theorem \ref{main}.
\begin{proof}[Proof of Theorem \ref{main}]
 By Propositions \ref{H1prop} and \ref{jr}, the conditions \eqref{H0}
and \eqref{H1} are verified for 
\[
a(i,\,j)=\frac{1}{|S_{i}^{(2)}|\mathfrak{R}_{i,\,j}}\text{ for }i,\,j\in\llbracket1,\,\kappa_{\star}\rrbracket\text{ and }\theta_{N}=\theta_{N,\,2}=\frac{d_{N}^{2}}{N}.
\]
Therefore, Proposition \ref{B-L conv} establishes the thermalization
result stated in (1) and the convergence result stated in (2).

For the last statement in (3), we first show that 
\begin{equation}
\lim_{N\to\infty}\sup_{i\in\llbracket1,\,\kappa_{\star}\rrbracket,\,n\in\llbracket1,\,\mathfrak{n}(i)\rrbracket}\mathbb{E}_{\xi^{x_{i,\,n}}}\Big[\int_{0}^{t}\mathbbm{1}\{\eta_{N}(\theta_{N,\,2}s)\notin\mathcal{E}_{N}^{\star}\}ds\Big]=0.\label{mainpfeq}
\end{equation}
To this end, fix $i$ and $n$. Note that
\begin{equation}
\mathbb{P}_{\xi^{x_{i,\,n}}}\big[\eta_{N}(\theta_{N,\,2}s)\notin\mathcal{E}_{N}^{\star}\big]\le\frac{1}{\mu_{N}(\mathcal{E}_{N}^{x_{i,\,n}})}\mathbb{P}_{\mu_{N}}\big[\eta_{N}(\theta_{N,\,2}s)\notin\mathcal{E}_{N}^{\star}\big]=\frac{\mu_{N}(\mathcal{H}_{N}\setminus\mathcal{E}_{N}^{\star})}{\mu_{N}(\mathcal{E}_{N}^{x_{i,\,n}})}.\label{mainpfeq2}
\end{equation}
Here, $\mathbb{P}_{\mu_{N}}$ is the law of the process whose initial
distribution is $\mu_{N}$. The identity holds, as $\mu_{N}$ is the
invariant distribution. Therefore,
\[
\begin{aligned}\mathbb{E}_{\xi^{x_{i,\,n}}}\Big[\int_{0}^{t}\mathbbm{1}\{\eta_{N}(\theta_{N,\,2}s)\notin\mathcal{E}_{N}^{\star}\}ds\Big] & =\int_{0}^{t}\mathbb{P}_{\xi^{x_{i,\,n}}}\big[\eta_{N}(\theta_{N,\,2}s)\notin\mathcal{E}_{N}^{\star}\big]ds\\
 & \le\int_{0}^{t}\frac{\mu_{N}(\mathcal{H}_{N}\setminus\mathcal{E}_{N}^{\star})}{\mu_{N}(\mathcal{E}_{N}^{x_{i,\,n}})}ds=t\cdot\frac{\mu_{N}(\mathcal{H}_{N}\setminus\mathcal{E}_{N}^{\star})}{\mu_{N}(\mathcal{E}_{N}^{x_{i,\,n}})},
\end{aligned}
\]
which vanishes uniformly in the limit $N\to\infty$ by Proposition
\ref{conden}. This proves \eqref{mainpfeq}.

It remains to show that 
\begin{equation}
\lim_{N\to\infty}\sup_{i\in\llbracket1,\,\kappa_{\star}\rrbracket,\,n\in\llbracket1,\,\mathfrak{n}(i)\rrbracket}\mathbb{E}_{\xi^{x_{i,\,n}}}\Big[\int_{0}^{t}\mathbbm{1}\{\eta_{N}(\theta_{N,\,2}s)\in\mathcal{E}_{N}^{\star}\setminus\mathcal{E}_{N}(S_{\hat{i}}^{(3)})\}ds\Big]=0.\label{mainpfeq3}
\end{equation}
We apply Proposition \ref{ConvFDD}. Because the first two conditions
are already proven, it suffices to prove \eqref{ConvFDDeq}. This
is clear from \eqref{mainpfeq2}. Hence, we have the convergence of
finite-dimensional marginal distributions. Therefore, for each pair
$(i,\,n)$ and $s\in[0,\,t]$, 
\[
\lim_{N\to\infty}\mathbb{P}_{\xi^{x_{i,\,n}}}\big[\eta_{N}(\theta_{N,\,2}s)\in\mathcal{E}_{N}^{\star}\setminus\mathcal{E}_{N}(S_{\hat{i}}^{(3)})\big]=\mathbf{P}_{i}\big[X_{\textup{second}}(s)\in S_{\star}\setminus S_{\hat{i}}^{(3)}\big]=0.
\]
The last equality holds, as starting at $i$, $X_{\textup{second}}(\cdot)$
never visits $S_{\star}\setminus S_{\hat{i}}^{(3)}$ by \eqref{maineq3}.
Because $S_{\star}$ is finite, we have \eqref{mainpfeq3}. Finally,
\eqref{mainpfeq} and \eqref{mainpfeq3} conclude the proof of Theorem
\ref{main}. 
\end{proof}
\begin{acknowledgement*}
S. Kim received support from the National Research Foundation of Korea
(NRF) grant funded by the Korea government (MSIT) (No. 2018R1C1B6006896)
and NRF-2019-Fostering Core Leaders of the Future Basic Science Program/Global
Ph.D. Fellowship Program.
\end{acknowledgement*}

\end{document}